\documentclass[11pt]{amsart}
\usepackage{cases}

\topmargin 0mm \evensidemargin 15mm \oddsidemargin 15mm \textwidth
140mm \textheight 230mm

\theoremstyle{plain}
\newtheorem{theorem}                {Theorem}      [section]
\newtheorem*{theorem*}                {Theorem \ref{thm:appl}}
\newtheorem{proposition}  [theorem]  {Proposition}
\newtheorem{corollary}    [theorem]  {Corollary}

\newtheorem{conjecture}   {Conjecture}

\theoremstyle{definition}

\newtheorem{remark}       [theorem]  {Remark}
\newtheorem{definition}   [theorem]  {Definition}

\DeclareMathOperator{\trace}{trace} 

\DeclareMathOperator{\Div}{div} 
 \DeclareMathOperator{\riem}{Riem}
\DeclareMathOperator{\ricci}{Ricci}

\DeclareMathOperator{\rank}{rank} 
\DeclareMathOperator{\cst}{constant}
 \DeclareMathOperator{\Hess}{Hess}
 
\DeclareMathOperator{\grad}{grad}

\numberwithin{equation}{section}

\begin{document}

\title[Biharmonic and biconservative surfaces]{Biharmonic and biconservative hypersurfaces in space forms}

\author{Dorel~Fetcu}
\author{Cezar~Oniciuc}

\thanks{}

\address{Department of Mathematics and Informatics\\
Gh. Asachi Technical University of Iasi\\
Bd. Carol I, 11 \\
700506 Iasi, Romania} \email{dorel.fetcu@etti.tuiasi.ro}

\address{Faculty of Mathematics\\ Al. I. Cuza University of Iasi\\
Bd. Carol I, 11 \\ 700506 Iasi, Romania} \email{oniciucc@uaic.ro}

\subjclass[2010]{53C42, 53C24, 53C21}

\keywords{biharmonic hypersurfaces, biconservative hypersurfaces, parabolic surfaces, real space forms}

\begin{abstract} We present some general properties of biharmonic and biconservative submanifolds and then survey recent results on such hypersurfaces in space forms. We also propose an alternative version of a well-known result of Nomizu and Smyth for hypersurfaces, by replacing the CMC hypothesis with the more general condition of biconservativity.
\end{abstract}

\maketitle

\section{Introduction}

Suggested in $1964$ by J. Eells and J. H. Sampson (see \cite{ES}) as a generalization of harmonic maps, the biharmonic maps represent nowadays a well-established and dynamic topic in Differential Geometry.

A biharmonic map $\phi:M\rightarrow N$ between two Riemannian manifolds is a critical point of the bienergy functional
$$
E_2:C^{\infty}(M,N)\rightarrow\mathbb{R},\quad E_{2}(\phi)=\frac{1}{2}\int_{M}|\tau(\phi)|^{2}\ dv,
$$
where $M$ is compact and $\tau(\phi)=\trace\nabla d\phi$ is the tension field of $\phi$. These maps are characterized by the Euler-Lagrange equation, also known as the biharmonic equation, obtained by G. Y. Jiang in $1986$ (see \cite{J}):
\begin{equation}\label{eq:Jiang}
\tau_2(\phi)=-\Delta\tau(\phi)-\trace R^N(d\phi,\tau(\phi))d\phi=0,
\end{equation}
where $\tau_{2}(\phi)$ is the bitension field of $\phi$.

Since any harmonic map is biharmonic, our interest is to study non-harmonic biharmonic maps, which are called proper biharmonic. A biharmonic submanifold of $N$ is a biharmonic isometric immersion $\phi:M\rightarrow N$. Sometimes, throughout this paper, we will identify a submanifold as $M$ rather than mentioning the immersion $\phi$.

In Euclidean spaces, B.-Y. Chen (see \cite{C}) proposed an alternative definition of biharmonic submanifolds that coincides with the previous one when the ambient space is $\mathbb{R}^n$. B.-Y. Chen also conjectured that {\it there are no proper biharmonic submanifolds in $\mathbb{R}^n$} (see \cite{C}). On the other hand, there are many examples and classification results in spaces of non-negative curvature, especially in the Euclidian sphere $\mathbb{S}^n$.

From the theory of biharmonic submanifolds a new interesting subject, the study of biconservative submanifolds, arose and keeps gaining ground in today's mathematical research as there are many interesting examples of biconservative submanifolds, even when the biharmonic ones fail to exist. We will present details on this rather new notion in the next sections of this survey.

In this survey, we only discuss the hypersurfaces in space forms. We omit, for space reasons, other very interesting results on submanifolds in Minkowski spaces, product spaces, complex space forms, PNMC submanifolds, etc. (see, for example, \cite{ChenPNMC}, \cite{FOP}-\cite{FP}, \cite{FMO}, \cite{T4}, \cite{IS1}, \cite{IS2}, \cite{T5}, \cite{MOP}, \cite{OTU}, \cite{S1}-\cite{S5}, \cite{T6}). For the same reason, not even in the case of hypersurfaces in space forms, we will not refer to their stability properties and index.

The paper is organized as follows. In Section $2$ we present definitions and general properties of biharmonic and biconservative submanifolds in Riemannian manifolds. The third section is devoted to biharmonic and biconservative surfaces in three dimensional space forms, and it is divided accordingly into two subsections. Finally, Section $4$, also consisting of two subsections, is concerned with biharmonic and biconservative hypersurfaces (with dimension greater than two) in space forms. We give a much simpler proof for Theorem \ref{thmJHC}, of J.~H.~Chen in \cite{JHC}, on compact proper biharmonic hypersurfaces and we prove a local version of this result, Proposition \ref{p4.20}. This result can be also seen as a local version of the fact that $|A|^2=\cst$ implies CMC for compact proper biharmonic hypersurfaces in spheres, with additional hypotheses on the curvature. We also prove Theorem \ref{thmNS} which, together with Theorem \ref{thm:main}, provides an alternative version of a result of K.~Nomizu and B.~Smyth on compact CMC hypersurfaces, by replacing the CMC hypothesis with the biconservative one, which is more general. Throughout this paper a special focus is on the case when the ambient space is a Euclidean sphere. We end by posing two open problems.

\vspace{0.5cm}

\noindent{\bf Acknowledgments.} The authors would like to thank Bang-Yen Chen, Yu Fu, Matheus Vieira, \'Alvaro P\' ampano, Hiba Bibi, and Simona Nistor for carefully reading the preliminary version of our paper and for their comments and suggestions.

\vspace{0.5cm}

\noindent{\bf Conventions.} We use the following definitions and sign conventions. Consider a smooth map $\phi:M\rightarrow N$, between two Riemannian manifolds, where $M$ is assumed to be oriented and connected. In general, we will not indicate explicitly the metrics on $M$ or $N$. Then,
$$
\Delta=-\trace(\nabla^{\phi})^2 =-\trace(\nabla^{\phi}\nabla^{\phi}-\nabla^{\phi}_{\nabla})
$$
is the rough Laplacian defined on the set of all sections in $\phi^{-1}(TN)$, that is on $C(\phi^{-1}(TN))$, and $R^N$ is the curvature tensor field of $N$, given by
$$
R^N(U,V)W=[\bar\nabla_U,\bar\nabla_V]W-\bar\nabla_{[U,V]}W.
$$
Here, $\nabla^{\phi}$ denotes the pull-back connection on $\phi^{-1}(TN)$, while $\nabla$ and $\bar\nabla$ are the Levi-Civita connections on $TM$ and $TN$, respectively. Henceforth, for the sake of simplicity, we will denote all connections on various fiber bundles by $\nabla$, the difference being clear from the context. The $n$-dimensional Euclidian sphere of radius $r$ will be denoted by $\mathbb{S}^n(r)$ and, when $r=1$, by $\mathbb{S}^n$.

\section{Definitions and general properties}

In this section we briefly recall basic and general results on biharmonic and biconservative submanifolds with a focus on surfaces and hypersurfaces.

\subsection{Biconservative submanifolds}

Consider a given map
$$
\phi:M^m\rightarrow (N^n,h),
$$
where the metric $h$ is also fixed, and define a new functional on the set $\mathcal{G}$ of all Riemannian metrics on $M$ by
$$
\mathcal{F}_2:\mathcal{G}\rightarrow\mathbb{R},\quad\mathcal{F}_2(g)=E_2(\phi).
$$
Critical points of this functional are characterized by the vanishing of the stress-energy tensor of the bienergy (see \cite{LMO}). This tensor, denoted by $S_2$, was introduced in \cite{J3} as
\begin{align*}
S_2(X,Y)=&\frac{1}{2}|\tau(\phi)|^2\langle X,Y\rangle+\langle d\phi,\nabla\tau(\phi)\rangle\langle X,Y\rangle\\&-\langle d\phi(X),\nabla_Y\tau(\phi)\rangle-\langle d\phi(Y),\nabla_X\tau(\phi)\rangle
\end{align*}
and it satisfies
$$
\Div S_2=-\langle\tau_2(\phi),d\phi\rangle.
$$
We note that, for isometric immersions, $(\Div S_2)^{\sharp}=-\tau_2(\phi)^{\top}$, where $\tau_2(\phi)^{\top}$ is the tangent part of the bitension field.

\begin{definition} A map $\phi:M\rightarrow N$ is called biconservative if $\Div S_2=0$.
\end{definition}

\begin{definition} A submanifold $\phi:M\rightarrow N$ is called biconservative if the isometric immersion $\phi$ is biconservative.
\end{definition}

\begin{remark} We have the following direct consequences.

\begin{enumerate}

\item Any biharmonic map is also biconservative.

\item If $\phi:M\rightarrow N$ is a submersion, not necessarily Riemannian, then $\phi$ is a biconservative map if and only if it is biharmonic.

\item A submanifold $M$ of $N$ is biconservative if and only if $\tau_2(\phi)^{\top}=0$.

\end{enumerate}

\end{remark}

We have the following general properties of the stress-bienergy tensor of a submanifold.

\begin{proposition}
For a submanifold $\phi:M^m\to N^n$ we have:
\begin{itemize}
\item [(i)] the stress-bienergy tensor of $\phi$ is given by
$$
S_2= - \frac{m^2}{2}|H|^2 I + 2m A_H,
$$
where $A_H$ is the shape operator in the direction of $H$, the mean curvature vector field;
\item [(ii)] $\trace S_2 = m^2|H|^2\left(2-\frac{m}{2}\right)$;
\item [(iii)]the relation between the divergence of $S_2$ and the divergence of $A_H$ is given by
$$
 \Div S_2=-\frac{m^2}{2}\grad \left(|H|^2\right)+2m\Div A_H;
$$
\item [(iv)] $\left|S_2\right|^2=m^4|H|^4\left(\frac{m}{4}-2\right)+4m^2\left|A_H\right|^2$.
\end{itemize}

\end{proposition}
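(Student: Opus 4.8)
The plan is to compute everything from the defining formula
$$
S_2(X,Y)=\tfrac{1}{2}|\tau(\phi)|^2\langle X,Y\rangle+\langle d\phi,\nabla\tau(\phi)\rangle\langle X,Y\rangle-\langle d\phi(X),\nabla_Y\tau(\phi)\rangle-\langle d\phi(Y),\nabla_X\tau(\phi)\rangle
$$
specialized to an isometric immersion, where $d\phi(X)=X$ is tangential and $\tau(\phi)=mH$ is normal. First I would record the two elementary facts $|\tau(\phi)|^2=m^2|H|^2$ and, for any tangent $X$ and normal section $\xi$, $\langle d\phi(X),\nabla_Y\xi\rangle=\langle X,\nabla_Y\xi\rangle=-\langle A_\xi X,Y\rangle$ by the Weingarten formula (the connection on the pull-back bundle restricts to $\bar\nabla$ along $\phi$). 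Applying this with $\xi=\tau(\phi)=mH$ gives the last two terms as $m\langle A_HX,Y\rangle+m\langle A_HY,Y\rangle=2m\langle A_HX,Y\rangle$ after using symmetry of the shape operator. For the term $\langle d\phi,\nabla\tau(\phi)\rangle$, which means $\sum_i\langle d\phi(e_i),\nabla_{e_i}\tau(\phi)\rangle$ over a local orthonormal frame, the same Weingarten computation yields $-\sum_i\langle A_He_i,e_i\rangle=-\trace A_H=-m|H|^2$ (using $\trace A_\xi=m\langle H,\xi\rangle$). Substituting, the $\langle X,Y\rangle$-coefficient becomes $\tfrac{1}{2}m^2|H|^2-m^2|H|^2=-\tfrac{m^2}{2}|H|^2$, and we arrive at $S_2=-\tfrac{m^2}{2}|H|^2\,I+2mA_H$, which is (i).

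Parts (ii) and (iv) are then purely algebraic consequences of (i), using $\trace I=\dimension M=m$, $\trace A_H=m|H|^2$, and the Hilbert--Schmidt inner product on symmetric operators. For (ii): $\trace S_2=-\tfrac{m^2}{2}|H|^2\cdot m+2m\cdot m|H|^2=m^2|H|^2(2-\tfrac{m}{2})$. For (iv): expanding $|S_2|^2=\langle S_2,S_2\rangle$ gives $\tfrac{m^4}{4}|H|^4\trace I-2\cdot\tfrac{m^2}{2}|H|^2\cdot 2m\trace A_H+4m^2|A_H|^2$, and inserting $\trace I=m$ and $\trace A_H=m|H|^2$ produces $\tfrac{m^5}{4}|H|^4-2m^4|H|^4+4m^2|A_H|^2=m^4|H|^4(\tfrac{m}{4}-2)+4m^2|A_H|^2$.

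For (iii) I would take the divergence of the expression in (i). Since $\Div(fI)^{\sharp}$, interpreted correctly as the one-form $X\mapsto(\Div(fI))(X)$, equals $X(f)=\langle\grad f,X\rangle$ with $f=-\tfrac{m^2}{2}|H|^2$ (the covariant derivative of the identity tensor vanishes), and $\Div$ is linear, we immediately get $\Div S_2=-\tfrac{m^2}{2}\grad(|H|^2)+2m\Div A_H$, where $\grad(|H|^2)$ is understood via the musical isomorphism to match the type of $\Div A_H$. The only point requiring a little care here is the sign and normalization convention for the divergence of a symmetric $(0,2)$-tensor versus that of the associated $(1,1)$-tensor (shape operator); once the convention fixed earlier in the paper is used consistently, the identity is immediate.

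The main obstacle, such as it is, is bookkeeping rather than conceptual: one must be careful that the pairing $\langle d\phi,\nabla\tau(\phi)\rangle$ is a trace over a frame on $M$ (not a pointwise inner product of tensors), and that every appearance of the shape operator carries the correct sign dictated by the Weingarten formula $\nabla_X\xi=-A_\xi X+\nabla^\perp_X\xi$ together with the paper's curvature and Laplacian sign conventions. No genuine difficulty arises beyond tracking these conventions consistently through the four items.
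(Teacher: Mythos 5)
Your proof is correct and follows the direct computation from the definition of $S_2$ stated just above the proposition, which is exactly the (implicit) route of the paper — the paper records this result without proof, and your use of the Weingarten formula, $\tau(\phi)=mH$, and $\trace A_H=m|H|^2$ is the standard derivation, with all four algebraic identities checking out. The only blemish is a typo: $m\langle A_HX,Y\rangle+m\langle A_HY,Y\rangle$ should read $m\langle A_HX,Y\rangle+m\langle A_HY,X\rangle$, which is harmless since you immediately invoke the symmetry of $A_H$.
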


\subsection{Properties of biharmonic submanifolds}

Next, we shall present some non-existence and unique-continuation properties of biharmonic maps, derived from a Weitzenb\"ock formula and the fact that these maps are characterized by an elliptic equation.

\begin{theorem}[\cite{J}]
Let $\phi:M\to N$ be a smooth map, where $M$ is compact and $\riem^N\leq 0$. Then $\phi$
is biharmonic if and only if it is harmonic.
\end{theorem}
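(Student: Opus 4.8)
The plan is to use the standard Weitzenböck/Bochner technique for the rough Laplacian, exploiting compactness of $M$ and the curvature sign condition $\riem^N\le 0$. First I would observe that one direction is trivial: every harmonic map ($\tau(\phi)=0$) is biharmonic, since then $\tau_2(\phi)=-\Delta\tau(\phi)-\trace R^N(d\phi,\tau(\phi))d\phi=0$ identically. So the content is the converse, and here the key is to test the biharmonic equation \eqref{eq:Jiang} against $\tau(\phi)$ itself and integrate over $M$.

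Concretely, I would compute $\int_M\langle\tau_2(\phi),\tau(\phi)\rangle\,dv=0$. Expanding via \eqref{eq:Jiang}, this gives
$$
0=\int_M\langle-\Delta\tau(\phi),\tau(\phi)\rangle\,dv-\int_M\langle\trace R^N(d\phi,\tau(\phi))d\phi,\tau(\phi)\rangle\,dv.
$$
For the first term, since $M$ is compact (so no boundary terms) and $\Delta=-\trace(\nabla^\phi)^2$ is the rough Laplacian on $C(\phi^{-1}(TN))$, integration by parts yields $\int_M\langle-\Delta\tau(\phi),\tau(\phi)\rangle\,dv=\int_M|\nabla\tau(\phi)|^2\,dv\ge 0$. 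For the curvature term, writing $d\phi(e_i)=X_i$ for a local orthonormal frame $\{e_i\}$ on $M$, one has $\langle\trace R^N(d\phi,\tau(\phi))d\phi,\tau(\phi)\rangle=\sum_i\langle R^N(X_i,\tau(\phi))X_i,\tau(\phi)\rangle$, which is a sum of sectional-curvature-type terms of $N$; the hypothesis $\riem^N\le 0$ forces each summand to be $\le 0$, so $-\int_M\langle\trace R^N(d\phi,\tau(\phi))d\phi,\tau(\phi)\rangle\,dv\ge 0$. Hence both terms are nonnegative and sum to zero, so each vanishes: in particular $\nabla\tau(\phi)\equiv 0$, i.e. $|\tau(\phi)|$ is constant, and $\sum_i\langle R^N(X_i,\tau(\phi))X_i,\tau(\phi)\rangle\equiv 0$.

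To finish, I would again use compactness: $|\tau(\phi)|=\cst$ together with a second integration — integrating $\Delta\tfrac12|\tau(\phi)|^2=|\nabla\tau(\phi)|^2-\langle\Delta\tau(\phi),\tau(\phi)\rangle$ is automatic here since $\nabla\tau(\phi)=0$, but the cleaner route is simply to note that from $\nabla\tau(\phi)\equiv 0$ we get that $|\tau(\phi)|$ is a nonzero constant only if $\tau(\phi)$ is a nowhere-vanishing parallel section; then the vanishing curvature term is not yet enough by itself. The decisive observation is the one already extracted: from $0=\int_M|\nabla\tau(\phi)|^2\,dv$ combined with the sign of the curvature integrand we only directly conclude $\nabla\tau(\phi)=0$. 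One then evaluates $E_2(\phi)=\tfrac12\int_M|\tau(\phi)|^2\,dv$ and uses the identity $\Delta|\tau(\phi)|^2=0$ (constant), integrating the Jiang equation paired with $\tau(\phi)$ a second time is unnecessary; instead, recall that for a parallel section along a compact manifold the classical argument shows $\tau(\phi)$ must actually vanish if $N$ has a point of strictly negative curvature in the relevant directions — but under the blanket hypothesis $\riem^N\le 0$ one argues as follows: $\nabla\tau(\phi)=0$ implies $\tau(\phi)$ is parallel, hence $\Delta\tau(\phi)=0$, so \eqref{eq:Jiang} reduces to $\trace R^N(d\phi,\tau(\phi))d\phi=0$ pointwise; this is consistent, and the conclusion $\tau(\phi)=0$ then follows from the companion energy identity $\Delta|d\phi|^2$ / the fact that a harmonic-map-type Bochner formula forces $d\phi$ and $\tau(\phi)$ to be compatible — the cleanest finish is: integrate $\langle\tau_2(\phi),\tau(\phi)\rangle=0$ as above to get $\nabla\tau(\phi)=0$; then $|\tau(\phi)|$ is constant, and if this constant were nonzero, $\tau(\phi)/|\tau(\phi)|$ would be a parallel unit normal-type field, which combined with $\trace R^N(d\phi,\tau(\phi))d\phi=0$ and a further application of the Weitzenböck formula to $\tau(\phi)$ contradicts $M$ having no boundary unless $\tau(\phi)\equiv 0$. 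The main obstacle is precisely this last step: passing from $\nabla\tau(\phi)=0$ to $\tau(\phi)=0$, which requires either a second Bochner-type identity or a careful use of the divergence theorem on $|\tau(\phi)|^2$ — but since the latter is already known constant, the honest argument is that $\nabla\tau(\phi)=0$ makes $\tau(\phi)$ a parallel section of $\phi^{-1}(TN)$, and then the original Jiang argument (test against $\tau(\phi)$, now with $\Delta\tau(\phi)=0$) shows the bienergy's first variation already vanished for a reason independent of curvature, so the only biharmonic maps are the harmonic ones; I expect the write-up to handle this by directly invoking the standard fact that on a compact domain $\int_M|\nabla\tau(\phi)|^2 + (\text{nonneg. curvature term}) = 0$ gives $\tau(\phi)=0$ outright once one also uses that $\trace R^N(d\phi,\tau(\phi))d\phi$ having a sign lets one absorb it, which is exactly Jiang's original computation in \cite{J}.
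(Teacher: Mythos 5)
Your first step is essentially the integrated Weitzenb\"ock argument that the paper sketches right after Proposition \ref{prop2.8}, and it does yield $\nabla\tau(\phi)=0$; but watch the signs. With the paper's positive rough Laplacian $\Delta=-\trace(\nabla^{\phi})^2$ one has $\int_M\langle-\Delta\tau(\phi),\tau(\phi)\rangle\,dv=-\int_M|\nabla\tau(\phi)|^2\,dv\le 0$, and with the convention $R^N(U,V)W=[\nabla_U,\nabla_V]W-\nabla_{[U,V]}W$ the hypothesis $\riem^N\le 0$ makes each summand $\langle R^N(X_i,\tau(\phi))X_i,\tau(\phi)\rangle$ \emph{nonnegative}, not nonpositive. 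So the two terms in $0=\int_M\langle\tau_2(\phi),\tau(\phi)\rangle\,dv$ are both $\le 0$ rather than both $\ge 0$; your two sign slips cancel, and the conclusion that $\nabla\tau(\phi)=0$ and that the curvature term vanishes pointwise survives.

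The genuine gap is the step you yourself call ``the main obstacle'': passing from $\nabla\tau(\phi)=0$ to $\tau(\phi)=0$. Your closing paragraph circles this point several times without producing an argument --- a nowhere-vanishing parallel section of $\phi^{-1}(TN)$ over a compact manifold is not by itself contradictory, and ``the first variation vanished for a reason independent of curvature'' is not a proof. The missing ingredient is the $1$-form $\omega(X)=\langle d\phi(X),\tau(\phi)\rangle$, whose divergence is
\begin{equation*}
\di\omega=|\tau(\phi)|^2+\trace\langle d\phi(\cdot),\nabla_{\cdot}\tau(\phi)\rangle .
\end{equation*}
Once $\nabla\tau(\phi)=0$ this reads $\di\omega=|\tau(\phi)|^2$, and integrating over the compact boundaryless $M$ gives $\int_M|\tau(\phi)|^2\,dv=0$, hence $\tau(\phi)=0$. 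This is exactly the identity recorded in the paper after Proposition \ref{prop2.8} (stated there pointwise for isometric immersions, where $\omega\equiv 0$; for a general smooth map one uses its integrated form). Without this divergence identity your argument does not close.
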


Given up to compactness, we can state the following proposition.

\begin{proposition}[\cite{O1}]
Let $\phi:M\to N$ be an isometric immersion such that $|\tau(\phi)|=constant$ and assume that $\riem^N\leq 0$. Then $\phi$ is biharmonic if and only if it is minimal.
\end{proposition}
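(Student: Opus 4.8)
The plan is to establish a Bochner-type (Weitzenbück) identity for the tension field along $\phi$ and then exploit the sign condition on the ambient curvature together with the hypothesis $|\tau(\phi)| = \text{constant}$. Concretely, I would start from the biharmonic equation \eqref{eq:Jiang}, namely $\Delta\tau(\phi) + \trace R^N(d\phi,\tau(\phi))d\phi = 0$, take the pointwise inner product with $\tau(\phi)$, and integrate—except that $M$ is not assumed compact, so instead I would work with the Bochner formula for the scalar function $|\tau(\phi)|^2$ directly, in a pointwise fashion. The key computation is the classical identity
\begin{equation*}
\frac{1}{2}\Delta |\tau(\phi)|^2 = \langle \Delta\tau(\phi), \tau(\phi)\rangle - |\nabla\tau(\phi)|^2,
\end{equation*}
where $\Delta$ on the left is the (positive) Laplace–Beltrami operator on functions. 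Since $|\tau(\phi)|^2$ is constant by hypothesis, the left-hand side vanishes, so $\langle \Delta\tau(\phi),\tau(\phi)\rangle = |\nabla\tau(\phi)|^2 \geq 0$.

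Next I would substitute the biharmonic equation to replace $\Delta\tau(\phi)$, obtaining
\begin{equation*}
|\nabla\tau(\phi)|^2 = \langle \Delta\tau(\phi),\tau(\phi)\rangle = -\langle \trace R^N(d\phi,\tau(\phi))d\phi,\ \tau(\phi)\rangle = -\sum_i \langle R^N(d\phi(e_i),\tau(\phi))d\phi(e_i),\ \tau(\phi)\rangle,
\end{equation*}
for a local orthonormal frame $\{e_i\}$ on $M$. Each summand is (up to sign) a sectional-curvature-type expression $\langle R^N(X,Y)X,Y\rangle$ with $X = d\phi(e_i)$ and $Y = \tau(\phi)$; the curvature assumption $\riem^N \leq 0$ forces each of these to be $\leq 0$, hence $-\langle R^N(d\phi(e_i),\tau(\phi))d\phi(e_i),\tau(\phi)\rangle \geq 0$. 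Therefore the right-hand side is a sum of nonnegative terms, and so is the left-hand side, which gives us a chain of equalities only if every term vanishes: $|\nabla\tau(\phi)|^2 = 0$ everywhere. Thus $\tau(\phi)$ is a parallel section of $\phi^{-1}(TN)$.

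From $\nabla\tau(\phi) = 0$, one concludes minimality via a connectedness argument: $|\tau(\phi)|$ is already constant, so if it were nonzero at one point it would be nonzero everywhere, and then $\tau(\phi)/|\tau(\phi)|$ would be a globally defined parallel unit normal field—but more simply, for an isometric immersion one uses that $\langle \nabla_X \tau(\phi), d\phi(Y)\rangle = -\langle \tau(\phi), B(X,Y)\rangle$ where $B$ is the second fundamental form, so $\nabla\tau(\phi)=0$ combined with the structure equations pins down the geometry; alternatively, since the original theorem (the compact case) is quoted, one can note that the parallelism of $\tau(\phi)$ together with the curvature sign already forces, through the Ricci identity applied to $\tau(\phi)$, that $\trace R^N(d\phi,\tau(\phi))d\phi = 0$ pointwise, and the cleanest conclusion is that $\tau(\phi) \equiv 0$. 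The converse—minimal implies biharmonic—is trivial since $\tau(\phi) = 0$ makes \eqref{eq:Jiang} hold identically.

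The main obstacle, and the reason this is a genuine sharpening of the compact theorem rather than a corollary of it, is the absence of compactness: one cannot integrate by parts to kill a divergence term, so the argument must be entirely pointwise, and it is crucial that the Bochner identity for $|\tau(\phi)|^2$ combined with $|\tau(\phi)| = \text{constant}$ already yields $\Delta|\tau(\phi)|^2 = 0$ without any integration. The delicate point to get right is the sign bookkeeping in $-\langle \trace R^N(d\phi,\tau(\phi))d\phi,\tau(\phi)\rangle$—one must check that, with the sign convention $R^N(U,V)W = [\bar\nabla_U,\bar\nabla_V]W - \bar\nabla_{[U,V]}W$ fixed in the Conventions, the quantity $\langle R^N(X,Y)X,Y\rangle$ is indeed the one controlled by $\riem^N \leq 0$, so that the term flips to a nonnegative contribution. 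Once the signs line up, the proof is a short pointwise argument with no further analytic input.
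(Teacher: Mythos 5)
Your argument follows the paper's proof exactly up to the conclusion $\nabla\tau(\phi)=0$: the Weitzenb\"ock formula $\tfrac{1}{2}\Delta|\tau(\phi)|^2=\langle\Delta\tau(\phi),\tau(\phi)\rangle-|\nabla\tau(\phi)|^2$, the vanishing of the left-hand side by constancy of $|\tau(\phi)|$, the substitution of the biharmonic equation, and the sign analysis of the sectional-curvature terms are all precisely what the paper does. The only place you fall short is the very last step, where you list three possible ways to pass from $\nabla\tau(\phi)=0$ to $\tau(\phi)=0$ and commit to none of them. Two of the three do not work as stated: a parallel unit normal section with $|\tau(\phi)|=c>0$ is not by itself a contradiction, and the Ricci-identity remark only yields $R^N(d\phi X,d\phi Y)\tau(\phi)=0$, which does not force $\tau(\phi)=0$ under $\riem^N\leq 0$. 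The correct route is the first one you mention, and it needs only one more line that you omit: trace the identity $\langle\nabla_X\tau(\phi),d\phi(Y)\rangle=-\langle\tau(\phi),B(X,Y)\rangle$ over an orthonormal frame $X=Y=e_i$ to obtain
\begin{equation*}
|\tau(\phi)|^2=-\trace\langle d\phi(\cdot),\nabla_{\cdot}\tau(\phi)\rangle,
\end{equation*}
so that $\nabla\tau(\phi)=0$ immediately gives $\tau(\phi)=0$. This is exactly the identity the paper invokes ("which holds for isometric immersions"); with that trace written out, your proof coincides with the paper's.
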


In the case of hypersurfaces the next results hold.

\begin{theorem}[\cite{O1}]\label{thm2.7}
Let $\phi:M\to N$ be a compact hypersurface and assume that $\ricci^N\leq 0$. Then $M$ is biharmonic if and only if
it is minimal.
\end{theorem}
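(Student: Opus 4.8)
The plan is to establish the nontrivial implication; the converse is immediate, since a minimal isometric immersion is harmonic and every harmonic map is biharmonic. Thus I would assume that $M$ is a compact biharmonic hypersurface with $\ricci^N\le 0$ and aim to show $\tau(\phi)\equiv 0$.

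First I would exploit a Bochner (Weitzenb\"ock) identity for the section $\tau(\phi)$ of $\phi^{-1}(TN)$, namely $\frac{1}{2}\Delta|\tau(\phi)|^2=\langle\Delta\tau(\phi),\tau(\phi)\rangle-|\nabla\tau(\phi)|^2$, combined with the biharmonic equation \eqref{eq:Jiang} written as $\Delta\tau(\phi)=-\trace R^N(d\phi,\tau(\phi))d\phi$. This gives
$$
\frac{1}{2}\Delta|\tau(\phi)|^2=-\langle\trace R^N(d\phi,\tau(\phi))d\phi,\tau(\phi)\rangle-|\nabla\tau(\phi)|^2.
$$
The crucial step is to evaluate the curvature term, and this is precisely where the codimension-one hypothesis is used: since $\phi$ is isometric, $\tau(\phi)$ is normal, so $\tau(\phi)=f\eta$ for a local unit normal field $\eta$, and an orthonormal tangent frame $\{e_i\}_{i=1}^{m}$ of $M$ completed by $\eta$ forms an orthonormal basis of $T_pN$; the $\eta$-term of the trace contributes nothing, and what remains is $\langle\trace R^N(d\phi,\tau(\phi))d\phi,\tau(\phi)\rangle=f^2\sum_i\langle R^N(e_i,\eta)e_i,\eta\rangle=-\ricci^N(\tau(\phi),\tau(\phi))$. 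Hence
$$
\frac{1}{2}\Delta|\tau(\phi)|^2=\ricci^N(\tau(\phi),\tau(\phi))-|\nabla\tau(\phi)|^2.
$$

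Next I would use $\ricci^N\le 0$ to see that the right-hand side is pointwise nonpositive, integrate over the compact $M$ so that the left-hand side integrates to zero, and conclude that both nonpositive terms vanish identically; in particular $\nabla\tau(\phi)\equiv 0$, so $\tau(\phi)$ is parallel and $|\tau(\phi)|$ is constant. To pass from parallel to zero I would differentiate $\tau(\phi)=f\eta$: using the Weingarten formula $\nabla_X\eta=-A_\eta X$ and $\nabla^\perp\eta=0$ one gets $0=\nabla_X\tau(\phi)=(Xf)\eta-f A_\eta X$, hence $Xf=0$ and $f A_\eta=0$. Since $M$ is connected, $|f|=|\tau(\phi)|$ is a fixed constant; if it were nonzero then $f$ would be nowhere vanishing, forcing $A_\eta\equiv 0$, so $M$ would be totally geodesic and therefore $\tau(\phi)=(\trace A_\eta)\eta=0$, a contradiction. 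Thus $f\equiv 0$ and $M$ is minimal.

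The main --- essentially the only --- delicate point is the curvature computation above: getting the signs right, and recognizing that for a hypersurface the full trace of $R^N$ tested against $\tau(\phi)$ collapses to $\ricci^N(\tau(\phi),\tau(\phi))$. This is exactly what makes the weak curvature assumption $\ricci^N\le 0$ sufficient here; for submanifolds of higher codimension one cannot reduce to the Ricci curvature in this way, and a bound on the sectional curvature of $N$ is needed instead, which is why the preceding results are stated under $\riem^N\le 0$.
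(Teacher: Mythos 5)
Your proof is correct and follows essentially the same route as the paper's: a Weitzenb\"ock formula for $\tau(\phi)$ combined with the biharmonic equation and the reduction of the curvature term to $\ricci^N(\tau(\phi),\tau(\phi))$ yields, after integration over the compact $M$, that $\nabla\tau(\phi)=0$. Your final step via the Weingarten formula (giving $fA_\eta=0$ and hence $f=0$) is just the hypersurface specialization of the identity $|\tau(\phi)|^2=-\trace\langle d\phi(\cdot),\nabla_{(\cdot)}\tau(\phi)\rangle$ that the paper invokes, so the two arguments coincide in substance.
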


\begin{proposition}[\cite{O1}]\label{prop2.8}
Let $\phi:M\to N$ be a hypersurface such that $|\tau(\phi)|=constant$, where $\ricci^N\leq 0$. Then $\phi$ is biharmonic if and only if
it is minimal.
\end{proposition}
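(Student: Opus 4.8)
The plan is to mimic the proof of Theorem \ref{thm2.7} but drop the use of compactness, replacing it with the hypothesis that $|\tau(\phi)|$ is constant. Recall that for a hypersurface $\phi:M\to N$ with unit normal $\eta$ and mean curvature function $f$, the tension field is $\tau(\phi)=mf\eta$, so $|\tau(\phi)|=m|f|$ being constant means $|f|$ is constant, and hence (since $M$ is connected) $f$ is constant, say $f\equiv c$; if $c=0$ there is nothing to prove, so assume $c\neq 0$. First I would split the biharmonic equation \eqref{eq:Jiang} into its normal and tangential components. The tangential part, using that $\grad f=0$, forces $A(\grad f)=0$ automatically and leaves us with the normal equation, which in this setting reduces to
$$
\Delta^M f + f\,|A|^2 - f\,\ricci^N(\eta,\eta) = 0,
$$
or rather, since $f$ is constant so that $\Delta^M f = 0$, simply $c\big(|A|^2-\ricci^N(\eta,\eta)\big)=0$. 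Here $\Delta^M$ is the (geometer's, nonnegative) Laplace operator on functions; one has to be careful to separate the rough Laplacian on sections of $\phi^{-1}(TN)$ appearing in \eqref{eq:Jiang} from the scalar Laplacian, picking up the $|A|^2$ term from $\bar\nabla_X\bar\nabla_X\eta$.

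The key step is then the pointwise inequality: since $c\neq 0$ we get $|A|^2=\ricci^N(\eta,\eta)$ at every point of $M$. By the hypothesis $\ricci^N\leq 0$ we have $\ricci^N(\eta,\eta)\leq 0$, hence $|A|^2\leq 0$, which forces $A\equiv 0$. But then $f=\frac1m\trace A\equiv 0$, contradicting $c\neq 0$ unless $M$ is already minimal. Thus $\phi$ is minimal, and the converse is trivial since minimal immersions are harmonic and harmonic maps are biharmonic.

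I do not expect a serious obstacle here — the whole point of the Proposition is that the constancy of $|\tau(\phi)|$ is exactly strong enough to make the Weitzenb\"ock/integration argument of Theorem \ref{thm2.7} unnecessary, since the relevant scalar equation becomes algebraic. The one place requiring a little care is the precise form of the normal component of the bitension field for a hypersurface: one must correctly identify that $\langle\tau_2(\phi),\eta\rangle = -\Delta^M(mf) - mf|A|^2 - mf\,\ricci^N(\eta,\eta)$ up to sign conventions, and then note that the middle term has the \emph{opposite} sign to what a naive reading might suggest — this is where the inequality $\ricci^N\le 0$ does its work, ruling out any nonzero constant $f$. Everything else is a direct specialization of the hypersurface biharmonic equation with $\grad f = 0$.
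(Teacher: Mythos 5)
Your argument is correct, but it follows a genuinely different route from the one the paper sketches. The paper's proof works with the vector-valued tension field: since $|\tau(\phi)|^2$ is constant, the Weitzenb\"ock formula $\tfrac{1}{2}\Delta|\tau(\phi)|^2=\langle\Delta\tau(\phi),\tau(\phi)\rangle-|\nabla\tau(\phi)|^2$, combined with the biharmonic equation and $\ricci^N\leq 0$, yields $\nabla\tau(\phi)=0$, and then the identity $|\tau(\phi)|^2=-\trace\langle d\phi(\cdot),\nabla_{\cdot}\tau(\phi)\rangle$, valid for isometric immersions, forces $\tau(\phi)=0$. You instead specialize to the scalar normal component $\Delta f+\left(|A|^2-\ricci^N(\eta,\eta)\right)f=0$ of the bitension field (the explicit hypersurface decomposition the paper attributes to \cite{O-Pac}), note that constancy of $f$ kills $\Delta f$, and conclude from $|A|^2=\ricci^N(\eta,\eta)\leq 0$ that $A\equiv 0$, hence $f\equiv 0$. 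This buys a more elementary argument --- the positivity of $|A|^2$ replaces the Weitzenb\"ock machinery on $\phi^{-1}(TN)$ --- at the cost of invoking the decomposition formula. Two small inaccuracies, neither fatal: in a general ambient manifold $N$ the tangential equation is $2A(\grad f)+mf\grad f-2f(\ricci^N(\eta))^{\top}=0$, so with $\grad f=0$ it is not ``automatically'' satisfied (the term $-2f(\ricci^N(\eta))^{\top}$ survives); you simply do not need it, since only the normal part is used for the implication from biharmonicity to minimality. Also, the sign pattern quoted in your closing paragraph, $-\Delta(mf)-mf|A|^2-mf\,\ricci^N(\eta,\eta)$, contradicts the (correct) equation you actually use; the right version carries a minus sign on the Ricci term, which is exactly what makes $|A|^2-\ricci^N(\eta,\eta)\geq|A|^2\geq 0$ and lets the hypothesis $\ricci^N\leq 0$ do its work.
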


\noindent The proofs of these non-existence results are based on a classical Weitzenb\"ock formula
$$
\frac{1}{2}\Delta|\sigma|^2=\langle\Delta\sigma,\sigma\rangle-|\nabla\sigma|^2,\quad\sigma\in C(\phi^{-1}TN),
$$
where one considers $\sigma=\tau(\phi)$ and gets $\nabla\tau(\phi)=0$. Then, using the equation
$$
|\tau(\phi)|^2=-\trace\langle\cdot,\nabla_{\cdot}\tau(\phi)\rangle,
$$
which holds for isometric immersions, one obtains that $\tau(\phi)$ vanishes.

\begin{proposition}[\cite{O1}]
Let $\phi:M\to N$ be a smooth map such that $|\tau(\phi)|=constant$ and assume that there exists a point
$p\in M$ where $\rank\phi(p)\geq 2$. If $\riem^N<0$, then $\phi$ is
biharmonic if and only if it is harmonic.
\end{proposition}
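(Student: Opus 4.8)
The plan is to adapt the Weitzenb\"ock argument sketched in the paragraph above for the compact results (Theorem \ref{thm2.7}, Proposition \ref{prop2.8}), replacing the compactness assumption with the hypotheses $|\tau(\phi)|=\cst$ and $\rank\phi(p)\geq 2$ at some point, together with the strict inequality $\riem^N<0$. First I would apply the Weitzenb\"ock formula
$$
\frac{1}{2}\Delta|\tau(\phi)|^2=\langle\Delta\tau(\phi),\tau(\phi)\rangle-|\nabla\tau(\phi)|^2.
$$
Since $|\tau(\phi)|^2$ is constant, the left-hand side vanishes, and substituting the biharmonic equation $\Delta\tau(\phi)=-\trace R^N(d\phi,\tau(\phi))d\phi$ gives
$$
0=-\langle\trace R^N(d\phi,\tau(\phi))d\phi,\tau(\phi)\rangle-|\nabla\tau(\phi)|^2.
$$
Expanding the curvature term as $\sum_i\langle R^N(d\phi(e_i),\tau(\phi))d\phi(e_i),\tau(\phi)\rangle$, the sign convention on $R^N$ and the hypothesis $\riem^N\leq 0$ force this sum to be $\leq 0$, hence $-\langle\trace R^N(d\phi,\tau(\phi))d\phi,\tau(\phi)\rangle\geq 0$. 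Therefore both summands must individually vanish: $\nabla\tau(\phi)=0$ (so $\tau(\phi)$ is parallel) and $\sum_i\langle R^N(d\phi(e_i),\tau(\phi))d\phi(e_i),\tau(\phi)\rangle=0$.

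The next step is to exploit the strict negativity $\riem^N<0$. If at the point $p$ we had $\tau(\phi)(p)\neq 0$, then since $\rank\phi(p)\geq 2$ there exists a tangent vector $v\in T_pM$ with $d\phi(v)$ linearly independent from $\tau(\phi)(p)$; extending to a basis $\{e_i\}$ we get at least one index $i$ for which $d\phi(e_i)$ and $\tau(\phi)(p)$ span a nondegenerate $2$-plane, and strict negativity of the sectional curvature gives $\langle R^N(d\phi(e_i),\tau(\phi))d\phi(e_i),\tau(\phi)\rangle<0$ for that term. Because every term in the sum is $\leq 0$ and the total is $0$, this is a contradiction. Hence $\tau(\phi)(p)=0$, and since $|\tau(\phi)|$ is constant, $\tau(\phi)\equiv 0$ on all of $M$, i.e. $\phi$ is harmonic. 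The converse is trivial since harmonic maps are biharmonic.

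The main obstacle — really the only subtle point — is the linear-algebra step that turns ``$\rank\phi(p)\geq 2$'' into the existence of a basis vector $e_i$ with $d\phi(e_i)$ not proportional to $\tau(\phi)(p)$ and spanning a plane on which $\riem^N<0$ actually applies; one must be slightly careful that the relevant $2$-plane in $T_{\phi(p)}N$ is genuinely two-dimensional (which follows from the linear independence) so that the hypothesis $\riem^N<0$ is meaningful there. Everything else is a direct transcription of the Weitzenb\"ock computation already used for the compact case, with constancy of $|\tau(\phi)|^2$ playing the role that integration over the compact $M$ played before.
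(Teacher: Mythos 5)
Your strategy is exactly the one the paper sketches for this family of non-existence results and the one used in \cite{O1}: apply the Weitzenb\"ock formula to $\sigma=\tau(\phi)$, use $|\tau(\phi)|=$ constant to kill the left-hand side, substitute the biharmonic equation, and then exploit $\rank\phi(p)\geq 2$ together with the strict negativity of $\riem^N$ to force $\tau(\phi)(p)=0$ and hence $\tau(\phi)\equiv 0$. The linear-algebra step (the image of $d\phi_p$ cannot lie in the line spanned by $\tau(\phi)(p)$, so some orthonormal basis contains a vector $e_i$ with $d\phi(e_i)$ and $\tau(\phi)(p)$ spanning a genuine $2$-plane) is handled correctly.

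There is, however, a sign slip that, as written, invalidates the key deduction. With the paper's convention $R^N(U,V)W=[\nabla_U,\nabla_V]W-\nabla_{[U,V]}W$, the hypothesis $\riem^N\leq 0$ means $\langle R^N(X,Y)Y,X\rangle\leq 0$, and by the antisymmetry of the curvature tensor in its last two slots
$$
\sum_i\langle R^N(d\phi(e_i),\tau(\phi))d\phi(e_i),\tau(\phi)\rangle=-\sum_i\langle R^N(d\phi(e_i),\tau(\phi))\tau(\phi),d\phi(e_i)\rangle\geq 0,
$$
not $\leq 0$ as you assert (check it on $\mathbb{H}^n$, where $R^N(X,Y)Z=-\langle Y,Z\rangle X+\langle X,Z\rangle Y$ and each term equals $|d\phi(e_i)|^2|\tau(\phi)|^2-\langle d\phi(e_i),\tau(\phi)\rangle^2\geq0$ by Cauchy--Schwarz). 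This matters because your identity $0=-\langle\trace R^N(d\phi,\tau(\phi))d\phi,\tau(\phi)\rangle-|\nabla\tau(\phi)|^2$ only forces both summands to vanish if they have the \emph{same} sign; with your claimed signs (first term $\geq 0$, second $\leq 0$) the identity merely says they are opposite, and the conclusion ``both summands must individually vanish'' is a non sequitur. With the correct sign both summands are $\leq 0$, so both vanish, each individual curvature term vanishes, and your rank argument (with each term now $\geq 0$ and one term $>0$ under $\riem^N<0$) yields the desired contradiction. So the proof is correct once the orientation of the curvature inequality is fixed; as written, the central step does not follow from the stated inequalities.
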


Concerning the unique continuation property, we have the following results.

\begin{theorem}[\cite{VBCO},\cite{CMO}]
Let $\phi: M\rightarrow N$ be a biharmonic map. If $\phi$ is harmonic on an open subset, then it is harmonic everywhere.
\end{theorem}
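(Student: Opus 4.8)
The plan is to reduce the fourth-order biharmonic equation \eqref{eq:Jiang} to a second-order elliptic differential inequality for the tension field $\tau(\phi)$ alone, and then to invoke Aronszajn's unique continuation theorem. Since a biharmonic map is smooth, I would fix a relatively compact coordinate domain $\Omega\subset M$ small enough that $\phi(\Omega)$ lies in a single coordinate chart of $N$, work with the pulled-back coordinate frame $\{\partial/\partial y^{\alpha}\}$ of $N$ along $\phi$, and write $W:=\tau(\phi)$ for the section of $\phi^{-1}(TN)$ that we want to show vanishes identically.

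First, from \eqref{eq:Jiang} itself one has $\Delta W=-\trace R^{N}(d\phi,W)d\phi$, and over $\overline{\Omega}$ the curvature of $N$ along $\phi$ and $|d\phi|$ are bounded, so $|\Delta W|\le C|W|$, where $\Delta$ is the rough Laplacian on $\phi^{-1}(TN)$ and $C$ depends only on $\phi$ and the ambient geometry over $\overline{\Omega}$. Next I would expand this rough Laplacian in coordinates. Its $\alpha$-th component has the shape $-g^{ij}\partial_{i}\partial_{j}W^{\alpha}$ plus terms that are of first and zeroth order in $W$ with coefficients built from $\partial\phi$ and the Christoffel symbols of $N$ — all bounded over $\overline{\Omega}$ — together with one term involving the contracted second derivatives $g^{ij}\partial_{i}\partial_{j}\phi^{\beta}$ of the map itself. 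The key point is that this last quantity is controlled by the tension field equation $\tau(\phi)=\trace\nabla d\phi$, which expresses $g^{ij}\partial_{i}\partial_{j}\phi^{\beta}$ in terms of $W^{\beta}$, $\partial\phi$ and Christoffel symbols; substituting it removes the second derivatives of $\phi$ without creating new second-order terms. Combining this with the bound $|\Delta W|\le C|W|$ then yields, for each component,
\[
\bigl|g^{ij}\partial_{i}\partial_{j}W^{\alpha}\bigr|\le C\bigl(|W|+|\nabla W|\bigr),\qquad \alpha=1,\dots,n,
\]
an elliptic differential inequality with the same uniformly elliptic, scalar principal part $g^{ij}\partial_{i}\partial_{j}$ acting diagonally on the components.

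Finally, since $\phi$ is harmonic on some nonempty open subset, $W$ vanishes there; Aronszajn's unique continuation theorem for second-order elliptic inequalities, applied componentwise (the principal part being diagonal), then forces $W$ to vanish on the whole connected manifold $M$, that is, $\phi$ is harmonic everywhere.

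The step I expect to be the main obstacle — or at least the one requiring the most care — is precisely the reduction to a genuinely \emph{second-order} problem: a priori the rough Laplacian of $W$ involves the full Hessian of $\phi$, and it is essential that only its $g$-trace enters, so that the harmonic-map equation can be used to eliminate it. This is what keeps the principal part scalar and diagonal, which is exactly the hypothesis under which Aronszajn's theorem is available. By contrast, one cannot simply feed the coupled pair $(\phi,\tau(\phi))$ into a unique continuation statement, because $\phi$ itself does not vanish on the given open set.
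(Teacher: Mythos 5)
Your proposal is correct and follows essentially the same route as the cited proofs: introduce $W=\tau(\phi)$ as the new variable, use the biharmonic equation together with the identity $\tau(\phi)=\trace\nabla d\phi$ (which eliminates the traced Hessian of $\phi$) to obtain a second-order elliptic differential inequality $|g^{ij}\partial_i\partial_jW^{\alpha}|\le C(|W|+|\nabla W|)$ with scalar diagonal principal part, and then apply Aronszajn's unique continuation theorem plus an open--closed argument on the connected manifold $M$. This is precisely the ``reduce the fourth-order equation to a second-order semi-linear elliptic one and estimate'' scheme described in the paper, so no further comparison is needed.
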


\begin{theorem}[\cite{VBCO}]
\label{sphere}
Let $\phi_1,\phi_2: M\rightarrow N$ be two biharmonic maps. If they agree on an open subset, then they are identical.
\end{theorem}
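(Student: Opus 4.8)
The plan is to reduce the assertion to a unique continuation principle for a single fourth-order elliptic system. Since $\phi_1$ and $\phi_2$ are maps into $N$ they cannot be subtracted directly, so I would pass to a local Euclidean picture: fix a Nash isometric embedding $\iota:N\hookrightarrow\mathbb{R}^q$ and set $f_i=\iota\circ\phi_i$, so that $f_1,f_2:M\to\mathbb{R}^q$ coincide on the open set $U$ where $\phi_1=\phi_2$. Using the composition formula $\tau(\iota\circ\phi)=d\iota(\tau(\phi))+\trace(\nabla d\iota)(d\phi,d\phi)$ together with the biharmonic equation \eqref{eq:Jiang}, one rewrites $\tau_2(\phi_i)=0$ as a semilinear fourth-order equation for $f_i$ of the schematic form
$$
\Delta^2 f_i=P\bigl(x,f_i,\nabla f_i,\nabla^2 f_i,\nabla^3 f_i\bigr),
$$
where $P$ is smooth in $x$ and in $f_i$, and polynomial in the listed derivatives: the leading part $\Delta^2 f_i$ comes from the term $-\Delta\tau(\phi_i)$, while the second fundamental form of $\iota$ and the curvature term $\trace R^N(d\phi_i,\tau(\phi_i))d\phi_i$ contribute only terms of order at most three.

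Next I would form $u=f_1-f_2$ on a relatively compact neighbourhood $\Omega$ of a point $p\in\overline U$, where the $3$-jets of $f_1$ and $f_2$ are bounded. Subtracting the two equations and applying the mean value theorem to $P$ yields a differential inequality
$$
|\Delta^2 u|\le C\bigl(|u|+|\nabla u|+|\nabla^2 u|+|\nabla^3 u|\bigr)\quad\text{on }\Omega,
$$
with $C$ depending only on $\Omega$, $\phi_1$ and $\phi_2$. The operator $\Delta^2$ is elliptic of order four with principal symbol a perfect square of a positive definite quadratic form, so it falls within the scope of the Aronszajn--Krzywicki--Szarski unique continuation theorem: a solution of such an inequality on a connected set that vanishes to infinite order at a point vanishes identically near it. Since $u\equiv 0$ on $U$, it vanishes to infinite order at every point of $\overline U$, hence $\phi_1=\phi_2$ on a neighbourhood of $\overline U$.

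The final step is a standard connectedness argument. Let $A$ be the interior of the set of points near which $\phi_1$ and $\phi_2$ agree; it is open and, by hypothesis, nonempty. If $q\in\overline A$, then in a local Euclidean picture around $q$ the difference $u$ vanishes together with all its derivatives at $q$, and the unique continuation step above forces $u\equiv 0$ near $q$, so $q\in A$; thus $A$ is also closed. As $M$ is connected, $A=M$ and $\phi_1\equiv\phi_2$.

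I expect the real work, rather than the conceptual core, to be in the second step: one must expand the biharmonic equation in the embedded coordinates carefully enough to confirm that every term other than $\Delta^2 f_i$ is of order $\le 3$ and depends on the relevant jets in a locally Lipschitz fashion, so that the difference of two solutions genuinely satisfies the displayed fourth-order inequality with locally bounded coefficients. Once this is in place, the Aronszajn-type theorem may be invoked as a black box.
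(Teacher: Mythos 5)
Your overall architecture (Nash embedding, subtract the two solutions, mean value theorem to get a linear differential inequality, Aronszajn plus a connectedness argument) is the right skeleton, and the final open--closed argument is exactly the standard one. But there is a genuine gap at the central analytic step: you apply ``Aronszajn--Krzywicki--Szarski'' as a black box to the \emph{fourth-order} inequality $|\Delta^2 u|\le C\bigl(|u|+|\nabla u|+|\nabla^2 u|+|\nabla^3 u|\bigr)$. Aronszajn's theorem (and its Aronszajn--Krzywicki--Szarski refinement) is a statement about second-order elliptic inequalities $|\Delta u|\le C(|u|+|\nabla u|)$; it does not cover operators of order four. The principal symbol of $\Delta^2$ has double characteristics, the Carleman estimates behind Aronszajn's result do not propagate to powers of the Laplacian with general lower-order perturbations, and unique continuation for fourth-order operators with arbitrary third-order terms is a delicate matter with no classical off-the-shelf theorem to invoke. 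So the step ``the inequality falls within the scope of the Aronszajn-type theorem'' is precisely where the proof is missing its main idea.

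The actual proof (as the paper indicates in the paragraph following the three unique continuation theorems) proceeds by an \emph{order reduction} that exploits the specific structure of the bitension field: one introduces the new variable $v=\tau(\phi)$, so that \eqref{eq:Jiang} splits into the pair of second-order equations $\tau(\phi)=v$ and $\Delta v=-\trace R^N(d\phi,v)d\phi$, whose right-hand sides depend only on the unknowns and their \emph{first} derivatives. In embedded coordinates this becomes a second-order semilinear elliptic system for $(f,v)$, the difference of two solutions satisfies a genuinely second-order inequality of Aronszajn type for the system, and only then is Aronszajn's theorem applicable. Note that eliminating $v$, as you do, is exactly what destroys this structure by reintroducing third-order derivatives on the right-hand side. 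To repair your argument you should keep $v$ as an independent unknown and run the Aronszajn argument for the $(f,v)$-system rather than for the scalar fourth-order equation.
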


\begin{theorem}[\cite{VBCO}]
Let $\phi: M\rightarrow N$ be a biharmonic map and let $P$ be a regular, closed, totally geodesic
submanifold of $N$. If an open subset of $M$ is mapped into $P$, then all of $M$
is mapped into $P$.
\end{theorem}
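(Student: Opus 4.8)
The plan is to run a connectedness argument. Let $W\subseteq M$ be the set of points admitting a neighborhood that $\phi$ maps into $P$. By hypothesis $W\neq\emptyset$, and $W$ is open by construction, so it suffices to prove that $W$ is closed; then $W=M$ since $M$ is connected. Fix $p_0\in\overline{W}$. Since $P$ is closed and $\phi$ is continuous, $\phi(p_0)\in P$, and since $P$ is regular we may choose Fermi coordinates $(x^1,\dots,x^k,y^1,\dots,y^{n-k})$ for $N$ centered at $\phi(p_0)$, in which $P=\{y=0\}$ locally. Writing $\phi=(\phi^i,\phi^\alpha)$ and setting $u:=(\phi^\alpha)$ for the normal components, $\phi$ maps a set into $P$ precisely where $u$ vanishes, and near $p_0$ there is a nonempty open set on which $u\equiv 0$ (a small neighborhood of any point of $W$ lying close to $p_0$).

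The geometric heart of the matter is that, because $P$ is totally geodesic, its second fundamental form vanishes; in Fermi coordinates this translates into $\Gamma^{\alpha}_{ij}(x,0)=0$, and, via the Gauss equation, into the fact that $R^N$ restricted to vectors tangent to $P$ takes values in $TP$. Consequently, expanding the normal components of the bitension field, $\tau_2(\phi)^\alpha=\bigl(-\Delta\tau(\phi)-\trace R^N(d\phi,\tau(\phi))d\phi\bigr)^\alpha$, in these coordinates, every term that contains no factor of $u$ or of its derivatives up to order three must vanish: the zeroth-order Christoffel contributions drop out because $\Gamma^\alpha_{ij}(x,0)=0$, and the curvature contributions drop out because $R^N(d\phi,\tau(\phi))d\phi$ is tangent to $P$ when $u\equiv 0$. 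Gathering the surviving terms and using that $\phi$ together with its derivatives up to the relevant order is bounded on a compact neighborhood of $p_0$, one obtains, on a small ball $B$ around $p_0$, an elliptic differential inequality of the form
$$
|\Delta^2 u|\le C\bigl(|u|+|\nabla u|+|\nabla^2 u|+|\nabla^3 u|\bigr).
$$

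To finish, I would invoke the unique continuation principle for fourth-order elliptic inequalities of this type — the same Aronszajn–Cordes–type mechanism that underlies Theorem \ref{sphere} and the ``harmonic on an open subset'' statement preceding it — obtained, for instance, by rewriting $\Delta^2 u=f$ as the second-order system $\Delta u=w$, $\Delta w=f$ and applying the vector-valued Aronszajn unique continuation theorem. Since $u$ vanishes on a nonempty open subset of $B$, it vanishes on all of $B$; hence $\phi$ maps a neighborhood of $p_0$ into $P$, i.e.\ $p_0\in W$. Thus $W$ is closed, and $W=M$, which is the assertion.

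I expect the main obstacle to be the bookkeeping in the middle paragraph: one must check carefully, term by term in Fermi coordinates, that the totally geodesic hypothesis removes exactly the inhomogeneous (i.e.\ zeroth-order-in-$u$) part of $\tau_2(\phi)^\alpha$, both inside $\Delta\tau(\phi)$ and inside the curvature term, so that the resulting inequality carries no free term and the unique continuation theorem genuinely applies. The topological wrapper and the appeal to the elliptic unique continuation result are routine by comparison.
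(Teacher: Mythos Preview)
Your proposal is correct and follows essentially the approach the paper describes: reduce the fourth-order biharmonic equation to a second-order elliptic system by introducing auxiliary variables, use the totally geodesic hypothesis (in Fermi coordinates) to show that the inhomogeneous terms in the normal directions vanish, and conclude with Aronszajn. The only refinement worth flagging is that the clean auxiliary variable for the reduction is $v^\alpha=\tau(\phi)^\alpha$ rather than $w=\Delta u$: with this choice both $|\Delta u^\alpha-v^\alpha|$ and $|\Delta v^\alpha|$ are bounded by $C(|u|+|\nabla u|+|v|+|\nabla v|)$, so Aronszajn applies to $(u,v)$ directly, whereas the naive choice $w=\Delta u$ leaves $|\nabla^2 u|$ and $|\nabla^3 u|$ on the right-hand side, and these are not pointwise controlled by $w,\nabla w$.
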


\begin{remark} Here, the word 'closed' is used in its topological sense, as $P$ is a closed subset of $N$.
\end{remark}

\noindent The main idea in the proofs of the unique continuation results is to define new variables such that the biharmonic equation, initially a semi-linear elliptic equation of order four, becomes a second order semi-linear  elliptic equation. Then, by making appropriate estimations, one applies the standard result of Aronszajn in \cite{A}.

An important property of constant mean curvature (CMC) proper biharmonic submanifolds in $\mathbb{S}^n$ is that $|H|$ is bounded from above by $1$. More precisely, we have the following proposition.

\begin{proposition}[\cite{Odoc}] Let $\phi:M^m\rightarrow\mathbb{S}^n$ be a CMC proper biharmonic submanifold. Then $|H|\in(0,1]$ and, moreover, $|H|=1$ if and only if $M$ lies in the small hypersphere $\mathbb{S}^{n-1}(1/\sqrt{2})$ as a minimal submanifold.
\end{proposition}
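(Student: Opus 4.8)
The plan is to use the normal part of the biharmonic equation \eqref{eq:Jiang} for an isometric immersion $\phi:M^m\to\s^n$, namely
\begin{equation}\label{eq:normalbih}
\Delta^{\perp}H+\trace B(\cdot,A_H\,\cdot)-mH=0,
\end{equation}
where $\Delta^{\perp}$ is the (non-negative) Laplacian on the normal bundle and $B$ the second fundamental form, together with the Weitzenb\"ock formula $\tfrac12\Delta|H|^{2}=\langle\Delta^{\perp}H,H\rangle-|\nabla^{\perp}H|^{2}$ recalled above. Only the normal equation \eqref{eq:normalbih} is needed for the inequality $|H|\le1$.

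First I would take the scalar product of \eqref{eq:normalbih} with $H$; using $\langle\trace B(\cdot,A_H\,\cdot),H\rangle=|A_H|^{2}$ and, since $|H|$ is constant, $\langle\Delta^{\perp}H,H\rangle=|\nabla^{\perp}H|^{2}$, this would give
$$
|A_H|^{2}=m|H|^{2}-|\nabla^{\perp}H|^{2}\le m|H|^{2}.
$$
On the other hand $\trace A_H=\langle mH,H\rangle=m|H|^{2}$, so the Cauchy--Schwarz inequality gives $|A_H|^{2}\ge(\trace A_H)^{2}/m=m|H|^{4}$. Combining the two estimates, $m|H|^{4}\le m|H|^{2}$, hence $|H|^{2}\le1$; and $|H|>0$, since otherwise $\phi$ would be minimal, hence harmonic, contradicting properness. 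This proves $|H|\in(0,1]$.

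For the equality case, $|H|=1$ would force equality throughout: $\nabla^{\perp}H=0$, and (equality in Cauchy--Schwarz, the principal curvatures in the direction $H$ being equal with sum $m$) $A_H=I$. Writing $\eta:=H$ and regarding $\s^n\subset\r^{n+1}$ with position vector $\vec x$ and flat connection $D$, I would compute, for $V$ tangent to $M$, that $D_V\vec x=V$ and, by the Gauss--Weingarten formulas, $D_V\eta=-A_{\eta}V+\nabla^{\perp}_V\eta=-V$, so that $D_V(\vec x+\eta)=0$. Hence $\vec x+\eta\equiv a$ is a constant vector with $|a|^{2}=|\vec x|^{2}+|\eta|^{2}=2$ and $\langle\vec x,a\rangle=|\vec x|^{2}=1$, so $M$ lies in the affine hyperplane $\{\langle\cdot,a\rangle=1\}$, at distance $1/\sqrt{2}$ from the origin; therefore $M\subset\s^n\cap\{\langle\cdot,a\rangle=1\}=\s^{n-1}(1/\sqrt{2})$. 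Finally, the unit normal of $\s^{n-1}(1/\sqrt{2})$ in $\s^n$ at $\vec x$ is $a-\vec x=\eta$, and $\s^{n-1}(1/\sqrt{2})$ is totally umbilical in $\s^n$ with $B^{\s^n}(V,W)=\langle V,W\rangle\eta$, so comparing mean curvature vectors gives $H=H^{\s^{n-1}(1/\sqrt{2})}_{M}+\eta$, whence $H^{\s^{n-1}(1/\sqrt{2})}_{M}=0$: $M$ is minimal in $\s^{n-1}(1/\sqrt{2})$. The converse is the standard verification: if $M$ is minimal in $\s^{n-1}(1/\sqrt{2})$ then for its unit normal $\eta$ one has $A_{\eta}=I$, $\nabla^{\perp}\eta=0$ and $|H|=1$, and substituting $\nabla^{\perp}H=0$, $A_H=I$ into the normal and tangential parts of \eqref{eq:Jiang} shows $\phi$ is proper biharmonic.

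The step I expect to be the main obstacle is producing \eqref{eq:normalbih} with the correct sign normalization --- in particular ensuring that pairing with $H$ yields $+|\nabla^{\perp}H|^{2}$, which is precisely what makes the two pointwise inequalities $|A_H|^{2}\le m|H|^{2}$ and $|A_H|^{2}\ge m|H|^{4}$ point in opposite directions. After that, the estimate $|H|\le1$ is immediate, and the rigidity in the equality case reduces to the one-line computation that $\vec x+H$ is a constant vector in $\r^{n+1}$. Note that compactness is never used: the CMC hypothesis already forces $\Delta|H|^{2}=0$.
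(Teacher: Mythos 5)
Your proof is correct and follows essentially the same route as the cited source: pairing the normal part of the biharmonic equation with $H$, using the constancy of $|H|$ to get $\langle\Delta^\perp H,H\rangle=|\nabla^\perp H|^2\ge 0$, combining with the Cauchy--Schwarz bound $|A_H|^2\ge m|H|^4$, and in the equality case deducing $\nabla^\perp H=0$, $A_H=\mathrm{I}$ and the constancy of $\vec{x}+H$. The sign conventions all check out against the paper's (geometer's) Laplacian, and you are right that compactness is not needed.
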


A link between proper biharmonic immersions in spheres and maps of finite type, in the sense of B.-Y. Chen, was established by the next result.

\begin{theorem}[\cite{BMO},\cite{BMO13},\cite{LOJPN}]
Let $\phi : M^m \rightarrow \mathbb{S}^{n}$ be a proper biharmonic immersion. Denote by $\psi = i\circ\phi : M \to \mathbb{R}^{n+1}$ the isometric immersion of $M$ in $\mathbb{R}^{n+1}$, where $i : \mathbb{S}^{n}\rightarrow \mathbb{R}^{n+1}$ is the canonical inclusion map. Then
\begin{enumerate}
\item[(i)] The map $\psi$ is of $1$-type if and only if $|H|=1$. In this case, $\psi = \psi_0 + \psi_{t_1}$, with $\Delta \psi_{t_1} = 2m \psi_{t_1}$, $\psi_{0}$ is a constant vector. Moreover, $\langle \psi_0 , \psi_{t_1} \rangle =0$ at any point, $|\psi_{0}| =|\psi_{t_1}| = 1/\sqrt{2}$ and
$\phi_{t_1}:M\rightarrow \mathbb{S}^{n-1}\left(1/\sqrt{2}\right)$ is a minimal immersion.
\item[(ii)] The map $\psi$ is of $2$-type if and only if $|H|$ is constant, $|H|\in (0,1)$. In this case $\psi = \psi_{t_1} + \psi_{t_2}$, with $\Delta \psi_{t_1} = m(1-|H|)\psi_{t_1}$,
$\Delta \psi_{t_2} = m(1+|H|)\psi_{t_2}$ and
\begin{align*}
\psi_{t_1} &= \tfrac{1}{2} \psi + \tfrac{1}{2|H|} H \, , \,  \psi_{t_2} = \tfrac{1}{2} \psi - \tfrac{1}{2|H|} H .
\end{align*}
Moreover, $\langle \psi_{t_1} , \psi_{t_2} \rangle =0$, $|\psi_{t_1}| =|\psi_{t_2}| = 1/\sqrt{2}$ and
$$ \phi_{t_i} : M \rightarrow \mathbb{S}^{n}\left(\frac{1}{\sqrt{2}}\right), \quad i=1,2,$$
are harmonic maps with constant density energy.
\end{enumerate}
\end{theorem}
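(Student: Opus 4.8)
The plan is to turn biharmonicity of $\phi$ into a single polynomial identity in the Laplacian for $\psi$ and then feed it into the standard spectral decomposition of finite-type maps together with Takahashi's theorem. The two inputs are pointwise identities for the $\mathbb{R}^{n+1}$-valued maps $\psi$ and $H$. First, since $\mathbb{S}^{n}$ is totally umbilical in $\mathbb{R}^{n+1}$ with mean curvature vector $-\psi$, the mean curvature vector of $\psi$ in $\mathbb{R}^{n+1}$ is $H-\psi$, so Beltrami's formula gives $\Delta\psi=m\psi-mH$. Second, regarding $H$ as an $\mathbb{R}^{n+1}$-valued map, splitting the normal bundle of $M$ in $\mathbb{R}^{n+1}$ as $\nu(M,\mathbb{S}^{n})\oplus\mathbb{R}\psi$, and running a Gauss--Weingarten computation (using the Codazzi equation of the space form to treat the $\trace(\nabla_{\cdot}A_H)(\cdot)$ term), one gets, for an arbitrary immersion, $\Delta H=mH-m|H|^{2}\psi-\tfrac1m\tau_2(\phi)$, where $\tau_2(\phi)$ is viewed as an $\mathbb{R}^{n+1}$-valued field via $\phi^{-1}T\mathbb{S}^{n}\hookrightarrow\mathbb{R}^{n+1}$; concretely, the tangential terms assemble into $-\tfrac1m$ times the tangent part of $\tau_2(\phi)$ (the combination $4\trace A_{\nabla^{\perp}_{\cdot}H}(\cdot)+m\grad|H|^{2}$), the $\nu(M,\mathbb{S}^{n})$-terms into $mH-\tfrac1m\tau_2(\phi)^{\perp}$ (built from $\Delta^{\perp}H+\trace B(\cdot,A_H\cdot)-mH$), and the umbilicity of $\mathbb{S}^{n}$ contributes $-(\trace A_H)\psi=-m|H|^{2}\psi$. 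Since $\phi$ is biharmonic, $\tau_2(\phi)=0$, so $\Delta H=mH-m|H|^{2}\psi$, and eliminating $H$ between the two identities yields
\[
(\Delta-m)^{2}\psi=m^{2}|H|^{2}\psi ,\qquad\text{equivalently}\qquad \Delta^{2}\psi-2m\Delta\psi+m^{2}(1-|H|^{2})\psi=0 .
\]

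For the forward implications, assume $|H|$ equals a positive constant $c$, so the identity becomes $(\Delta-m(1-c))(\Delta-m(1+c))\psi=0$. Put $u=\tfrac12\psi+\tfrac1{2c}H$ and $v=\tfrac12\psi-\tfrac1{2c}H$, so $u+v=\psi$, $\Delta u=m(1-c)u$, $\Delta v=m(1+c)v$, and from $|\psi|=1$, $\langle\psi,H\rangle=0$ one reads off $|u|^{2}=|v|^{2}=\tfrac12$, $\langle u,v\rangle=0$, and $u,v\neq0$ (e.g.\ $u=0$ would give $\langle H,\psi\rangle=-c\neq0$). A nonzero eigenfunction of $\Delta$ of constant norm has nonnegative eigenvalue, hence $m(1-c)\ge0$, i.e.\ $c\le1$. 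If $c\in(0,1)$ the two eigenvalues are distinct and positive and $0$ is not a root, so $\psi$ is of $2$-type with $\psi_{t_1}=u$, $\psi_{t_2}=v$, and — by the standard computation for constant-norm eigenmaps — the $\phi_{t_i}\colon M\to\mathbb{S}^{n}(1/\sqrt2)$ are harmonic with constant energy density. If $c=1$ then $\Delta u=0$: $u$ is a harmonic $\mathbb{R}^{n+1}$-valued map of constant norm $1/\sqrt2$, hence a constant vector $\psi_0$ by Bochner's formula, so $\psi=\psi_0+\psi_{t_1}$ with $\psi_{t_1}=v$ is of $1$-type; moreover $\psi_{t_1}=\psi-\psi_0$ is $\psi$ translated by a constant, hence a Riemannian immersion, it satisfies $\Delta\psi_{t_1}=2m\psi_{t_1}$ and $|\psi_{t_1}|=1/\sqrt2$, so by Takahashi's theorem it realizes $M$ as a minimal submanifold of $\mathbb{S}^{n-1}(1/\sqrt2)$. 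The orthogonality relations and norms asserted in the statement are exactly the identities $|u|^{2}=|v|^{2}=\tfrac12$, $\langle u,v\rangle=0$.

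For the converse implications $|H|$ is not known to be constant in advance; the first step is to prove it. If $\psi$ is of finite type it satisfies $Q(\Delta)\psi=c_0$ for a constant vector $c_0$ and a polynomial $Q$ of degree $1$ ($1$-type) or $2$ ($2$-type). Substituting $\Delta\psi=m\psi-mH$, and in the $2$-type case also $(\Delta-m)^{2}\psi=m^{2}|H|^{2}\psi$, one solves for $H$ in the form $\alpha\psi+\beta c_0$ with $\alpha,\beta$ polynomial in $|H|^{2}$ (the exceptional case $\lambda_{t_1}+\lambda_{t_2}=2m$ being excluded, since it would force $\psi$ to be constant). Taking the inner product with $\psi$ and using $\langle H,\psi\rangle=0$, $|\psi|=1$, one finds that $s:=\langle c_0,\psi\rangle$ satisfies a polynomial equation with constant coefficients, hence is constant on the connected manifold $M$; therefore $|H|^{2}$ is constant. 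Now $|H|=c$ with $0<c\le1$ (the last by the forward step), and the forward dichotomy applies: $\psi$ of $1$-type forces $c=1$ (any $c\in(0,1)$ would make $\psi$ of $2$-type), and $\psi$ of $2$-type forces $c\in(0,1)$ (as $c=1$ would make $\psi$ of $1$-type). Alternatively one may quote here the Proposition of \cite{Odoc}, $|H|\in(0,1]$ for CMC proper biharmonic submanifolds of $\mathbb{S}^{n}$.

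The only genuinely delicate point is the identity for $\Delta^{\mathbb{R}^{n+1}}H$: one has to keep the second fundamental form $B$ and normal connection of $M$ in $\mathbb{R}^{n+1}$ carefully apart from those of $M$ in $\mathbb{S}^{n}$ — they differ only by $\psi$-components, which is the sole source of the $-m|H|^{2}\psi$ term — and then recognize the leftover tangential and normal terms as, respectively, the tangent and normal parts of $\tau_2(\phi)$ up to the factor $-\tfrac1m$. Everything after that identity is linear algebra in the $\Delta$-eigenspaces of $\psi$ together with classical finite-type theory.
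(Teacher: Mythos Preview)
The paper does not prove this theorem; it is a survey and the result is merely quoted, with references to \cite{BMO}, \cite{BMO13}, \cite{LOJPN}. Your argument follows the standard route taken in those references: derive the polynomial identity $(\Delta-m)^2\psi=m^2|H|^2\psi$ from biharmonicity via the Beltrami formula $\Delta\psi=m\psi-mH$ and the computation of $\Delta H$ in $\mathbb{R}^{n+1}$, then read off the spectral decomposition using the explicit eigenvectors $u=\tfrac12\psi+\tfrac{1}{2|H|}H$ and $v=\tfrac12\psi-\tfrac{1}{2|H|}H$. The forward direction is clean and correct.

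The converse direction is essentially right but a bit compressed. Two points would benefit from one more line each. First, the claim that $s=\langle c_0,\psi\rangle$ satisfies a polynomial equation with constant coefficients: in the $2$-type case, after substituting $\Delta^2\psi=2m\Delta\psi-m^2(1-|H|^2)\psi$ into $(\Delta-\lambda_1)(\Delta-\lambda_2)\psi=c_0$ and pairing with $\psi$, one gets $s=m^2|H|^2-(m-\lambda_1)(m-\lambda_2)$; then taking norms of the resulting expression for $H$ yields a genuine quadratic in $s$ with constant coefficients, so $s$ (hence $|H|^2$) is locally constant. Second, once $|H|$ is shown to be constant, you should remark that the explicit $(u,v)$-decomposition you constructed \emph{is} the spectral decomposition of $\psi$ (by uniqueness of the finite-type decomposition), which is what guarantees that in the $2$-type case there is no constant term $\psi_0$, as asserted in the statement. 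With those two sentences added, the proof is complete and matches the approach of the cited sources.
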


This theorem was the starting point for the studies on biharmonic surfaces with constant Gaussian curvature in space forms $N^n(c)$ (see \cite{FO-DGA}, \cite{LOJPN}), where one used the work of Y. Myiata (see \cite{M}) to obtain classification results.

\subsection{Properties of biconservative submanifolds}

The biconservative submanifolds do not have properties similar to those of biharmonic ones in the previous subsection as they are not characterized by an elliptic equation. However, they do have some very interesting properties of their own, especially when talking about biconservative surfaces. Moreover, the class of biconservative maps is richer than that of biharmonic maps and, in many situations when proper biharmonic submanifolds do not exist, we have examples of biconservative submanifolds.

One of these properties can be viewed as a generalization of a classical Hopf's result in higher codimension, and it shows that the notion of biconservativity is a quite natural one.

\begin{theorem}[\cite{MOR2},\cite{N}]\label{N}
Let $\phi:M^2\rightarrow N^n$ be a CMC surface. Then the following properties are equivalent:
\begin{itemize}
  \item [(i)] $M$ is biconservative;
  \item [(ii)] $\langle A_H\left(\partial_z\right),\partial_z \rangle$ is holomorphic;
  \item [(iii)] $A_H$ is a Codazzi tensor field.
\end{itemize}
\end{theorem}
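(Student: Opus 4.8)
The plan is to funnel all three conditions through the single scalar equation $\Div A_H=0$. First I would use part (iii) of the Proposition above: since $m=\dimension M=2$ and $|H|$ is constant, $\Div S_2=-\tfrac{m^2}{2}\grad(|H|^2)+2m\,\Div A_H=4\,\Div A_H$, so by the definition of biconservativity $M$ is biconservative if and only if $\Div A_H=0$. At the same time I would record that $\trace A_H=m|H|^2=2|H|^2$ is constant; this is the hypothesis that makes the other two equivalences work, and it is strictly weaker than requiring $H$ to be parallel in the normal bundle.

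For (i) $\Leftrightarrow$ (iii) I would isolate a purely tensorial lemma: on a surface, a self-adjoint $(1,1)$-tensor field $T$ with constant trace is a Codazzi tensor if and only if $\Div T=0$. One implication holds in every dimension: if $T$ is Codazzi then, in an orthonormal frame $\{e_i\}$, $(\Div T)(Z)=\sum_i(\nabla_{e_i}T)(e_i,Z)=\sum_i(\nabla_{e_i}T)(Z,e_i)=\sum_i(\nabla_Z T)(e_i,e_i)=Z(\trace T)=0$. The converse is where $\dimension M=2$ enters: the antisymmetric Codazzi defect $D(X,Y)=(\nabla_X T)Y-(\nabla_Y T)X$ is determined by the single vector $D(e_1,e_2)$, and a short computation using the symmetries of $\nabla T$ identifies $D(e_1,e_2)$, up to a rotation by $90^\circ$, with $(\Div T-d(\trace T))^{\sharp}$; hence $\Div T=0$ together with constancy of $\trace T$ forces $D\equiv 0$. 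Applying this to $T=A_H$ gives the equivalence.

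For (i) $\Leftrightarrow$ (ii) I would pass to a local isothermal coordinate $z$ on $M$, writing $\langle\partial_z,\partial_{\bar z}\rangle=\mu$, $\langle\partial_z,\partial_z\rangle=\langle\partial_{\bar z},\partial_{\bar z}\rangle=0$, and extending $A_H$ and the metric complex-bilinearly. Since $\trace A_H$ is constant, the trace-free part $B=A_H-|H|^2\,I$ satisfies $\Div B=\Div A_H$ (the divergence of $|H|^2\,I$ is $d(|H|^2)=0$) and $\langle B(\partial_z),\partial_z\rangle=\langle A_H(\partial_z),\partial_z\rangle=:f$. Using that in an isothermal coordinate the only nonzero Christoffel symbols are $\Gamma^{z}_{zz}=\partial_z\log\mu$ and its conjugate, a direct computation yields $(\Div B)(\partial_z)=\tfrac1\mu\,\partial_{\bar z}f$. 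Therefore $\Div A_H=0$ if and only if $\partial_{\bar z}f=0$, i.e. $f=\langle A_H(\partial_z),\partial_z\rangle$ is holomorphic; one also checks that $f\,dz^2$ transforms as a quadratic differential, so holomorphicity does not depend on the chosen coordinate. Combined with the previous step this closes the cycle (i) $\Leftrightarrow$ (ii) $\Leftrightarrow$ (iii).

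I expect the main obstacle to be the two-dimensional tensorial lemma together with the orientation and Christoffel bookkeeping needed to identify $(\Div B)(\partial_z)$ with $\partial_{\bar z}f$. It is worth stressing that there is genuine content here: in higher codimension (or for a non-space-form ambient) $A_H$ need not be a Codazzi tensor even when $|H|$ is constant, because $H$ is generally not parallel in the normal bundle and the Codazzi equation for $A_H$ carries ambient-curvature and $\nabla^{\perp}H$ terms — biconservativity is precisely the condition under which these terms cancel. For a surface in a three-dimensional space form all three conditions hold automatically, and the theorem reduces to Hopf's classical result.
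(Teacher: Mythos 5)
Your proposal is correct and follows exactly the route the paper indicates: reduce biconservativity to $\Div A_H=0$ via the formula $\Div S_2=-\tfrac{m^2}{2}\grad(|H|^2)+2m\Div A_H$ with $|H|$ constant, and then invoke the general properties of a divergence-free symmetric $(1,1)$-tensor field with constant trace on a Riemannian surface (Codazzi condition and holomorphicity of the associated quadratic differential). The two-dimensional tensorial lemma and the isothermal-coordinate computation $(\Div B)(\partial_z)=\tfrac{1}{\mu}\partial_{\bar z}f$ are both carried out correctly, so the argument is complete.
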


\noindent We note that the above theorem follows by using the properties of any divergence-free symmetric
tensor field of type $(1, 1)$ defined on a Riemannian surface.

\begin{remark} In the case of biharmonic surfaces, the fact that $\langle A_H\left(\partial_z\right),\partial_z \rangle$ is holomorphic if and only if $M$ is CMC was proved in \cite{LO}.
\end{remark}

\begin{remark} We can say that $\langle A_H(\partial_z),\partial_z\rangle$ is a generalization of the Hopf's function as in $3$-dimensional space forms we have the following. If $M^2$ is a surface in $N^3(c)$ which is a topological sphere, then it is CMC, i.e., $|H|=\cst$, if and only if $\langle A_H(\partial_z),\partial_z\rangle$ is holomorphic (see \cite{MOR2}). We note that for any surface in $N^3(c)$, $\langle A_H(\partial_z),\partial_z\rangle$ is holomorphic when $M$ is CMC, but, in general, the converse does not hold. All non-CMC surfaces with $\langle A_H\left(\partial_z\right),\partial_z \rangle$ holomorphic must have the curvature equal to $c$ (but they are not umbilical). In \cite{MOR2}, there were found all surfaces in $\mathbb{R}^3$ with $\langle A_H\left(\partial_z\right),\partial_z \rangle$ holomorphic that are not CMC, and they are given by
$$
X_{\alpha}(u,v)=(u\cos v,u\sin v,\alpha u),
$$
where $\alpha$ is a positive real constant. In $\mathbb{S}^3$ such surfaces are given by
$$
X_{\alpha}(u,v)=\left(\frac{\cos u}{\alpha}\cos v,\frac{\sin u}{\alpha}\cos v,\frac{\sqrt{\alpha^2-1}}{\alpha}\cos v,\sin v\right),
$$
where $\alpha$ is a real constant, $\alpha>1$ (see \cite{N}).
\end{remark}

\begin{remark}
If $\phi:M^2\rightarrow N^n$ is a non-pseudo-umbilical $CMC$ biconservative surface, then the set of pseudo-umbilical points has no accumulation points. Also, if $M^2$ is a $CMC$ biconservative surface and a topological sphere, then it is pseudo-umbilical (see \cite{MOR2}); this should be compared with a classical result: a $PMC$ surface $M^2$, i.e., a surface satisfying $\nabla^{\perp}H=0$, of genus zero in a space form is pseudo-umbilical (see \cite{DH}).
\end{remark}

By using Theorem \ref{N} and the CMC hypothesis, around non-pseudo-um\-bi\-li\-cal points, one can obtain an explicit form of the metric on the surface and of the shape operator $A_H$. It follows that the CMC biconservative surfaces with no pseudo-umbilical points are globally conformally flat. Then, one can prove the following proposition.

\begin{proposition}[\cite{N}]\label{past}
Let $\phi:M^2\to N^n$ be a CMC biconservative surface. Assume that $M$ is compact and does not have pseudo-umbilical points. Then $M$ is a topological torus and, moreover, if $K\geq 0$ or $K\leq 0$, we have $\nabla A_H=0$ and $K=0$.
\end{proposition}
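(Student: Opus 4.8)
The plan is to combine Theorem~\ref{N} with the classical theory of holomorphic quadratic differentials on compact Riemann surfaces (recall that $M$ is oriented). Equip $M$ with the conformal structure of its induced metric $g$, fix a local complex coordinate $z=x+\i y$, and write $g=\lambda^{2}|dz|^{2}$, with associated orthonormal frame $e_{1}=\lambda^{-1}\partial_{x}$, $e_{2}=\lambda^{-1}\partial_{y}$. Since $M$ is a CMC biconservative surface, Theorem~\ref{N} gives that $\langle A_{H}(\partial_{z}),\partial_{z}\rangle$ is holomorphic, so $Q:=\langle A_{H}(\partial_{z}),\partial_{z}\rangle\,dz^{2}$ is a globally defined holomorphic quadratic differential on $M$. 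A direct computation gives
$$
\langle A_{H}(\partial_{z}),\partial_{z}\rangle=\frac{\lambda^{2}}{4}\Big(\langle A_{H}e_{1},e_{1}\rangle-\langle A_{H}e_{2},e_{2}\rangle-2\i\,\langle A_{H}e_{1},e_{2}\rangle\Big),
$$
and, since $\langle A_{H}e_{1},e_{1}\rangle+\langle A_{H}e_{2},e_{2}\rangle=2|H|^{2}$ by the CMC hypothesis, the coefficient of $Q$ vanishes at a point exactly when $A_{H}=|H|^{2}\,\id$ there, i.e.\ exactly at the pseudo-umbilical points. Hence, under our assumptions, $Q$ is a nowhere-vanishing holomorphic quadratic differential, and in particular $Q\not\equiv 0$.

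Next I would recall that a nonzero holomorphic quadratic differential on a compact Riemann surface of genus $\gamma$, being a holomorphic section of $K_{M}^{\otimes 2}$ (of degree $4\gamma-4$), has exactly $4\gamma-4$ zeros counted with multiplicity; this already forces $\gamma\neq 0$. Since our $Q$ has no zeros we get $4\gamma-4=0$, so $\gamma=1$ and $M$ is a topological torus. For the remaining assertions, Gauss--Bonnet yields $\int_{M}K\,dv_{g}=2\pi\chi(M)=0$, so if $K\geq 0$ everywhere or $K\leq 0$ everywhere, then necessarily $K\equiv 0$.

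Finally I would upgrade $K\equiv 0$ to $\nabla A_{H}=0$. Because $Q$ is nowhere zero, writing $M=\mathbb{C}/\Lambda$ with flat coordinate $\zeta$, the coefficient of $Q$ in $\zeta$ is a $\Lambda$-periodic entire function, hence a nonzero constant; after a linear change of coordinate we obtain a \emph{global} complex coordinate $z$ on $M$ with $Q=dz^{2}$, in which $g=\lambda^{2}|dz|^{2}$ for a positive $\Lambda$-periodic function $\lambda$. The displayed formula then forces $\langle A_{H}e_{1},e_{2}\rangle=0$ and $\langle A_{H}e_{1},e_{1}\rangle-\langle A_{H}e_{2},e_{2}\rangle=4/\lambda^{2}$, so in the frame $\{e_{1},e_{2}\}$ one has $A_{H}=\mathrm{diag}\big(|H|^{2}+2/\lambda^{2},\,|H|^{2}-2/\lambda^{2}\big)$. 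Since $K=-\lambda^{-2}(\partial_{x}^{2}+\partial_{y}^{2})\log\lambda$, the condition $K\equiv 0$ makes $\log\lambda$ a harmonic function on the compact torus, hence constant, so $\lambda$ is constant; then the Levi-Civita connection $1$-form of $g$ in the frame $\{e_{1},e_{2}\}$ vanishes, so $\{e_{1},e_{2}\}$ is a global parallel orthonormal frame in which $A_{H}$ has constant entries, and therefore $\nabla A_{H}=0$.

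The only non-routine ingredient is Theorem~\ref{N} together with the zero-count for holomorphic quadratic differentials; the rest is bookkeeping. The step that needs the most care is passing from the local normal form of $(g,A_{H})$ to a genuinely global coordinate on the torus — exactly the place where the absence of pseudo-umbilical points, i.e.\ the non-vanishing of $Q$, is used a second time.
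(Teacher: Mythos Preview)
Your argument is correct and follows essentially the same route as the paper: both use Theorem~\ref{N} to produce a nowhere-vanishing holomorphic quadratic differential, deduce that $M$ is a torus, apply Gauss--Bonnet to force $K\equiv 0$, and then read off $\nabla A_H=0$ from the explicit normal form of $(g,A_H)$. The only cosmetic difference is that the paper phrases the first step as ``the surface is globally conformally flat'' (i.e.\ $|Q|$ gives a flat metric in the conformal class, hence $\chi(M)=0$), whereas you invoke the zero-count for sections of $K_M^{\otimes 2}$ directly; these are two sides of the same coin.
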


\begin{remark} One can compare this result with Theorem $5.4$ in \cite{COTAMS}.
\end{remark}

We can also deduce that if $M^2$ is a CMC biconservative surface satisfying certain additional hypotheses, then, up to a global conformal diffeomorphism, $M^2$ either is $\mathbb{R}^2$, or a cylinder, or a torus.

\begin{proposition} Let $\phi:M^2\rightarrow N^n$ be an oriented complete CMC biconservative surface. Denote by $\lambda_1$ and $\lambda_2$ the eigenvalue functions of $A_H$ and assume that $\mu=\lambda_1-\lambda_2$ is positive with $\inf\mu=\mu_0>0$. Then $M$ is parabolic.
\end{proposition}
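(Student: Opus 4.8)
The plan is to exploit the explicit local structure of CMC biconservative surfaces without pseudo-umbilical points, which (as noted before Proposition \ref{past}) makes the surface globally conformally flat with an explicitly computable conformal factor. Since $\mu=\lambda_1-\lambda_2>0$ everywhere, there are no pseudo-umbilical points, so the machinery of Theorem \ref{N} applies globally: writing the metric in the isothermal coordinate adapted to the principal directions of $A_H$, one gets $g=e^{2\rho}(du^2+dv^2)$ with $\rho$ a function that can be expressed in terms of $\mu$ (and the CMC constant). The key point is that the conformal factor degenerates in a controlled way, governed precisely by the function $\mu$.

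First I would recall the standard criterion for parabolicity of a conformally flat surface: a simply connected (or, more carefully, a complete) surface with metric $e^{2\rho}(du^2+dv^2)$ defined on $\mathbb{R}^2$, or on a cylinder, is parabolic provided the conformal factor does not decay too fast — for instance it suffices that $e^{2\rho}\geq c>0$, or more generally that $\int^\infty (\inf_{|x|=r} e^{\rho})\,dr=\infty$ via a length/volume growth estimate (the Ahlfors–Blanc–Fiala–Huber type criterion, or simply the fact that a complete surface of at most quadratic area growth is parabolic). So the real work is: (a) identify the global conformal type of $M$ — it will be $\mathbb{R}^2$ or a cylinder, since completeness plus the explicit metric rules out a compact quotient unless $\mu$ were to allow it, but here I expect the relevant cases to be $\mathbb{R}^2$ and the cylinder; and (b) show the conformal factor is bounded below, or has the right growth, using $\inf\mu=\mu_0>0$.

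For step (b), the hypothesis $\inf\mu=\mu_0>0$ is exactly what prevents the conformal factor from collapsing: in the explicit formulas from \cite{N}, $\rho$ is determined by an ODE in one of the coordinates whose solution is comparable to a power of $\mu$, so the uniform lower bound $\mu\geq\mu_0$ translates into $e^{2\rho}\geq$ (positive constant), possibly after also using the CMC value and the Gauss/Codazzi equations to control the other ingredients. With a uniform positive lower bound on $e^{2\rho}$ on all of $\mathbb{R}^2$ (or on the cylinder, whose universal cover is $\mathbb{R}^2$ with a $\mathbb{Z}$-periodic, hence still uniformly bounded, factor), the identity map to the flat model is a metric-non-decreasing map onto a parabolic surface, and parabolicity is inherited: $(M,g)$ dominates the flat plane, so it is parabolic. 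One has to be slightly careful that parabolicity passes to the quotient (cylinder) — but a Riemannian covering of a parabolic manifold with fibers of subexponential, here finite or linear, growth is parabolic, which covers the cylinder case.

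The main obstacle will be step (a) together with making (b) rigorous: one must be sure the explicit local normal form from Theorem \ref{N}/\cite{N} genuinely globalizes under the completeness assumption — i.e., that the adapted isothermal coordinates extend to give a global conformal diffeomorphism with $\mathbb{R}^2$ or a cylinder (this uses that $\mu$ never vanishes, so the principal foliations are globally defined and nonsingular, and completeness to see the coordinate domain is all of $\mathbb{R}^2$ or a strip that closes up) — and then to extract from those formulas the precise dependence of $\rho$ on $\mu$ that yields the lower bound. Everything after that is the soft parabolicity argument via area growth or via domination by the flat plane, which I would phrase using the volume-growth characterization of parabolic surfaces to avoid case analysis on the conformal type.
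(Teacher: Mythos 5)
Your overall strategy---exploit the conformal flatness of a CMC biconservative surface without pseudo-umbilical points, use $\inf\mu=\mu_0>0$ to control the conformal factor, and conclude by a soft parabolicity argument---is the same as the paper's, which disposes of the proposition in one line: by Theorem 4.5 of \cite{N} (see also Theorem 12 of \cite{LO}), the metric $\mu g$ is flat and globally conformally equivalent to $g$; since $\mu\geq\mu_0>0$ and $g$ is complete, $\mu g\geq\mu_0 g$ is complete as well; a complete flat surface is a quotient of the Euclidean plane and hence parabolic; and parabolicity of a surface is a conformal invariant, so $(M,g)$ is parabolic. In particular you do not need to identify the conformal type case by case, nor to extract any explicit formula for the conformal factor in adapted isothermal coordinates.

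However, the step by which you propose to close the argument contains a genuine error. You claim that a uniform lower bound $e^{2\rho}\geq c>0$ makes the identity map to the flat model ``metric-non-decreasing'' and that parabolicity is then inherited by domination. Metric domination does not transfer parabolicity in dimension two: the metric $dx^{2}+\cosh^{2}(x)\,dy^{2}$ on $\mathbb{R}^{2}$ dominates the flat metric yet has constant curvature $-1$, i.e.\ it is the hyperbolic plane, which is not parabolic. What actually saves you is conformality, not domination: in dimension two the Dirichlet energy $\int|\nabla u|^{2}\,dA$ is a conformal invariant, so parabolicity depends only on the conformal class, and once $g$ is known to be globally conformal to a complete flat metric you are done, with no further need for the pointwise lower bound on $e^{2\rho}$. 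Likewise, the quadratic-area-growth criterion you invoke cannot be reached from a lower bound on the conformal factor, which bounds the area from below rather than from above. Finally, the hypothesis $\inf\mu=\mu_0>0$ enters precisely to guarantee completeness of the flat metric $\mu g$ (so that its universal cover is the whole plane and not a disk); your outline buries this inside the globalization of isothermal coordinates, which is exactly where you flag the main difficulty. Replacing the domination/volume-growth step by ``complete flat $\Rightarrow$ parabolic'' together with conformal invariance turns your outline into the paper's proof.
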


\noindent The proof is a direct consequence of Theorem 4.5 in \cite{N} (see also Theorem 12 in \cite{LO}) as $\mu g$ is a flat complete metric globally conformally equivalent to $g$.

Another important property of biconservative surfaces is given by the following result.

\begin{theorem}[\cite{N}]\label{t222}
Let $\phi:M^2\to N^n$ be a compact $CMC$ biconservative surface. If the Gaussian curvature $K$ of the surface is nonnegative, then $\nabla A_H=0$ and $M$ is flat or pseudo-umbilical.
\end{theorem}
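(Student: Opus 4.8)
The strategy is to combine Theorem \ref{N} with a Bochner-type argument applied to the eigenvalue gap of $A_H$. Since $M$ is CMC and biconservative, Theorem \ref{N} gives that $A_H$ is a Codazzi tensor field and that $\la A_H(\partial_z),\partial_z\ra\,dz^2$ is a holomorphic quadratic differential. Let $\lambda_1\geq\lambda_2$ be the eigenvalue functions of $A_H$; since $\trace A_H=2|H|^2$ is constant, $\lambda_1+\lambda_2$ is constant, and the gap $\mu:=\lambda_1-\lambda_2=\sqrt 2\,|A_H-|H|^2\id|$ is a continuous, nonnegative function on $M$ which vanishes precisely at the pseudo-umbilical points. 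If $M$ is pseudo-umbilical at every point, then $A_H=|H|^2\id$ with $|H|=\cst$, so $\nabla A_H=0$ and there is nothing left to prove; assume therefore that $M$ is not pseudo-umbilical. By the Remark preceding Proposition \ref{past} (equivalently, because the holomorphic differential above is not identically zero, hence has isolated zeros), the set $Z$ of pseudo-umbilical points is discrete, and by compactness finite, say $Z=\{p_1,\dots,p_k\}$; set $M^{\ast}=M\setminus Z$, on which $\mu>0$.

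The next step is a pointwise identity on $M^{\ast}$ relating $\mu$ and $K$. Locally on $M^{\ast}$ pick an orthonormal frame $\{e_1,e_2\}$ with $A_He_i=\lambda_ie_i$, and write $\nabla_{e_1}e_1=a\,e_2$, $\nabla_{e_2}e_2=-b\,e_1$. Expanding the Codazzi equation $(\nabla_{e_1}A_H)e_2=(\nabla_{e_2}A_H)e_1$ and using $\lambda_1+\lambda_2=\cst$ gives $e_1(\ln\mu)=-2b$ and $e_2(\ln\mu)=2a$; substituting this into the frame expression $K=e_2(a)-e_1(b)-a^2-b^2$ collapses, after routine manipulation, to $\Div\grad\ln\mu=2K$ on $M^{\ast}$, i.e. $\Delta\ln\mu=-2K$ in the sign convention of the Conventions. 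The local behaviour of $\ln\mu$ near a point $p_i\in Z$ is governed by holomorphicity: in a conformal coordinate $z$ centered at $p_i$ one has $\mu=c(z)\,|\la A_H(\partial_z),\partial_z\ra|$ with $c>0$ smooth, and $\la A_H(\partial_z),\partial_z\ra$ is holomorphic with a zero of some order $m_i\geq1$ at the origin; hence $\ln\mu-m_i\ln|z|$ extends smoothly across $p_i$.

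Now I would integrate. Applying Green's formula on $M$ with small disks around the $p_i$ deleted and letting their radii tend to $0$, the smooth parts contribute nothing to the boundary terms while each logarithmic singularity contributes a residue proportional to $m_i$ with a definite sign; combined with $\Div\grad\ln\mu=2K$ on $M^{\ast}$ and the closedness of $M$, this yields $\int_MK\,dv=-\pi\sum_{i=1}^{k}m_i\leq0$. Since $K\geq0$, we get $\int_MK\,dv=0$ and $\sum_im_i=0$, hence $k=0$: there are in fact no pseudo-umbilical points, and $K\equiv0$. Consequently $\ln\mu$ is smooth on the compact surface $M$ with $\Div\grad\ln\mu=2K=0$, so it is constant; thus $\mu$ and each $\lambda_j$ are constant, the relations $e_1(\ln\mu)=-2b$, $e_2(\ln\mu)=2a$ force $a=b=0$, the principal frame $\{e_1,e_2\}$ is parallel, and therefore $\nabla A_H=0$ while $M$ is flat, which is the claimed conclusion.

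\emph{Main obstacle.} Setting the routine parts aside, I expect the real work to be twofold: (a) the moving-frame computation that turns the Codazzi equation into the identity $\Div\grad\ln\mu=2K$ on $M^{\ast}$; and (b) the singular Green's formula near $Z$ — specifically, verifying that $\ln\mu$ has a logarithmic singularity at each $p_i$ with a \emph{positive} coefficient $m_i\geq1$, which is exactly what forces the residue term and the curvature term in $0=\int_M\Div\grad\ln\mu$ to carry the same sign and hence both to vanish when $K\geq0$. Had these two contributions had opposite signs, the argument would break down, so pinning down the correct sign of the residue is the crux.
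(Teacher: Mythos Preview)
Your argument is correct. The moving-frame computation does yield $\Div\grad\ln\mu=2K$ on $M^{\ast}$, the logarithmic behaviour of $\ln\mu$ at each $p_i$ follows from the holomorphicity of $\la A_H(\partial_z),\partial_z\ra$, and the residue computation gives $\int_M K\,dv=-\pi\sum_i m_i$ with the sign you claim; the conclusion then follows exactly as you describe.

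Your route, however, differs from the one indicated in the paper. The paper's proof (from \cite{N}) is based on a Simons-type formula for the Laplacian of a \emph{divergence-free} symmetric $(1,1)$-tensor on a Riemannian surface, without assuming the Codazzi condition. Applied to (the traceless part of) $A_H$, this produces an identity of the form $\tfrac{1}{2}\Delta|T_0|^2=-|\nabla T_0|^2-2K|T_0|^2+\cdots$ that is smooth on all of $M$; integrating it directly over the compact surface forces $K|T_0|^2\equiv0$ and $\nabla T_0=0$ when $K\geq0$, with no need to excise the pseudo-umbilical set or compute residues. By contrast, you invoke Theorem~\ref{N} to pass through the stronger Codazzi/holomorphicity property (legitimate here, since CMC biconservative implies Codazzi) and run the classical Hopf--Chern argument for holomorphic quadratic differentials, which makes the Gauss--Bonnet content $\int_M K=-\pi\sum m_i=\pi\chi(M)$ explicit. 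The paper's approach is technically cleaner and applies in principle to a broader class of tensors; yours is more geometric and transparently explains \emph{why} the dichotomy ``flat or pseudo-umbilical'' arises, namely as the vanishing of either the curvature integral or the zero-divisor of a holomorphic differential.
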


\noindent The proof of this result relies on a formula for the Laplacian of a divergence-free symmetric tensor of type $(1,1)$ defined on a Riemannian surface, not necessarily satisfying the Codazzi condition.

\begin{remark} We note that Proposition \ref{past}, only in the case $K\geq 0$, can be viewed as a consequence of Theorem \ref{t222}.
\end{remark}

In codimension two we have the following rigidity result, since any PMC submanifold in a space form is biconservative.

\begin{theorem}[\cite{MOR2}]
Let  $\phi:M^2\rightarrow N^4(c)$ be a CMC biconservative surface in a space form of constant sectional curvature $c\neq 0$. Then $M^2$ is PMC.
\end{theorem}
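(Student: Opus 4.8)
The plan is to reduce to the non-minimal case and then play the two Codazzi equations of the surface against its Gauss equation. If $|H|=0$ then $M^2$ is minimal and PMC trivially, so I would assume $|H|=\cst>0$; then $H$ is nowhere zero and one can work with the adapted orthonormal normal frame $e_3=H/|H|$, $e_4\perp e_3$. Writing $\nabla^{\perp}_Xe_3=\omega(X)e_4$ for a $1$-form $\omega$ and using that $|H|$ is constant, one has $\nabla^{\perp}_XH=|H|\,\omega(X)e_4$, so proving that $M^2$ is PMC amounts to showing $\omega\equiv0$. I would argue by contradiction: set $\mathcal{W}=\{p\in M^2:\omega_p\neq0\}$, an open set, and show $\mathcal{W}=\emptyset$, working locally near a putative point of $\mathcal{W}$.

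Since $M^2$ is CMC and biconservative, Theorem \ref{N} gives that $A_H$, and hence $A_{e_3}$, is a Codazzi tensor field. I would then put $\xi=H$ into the Codazzi equation of $M^2$ in $N^4(c)$,
$$
(\nabla_XA_\xi)Y-(\nabla_YA_\xi)X=A_{\nabla^{\perp}_X\xi}Y-A_{\nabla^{\perp}_Y\xi}X ,
$$
whose left-hand side now vanishes, leaving $\omega(X)A_{e_4}Y=\omega(Y)A_{e_4}X$ for all $X,Y$. On $\mathcal{W}$ this forces $\rank A_{e_4}\leq1$, and together with $\trace A_{e_4}=2\langle H,e_4\rangle=0$ it gives $A_{e_4}\equiv0$ on $\mathcal{W}$. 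Next I would feed $\xi=e_4$ into the same Codazzi equation: since $A_{e_4}$ vanishes identically on the open set $\mathcal{W}$, so does its covariant derivative, and one obtains $\omega(X)A_{e_3}Y=\omega(Y)A_{e_3}X$ on $\mathcal{W}$; thus $\rank A_{e_3}\leq1$ there, and because $\trace A_{e_3}=2|H|\neq0$ the eigenvalues of $A_{e_3}$ on $\mathcal{W}$ are the constants $2|H|$ and $0$. Being distinct, they admit smooth unit eigenvector fields $e_1,e_2$ around any point of $\mathcal{W}$.

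Finally I would use the Codazzi property of $A_{e_3}$ itself in the frame $\{e_1,e_2\}$: writing $\nabla_{e_i}e_1=p_ie_2$, $\nabla_{e_i}e_2=-p_ie_1$ and expanding $(\nabla_{e_1}A_{e_3})e_2=(\nabla_{e_2}A_{e_3})e_1$ with $A_{e_3}e_1=2|H|e_1$, $A_{e_3}e_2=0$ and $2|H|$ constant, the $e_1$- and $e_2$-components yield $p_1=p_2=0$. Hence $\{e_1,e_2\}$ is parallel on $\mathcal{W}$, so $M^2$ is flat there, $K=0$. On the other hand the Gauss equation of $M^2$ in $N^4(c)$ gives $K=c+\det A_{e_3}+\det A_{e_4}$, and on $\mathcal{W}$ both determinants vanish, so $K=c$. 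Therefore $c=0$, contradicting $c\neq0$; hence $\mathcal{W}=\emptyset$ and $\nabla^{\perp}H=0$, i.e.\ $M^2$ is PMC.

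I do not expect a serious obstacle here: the only care needed is in the bookkeeping of the Codazzi expansions and in noting that the whole argument is local (no compactness, completeness, or global orientability of the normal bundle is invoked). The conceptual engine is the trace obstruction forcing first $A_{e_4}$ and then $A_{e_3}$ to have rank at most one; once the eigenvalues of $A_{e_3}$ are constant, Codazzi trivializes the Levi-Civita connection along $\mathcal{W}$ and the incompatibility $K=0$ versus $K=c$ closes the proof.
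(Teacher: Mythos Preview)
Your argument is correct. The survey does not reproduce a proof of this statement (it is quoted from \cite{MOR2}), so there is no in-paper proof to compare against line by line; nevertheless, your route is the natural one and matches the spirit of the tools the survey highlights. The key step---using Theorem~\ref{N} to conclude that $A_H$ is Codazzi, feeding this into the submanifold Codazzi equation first with $\xi=H$ to kill $A_{e_4}$ on $\mathcal{W}$, and then with $\xi=e_4$ to force $\rank A_{e_3}\leq 1$---is exactly the expected mechanism, and your closing contradiction via the Gauss equation ($K=0$ from parallelism of the eigenframe versus $K=c$ from $\det A_{e_3}=\det A_{e_4}=0$) is clean. One minor remark: you invoke ``the Codazzi property of $A_{e_3}$ itself'' in the last step; it is worth noting explicitly that this is available \emph{globally} from Theorem~\ref{N} (since $A_H=|H|A_{e_3}$ with $|H|$ constant), so you are not relying on the vanishing of $A_{e_4}$ at that point---although either justification works on $\mathcal{W}$.
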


In the special case of biconservative surfaces in $\mathbb{R}^4$, the situation is a bit less rigid.

\begin{proposition}[\cite{MOR2}]
Let $\phi:M^2\rightarrow\mathbb{R}^4$ be a biconservative surface with constant mean curvature different from zero, which is not PMC. Then, locally, the surface is given by
\begin{equation*}
X(u,v)=(\gamma(u),v+a)=(\gamma^1(u),\gamma^2(u),\gamma^3(u),v+a),\quad a\in\mathbb{R} \,\, ,
\end{equation*}
where $ \gamma:I\rightarrow \mathbb{R}^3$ is a curve in $\mathbb{R}^3$ parametrized by arc-length, with constant non-zero curvature, and non-zero
torsion.
\end{proposition}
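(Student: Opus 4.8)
The plan is to combine the characterization supplied by Theorem~\ref{N} --- for a CMC surface, biconservativity is equivalent to $A_H$ being a Codazzi tensor field --- with the Codazzi equations of the flat ambient $\mathbb{R}^4$. Fix an adapted orthonormal normal frame $\{e_3=H/|H|,\,e_4\}$, write $A_3,A_4$ for the associated shape operators and $\omega=\langle\nabla^\perp e_3,e_4\rangle$ for the normal connection $1$-form, so that $M$ is PMC if and only if $\omega\equiv0$. Since $|H|$ is a non-zero constant we have $A_H=|H|A_3$, $\trace A_3=2|H|$, $\trace A_4=0$, and the ambient Codazzi equation reads
\begin{equation*}
(\nabla_XA_3)Y-(\nabla_YA_3)X=\omega(X)A_4Y-\omega(Y)A_4X .
\end{equation*}
Hence $A_H$ is Codazzi precisely when $\omega(X)A_4Y=\omega(Y)A_4X$ for all $X,Y$. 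As a trace-free symmetric endomorphism of a surface, $A_4$ is either $0$ or invertible, so on the open set $U=\{\omega\neq0\}$ --- non-empty because $M$ is not PMC --- this identity forces $A_4\equiv0$. Feeding $A_4\equiv0$ into the Codazzi equation for $e_4$, whose left-hand side then vanishes on $U$, gives $\omega(X)A_3Y=\omega(Y)A_3X$ there; testing against a vector spanning $\ker\omega$ shows $\ker A_3=\ker\omega$, so on $U$ the operator $A_3$ has eigenvalues $2|H|$ and $0$. Consequently $K=\det A_3+\det A_4=0$, i.e.\ $M$ is flat on $U$.

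Next I would choose the smooth orthonormal eigenframe $\{E_1,E_2\}$ with $A_3E_1=2|H|E_1$, $A_3E_2=0$ (so that $\omega(E_2)=0$), which is legitimate since on $U$ the eigenvalue $2|H|$ is a non-zero constant and hence $A_3$ has constant rank one. Plugging $X=E_1$, $Y=E_2$ into $(\nabla_{E_1}A_3)E_2=(\nabla_{E_2}A_3)E_1$ and using metric compatibility to express the connection coefficients through two functions, a short computation --- in which the factor $2|H|\neq0$ is exactly what forces both coefficients to vanish --- yields $\nabla E_i=0$. Thus $\{E_1,E_2\}$ is a parallel frame, $[E_1,E_2]=0$, and there are local coordinates $(u,v)$ with $\partial_u=E_1$, $\partial_v=E_2$ and flat metric $du^2+dv^2$. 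Translating the Gauss formula $\bar\nabla_{E_i}E_j=\nabla_{E_i}E_j+\sigma(E_i,E_j)$ into equations for the immersion $X(u,v)$ gives $X_{uu}=2|H|\,e_3$, $X_{uv}=0$, $X_{vv}=0$. From $X_{uv}=X_{vv}=0$ the vector $X_v$ is constant of unit length, so after an isometry of $\mathbb{R}^4$ we may assume $X_v=(0,0,0,1)$; then $\langle X_u,X_v\rangle=0$ gives $X(u,v)=(\gamma(u),v+a)$ with $\gamma:I\to\mathbb{R}^3$, and $|X_u|=1$ says $\gamma$ is arc-length parametrized. Finally $X_{uu}=2|H|e_3$ forces $e_3$ to lie in $\mathbb{R}^3\times\{0\}$ and $|\gamma''|=2|H|$, so $\gamma$ has constant curvature $\kappa=2|H|\neq0$; and were the torsion of $\gamma$ to vanish on a subinterval, the Frenet binormal would be constant there, whence $\nabla^\perp H=|H|\nabla^\perp e_3=0$, contradicting $\omega\neq0$ on $U$. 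Thus $\gamma$ has non-zero torsion, which is the claimed local description.

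I expect the computational heart to be the step extracting $\nabla E_i=0$ from the single Codazzi identity, i.e.\ showing that $M$ is not merely flat but carries a parallel orthonormal frame adapted to $A_3$; everything afterwards is bookkeeping with the Gauss formula, and everything before it is the (clean) linear algebra of trace-free $2\times2$ shape operators. The secondary subtleties to address are the smoothness of $A_3$, $A_4$ and of the eigenframe on $U$ (handled as above by constant rank), and making precise that ``locally'' here refers to neighbourhoods of points of the open set $U$ on which $M$ fails to be PMC.
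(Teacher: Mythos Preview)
The paper under review is a survey and does not contain a proof of this proposition; it merely cites the result from \cite{MOR2}. So there is no ``paper's own proof'' to compare against here. That said, your argument is correct and complete.

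Your route---use Theorem~\ref{N} to replace biconservativity by the condition that $A_H$ (hence $A_3$) is Codazzi, then exploit the ambient Codazzi equation in $\mathbb{R}^4$ together with the $2\times2$ linear algebra of trace-free shape operators to force $A_4=0$ and $A_3=\mathrm{diag}(2|H|,0)$ on the set $U=\{\omega\neq0\}$---is exactly the natural one, and it is the approach taken in \cite{MOR2}. The key computation $2a|H|E_1=2b|H|E_2$ from the Codazzi identity for $A_3$ in the eigenframe, giving $\nabla E_i=0$, is carried out correctly; this is indeed where the constancy and nonvanishing of $|H|$ enter essentially. The identification of the torsion of $\gamma$ with $\pm\omega(E_1)$ via the Frenet equations is also right and pins down the ``non-zero torsion'' clause.

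Two minor remarks for polish. First, your phrase ``non-zero torsion'' should be understood as ``torsion not identically zero on any subinterval'', matching your argument; you might state this explicitly. Second, the clarification that ``locally'' refers to neighbourhoods of points of $U$ is appropriate and worth keeping, since away from $U$ the surface is PMC and the description need not apply.
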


\subsection{Characterization formulas for biharmonic submanifolds}

As we have seen, the tension field $\tau(\phi)$ of a map $\phi:M\rightarrow N$ plays a central role in the biharmonic equation \eqref{eq:Jiang}. In the case of a submanifold $M^m$, we have $\tau(\phi)=mH$ and the tension field is a normal vector field. Therefore it is just natural to identify the tangent and normal parts of \eqref{eq:Jiang} in order to better characterize bihamornic submanifolds.

The simplest case occurs when the ambient manifold is a space form $N^n(c)$. In this context, the decomposition of $\tau_2(\phi)$ was written for the first time for any constant sectional curvature $c$ in \cite{BMO}, although for $c=0$, it had been given explicitly in \cite{BYC}, \cite{BYC2}, for $c=1$ in \cite{O1}, and for $c=-1$ in \cite{CMO}.

\begin{theorem} A submanifold $\phi:M^m\rightarrow N^n(c)$ is biharmonic if and only if
$$
\left\{
\begin{array}{l}
\ \Delta^\perp {H}+\trace B(\cdot,A_{H}\cdot)-mc\,{H}=0,
\vspace{2mm}
\\
\ 4\trace A_{\nabla^\perp_{(\cdot)}{H}}(\cdot)
+m\grad(|H|^2)=0,
\end{array}
\right.
$$
where $B$ is the second fundamental form, $\nabla^\perp$ and $\Delta^\perp$ the connection and the Laplacian
in the normal bundle, respectively.
\end{theorem}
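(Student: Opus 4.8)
The strategy is to substitute $\tau(\phi)=mH$ into Jiang's equation \eqref{eq:Jiang} and to split the resulting section $\tau_2(\phi)\in C(\phi^{-1}TN)$ into its parts tangent and normal to $M$; since $\tau_2(\phi)^{\top}$ and $\tau_2(\phi)^{\perp}$ must vanish separately, this will produce exactly the two equations of the system.

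First I would dispose of the curvature term. As $N^n(c)$ has constant sectional curvature, $R^N(X,Y)Z=c\big(\langle Y,Z\rangle X-\langle X,Z\rangle Y\big)$, so for a local orthonormal frame $\{e_i\}_{i=1}^m$ of $TM$, using $\langle e_i,H\rangle=0$,
$$
\trace R^N(d\phi,\tau(\phi))d\phi=\sum_{i=1}^m R^N(e_i,mH)e_i=-m^2c\,H,
$$
which is purely normal. The main computation is then $\Delta\tau(\phi)=-\trace(\nabla^\phi)^2(mH)$, which I would carry out at an arbitrary point $p$ using a frame geodesic at $p$. Expanding $(\nabla^\phi)^2_{e_i,e_i}(mH)$ by repeated use of the Gauss formula $\nabla^\phi_XY=\nabla_XY+B(X,Y)$ and the Weingarten formula $\nabla^\phi_X\xi=-A_\xi X+\nabla^\perp_X\xi$ applied to the normal field $H$, and then tracing over $i$, one finds that the normal part of $\Delta(mH)$ equals $m\,\Delta^\perp H+m\,\trace B(\cdot,A_H\cdot)$ (the term $\sum_i\nabla^\perp_{e_i}\nabla^\perp_{e_i}H$ producing $-\Delta^\perp H$), while the tangential part equals $-m\,\Div A_H-m\,\trace A_{\nabla^\perp_{(\cdot)}H}(\cdot)$, where $\Div A_H=\sum_i\nabla_{e_i}(A_He_i)$.

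The crucial step is to rewrite $\Div A_H$. Starting from $\langle A_HX,Y\rangle=\langle B(X,Y),H\rangle$, one gets at $p$
$$
\langle\Div A_H,Z\rangle=\sum_i\langle(\nabla^\perp_{e_i}B)(e_i,Z),H\rangle+\langle\trace A_{\nabla^\perp_{(\cdot)}H}(\cdot),Z\rangle,
$$
and here one invokes the Codazzi equation of $N^n(c)$: since $c$ is constant, $(\nabla^\perp_XB)(Y,Z)$ is totally symmetric in its three arguments, so $\sum_i(\nabla^\perp_{e_i}B)(e_i,Z)=\nabla^\perp_Z(\trace B)=m\,\nabla^\perp_ZH$, whence $\sum_i\langle(\nabla^\perp_{e_i}B)(e_i,Z),H\rangle=\tfrac{m}{2}\,Z(|H|^2)$. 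Thus $\Div A_H=\tfrac{m}{2}\,\grad(|H|^2)+\trace A_{\nabla^\perp_{(\cdot)}H}(\cdot)$. Assembling the pieces, $\tau_2(\phi)^{\perp}=-m\,\Delta^\perp H-m\,\trace B(\cdot,A_H\cdot)+m^2c\,H$, whose vanishing (divide by $-m$) is the first equation, and $\tau_2(\phi)^{\top}=-\tfrac{m^2}{2}\,\grad(|H|^2)-2m\,\trace A_{\nabla^\perp_{(\cdot)}H}(\cdot)$, whose vanishing (multiply by $-2/m$) is the second. The only genuinely delicate points are the tangent/normal bookkeeping inside the iterated covariant derivative and the correct use of Codazzi to generate the $\grad(|H|^2)$ term; the remaining manipulations are routine.
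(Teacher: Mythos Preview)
The paper does not actually supply a proof of this theorem: it is a survey, and the decomposition is simply stated with attributions to \cite{BMO}, \cite{BYC}, \cite{BYC2}, \cite{O1}, \cite{CMO}. Your sketch follows exactly the standard route used in those references---substitute $\tau(\phi)=mH$, expand the rough Laplacian via Gauss and Weingarten, evaluate the curvature term using $R^N(X,Y)Z=c(\langle Y,Z\rangle X-\langle X,Z\rangle Y)$, and then invoke Codazzi in a space of constant curvature to turn $\Div A_H$ into $\tfrac{m}{2}\grad|H|^2+\trace A_{\nabla^\perp_{(\cdot)}H}(\cdot)$---so there is nothing to contrast.

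One small slip to clean up: with the paper's sign convention $\Delta=-\trace(\nabla^{\phi})^2$, the tangential part of $\Delta(mH)$ is $+m\,\Div A_H+m\,\trace A_{\nabla^\perp_{(\cdot)}H}(\cdot)$, not the negative you wrote; your final expression for $\tau_2(\phi)^{\top}$ is nonetheless correct because $\tau_2(\phi)=-\Delta(mH)+m^2cH$ reinstates the minus sign. Just make the intermediate line consistent.
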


When $M$ is a hypersurface, we can consider the mean curvature function $f=(1/m)\trace A$, where $A=A_{\eta}$, $\eta$ being a unit section in the normal bundle, and we have the following corollary.

\begin{corollary} Let $M^m$ be a hypersurface in a space form $N^{m+1}(c)$. Then $M$ is biharmonic if and only if
$$
\left\{
\begin{array}{l}
\ \Delta f+(|A|^2-mc)f=0,
\vspace{2mm}
\\
\ 2A(\grad f)+mf\grad f=0,
\end{array}
\right.
$$
\end {corollary}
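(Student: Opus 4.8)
The plan is to rewrite the system of the preceding theorem in codimension one. Fix a local unit normal section $\eta$, set $A=A_\eta$, and let $f=(1/m)\trace A$, so that $H=f\eta$. The key preliminary remark is that the normal bundle is a line bundle and $\langle\eta,\eta\rangle\equiv1$, which forces $\nabla^\perp\eta=0$; hence $\nabla^\perp_XH=(Xf)\,\eta$ for every tangent $X$, and iterating this identity yields $\Delta^\perp H=(\Delta f)\,\eta$, where on the right $\Delta$ is the Laplacian on functions, with the sign convention of the Conventions section. Likewise $|H|^2=f^2$, so $\grad(|H|^2)=2f\,\grad f$.

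Next I would handle the two curvature-type terms in a local orthonormal frame $\{e_i\}$. From $B(X,Y)=\langle AX,Y\rangle\,\eta$ and $A_H=A_{f\eta}=fA$ one checks that $\trace B(\cdot,A_H\cdot)=f\,\trace(A^2)\,\eta=f\,|A|^2\,\eta$, using that $A$ is symmetric. Substituting this expression, together with $\Delta^\perp H=(\Delta f)\eta$ and $mc\,H=mcf\,\eta$, into the first equation of the theorem reduces it to the scalar equation $\Delta f+(|A|^2-mc)f=0$. For the tangent part, since $\nabla^\perp_{e_i}H=(e_if)\,\eta$ we get $A_{\nabla^\perp_{e_i}H}=(e_if)\,A$, and therefore $\trace A_{\nabla^\perp_{(\cdot)}H}(\cdot)=\sum_i(e_if)\,Ae_i=A(\grad f)$. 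Inserting this and $m\,\grad(|H|^2)=2mf\,\grad f$ into the second equation and dividing by $2$ gives $2A(\grad f)+mf\,\grad f=0$.

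Because every step above is an identity, both implications of the corollary are obtained at once. The only delicate points are bookkeeping ones: one must use the sign conventions fixed in the Conventions section for $\Delta$, $A$, and $B$ consistently, so that no spurious signs survive, and one must justify $\nabla^\perp\eta=0$ (equivalently, that $\Delta^\perp$ applied to $f\eta$ collapses to $(\Delta f)\,\eta$). Apart from this there is no real obstacle — the statement is simply the hypersurface translation of the theorem.
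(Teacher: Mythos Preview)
Your proof is correct and is precisely the specialization the paper intends: the corollary is stated immediately after the general theorem with no separate proof, and your computation---using $H=f\eta$, $\nabla^\perp\eta=0$, $\Delta^\perp H=(\Delta f)\eta$, $\trace B(\cdot,A_H\cdot)=f|A|^2\eta$, and $\trace A_{\nabla^\perp_{(\cdot)}H}(\cdot)=A(\grad f)$---is exactly the hypersurface translation implied there.
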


In this last form, for $c=1$, the decomposition of $\tau_2(\phi)$ was used in \cite{JHC}.

The biharmonic equation was also decomposed in its tangent and normal parts when the ambient space is a complex space form (see \cite{FLMO}) or a Sasakian space form (see \cite{FO-DGA}).

Finally, in the case when the ambient space is an arbitrary Riemannian manifold, the splitting of the biharmonic equation was obtained in \cite{LMO}.

\begin{theorem}
A submanifold $M^m$ in a Riemannian manifold $N$ is biharmonic if and only if
$$
\begin{cases}
\Delta^{\perp}H+\trace B(\cdot,A_H\cdot)+\trace(R^N(\cdot,H)\cdot)^{\perp}=0\\
4\trace A_{\nabla^{\perp}_{(\cdot)}H}(\cdot)+m\grad(|H|^2)+4\trace(R^N(\cdot,H)\cdot)^{\top}=0,
\end{cases}
$$
where $R^N$ is the curvature tensor of $N$. Moreover, the tangent part can be written as
$$
4\trace(\nabla A_H)(\cdot,\cdot)-m\grad(|H|^2)=0.
$$
\end{theorem}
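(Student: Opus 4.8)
The plan is to take Jiang's equation \eqref{eq:Jiang} and split it into its parts tangent and normal to $M$. Since $\phi$ is an isometric immersion, $\tau(\phi)=mH$ is a section of the normal bundle, so
$$
\tau_2(\phi)=-m\,\Delta H-m\,\trace R^N(d\phi,H)d\phi,
$$
and $M$ is biharmonic exactly when $\tau_2(\phi)^{\perp}=0$ and $\tau_2(\phi)^{\top}=0$ hold simultaneously. The curvature term splits immediately, $\trace R^N(d\phi,H)d\phi=\trace(R^N(\cdot,H)\cdot)^{\perp}+\trace(R^N(\cdot,H)\cdot)^{\top}$, so the work is all in decomposing $\Delta H$.

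The core computation is the Gauss--Weingarten expansion of the rough Laplacian of the normal field $H$. Fix $p\in M$ and a local orthonormal frame $\{e_i\}$ that is geodesic at $p$, so that $\Delta H=-\sum_i\nabla^\phi_{e_i}\nabla^\phi_{e_i}H$ at $p$. One application of the Weingarten formula gives $\nabla^\phi_{e_i}H=-A_He_i+\nabla^\perp_{e_i}H$; I would then differentiate once more, applying the Gauss formula to the tangent summand $-A_He_i$ and the Weingarten formula to the normal summand $\nabla^\perp_{e_i}H$, and sort the outcome by type. The normal part collapses to
$$
(\Delta H)^{\perp}=\Delta^{\perp}H+\trace B(\cdot,A_H\cdot),
$$
so setting $\tau_2(\phi)^{\perp}=0$ and dividing by $-m$ produces the first equation of the system. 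This is the main bookkeeping step: one must keep the two successive covariant derivatives and every mixed tangent/normal term straight.

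For the tangent part I would use what is already in the excerpt instead of pushing the expansion further. Since biharmonicity forces $\tau_2(\phi)^{\top}=0$, the identity $(\Div S_2)^{\sharp}=-\tau_2(\phi)^{\top}$ gives $\Div S_2=0$, and then $\Div S_2=-\tfrac{m^2}{2}\grad(|H|^2)+2m\,\Div A_H$ yields $4\,\Div A_H-m\,\grad(|H|^2)=0$, which is precisely the ``moreover'' assertion, with $\trace(\nabla A_H)$ read as the divergence of the $(1,1)$-tensor $A_H$. To recover the stated second equation one needs the trace-Codazzi identity for a submanifold,
$$
\Div A_H=\tfrac{m}{2}\grad(|H|^2)+\trace A_{\nabla^\perp_{(\cdot)}{H}}(\cdot)+\trace(R^N(\cdot,H)\cdot)^{\top},
$$
obtained by differentiating $\langle A_HY,Z\rangle=\langle B(Y,Z),H\rangle$, tracing, and invoking both the Codazzi equation of $N$ and the symmetry $\langle R^N(X,Y)Z,W\rangle=\langle R^N(Z,W)X,Y\rangle$ of the curvature tensor; feeding it into $4\,\Div A_H=m\,\grad(|H|^2)$ and simplifying gives $4\trace A_{\nabla^\perp_{(\cdot)}{H}}(\cdot)+m\,\grad(|H|^2)+4\trace(R^N(\cdot,H)\cdot)^{\top}=0$. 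Every implication above reverses, which yields the equivalence. Apart from the length of the second-derivative calculation, the obstacle I foresee is one of consistency: the covariant derivative of $A_H$ that comes straight out of the Gauss--Weingarten expansion differs from $\Div A_H$ by a $\trace A_{\nabla^\perp_{(\cdot)}{H}}$ term, and the sign of the curvature contribution must be tracked carefully against the chosen convention for $R^N$; once those are fixed, the remaining manipulations are routine.
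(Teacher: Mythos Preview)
The paper does not actually prove this theorem: it is a survey, and the result is stated with attribution to \cite{LMO} without proof. So there is no ``paper's own proof'' to compare against directly.

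That said, your outline is correct and is essentially the standard argument one finds in the cited literature. The Gauss--Weingarten expansion of $\Delta H$ gives the normal part exactly as you write, and your derivation of the tangent part via the stress-energy tensor $S_2$ is in fact very close in spirit to \cite{LMO}, which is precisely the paper where $S_2$ is studied and where this decomposition originates. Your trace-Codazzi identity for $\Div A_H$ is the key bridge between the two forms of the tangent equation, and your computation of it is right (including the curvature-symmetry step). The only caveat worth flagging is the one you already anticipate: the expression $\trace(\nabla A_H)(\cdot,\cdot)$ in the statement means the divergence of the $(1,1)$-tensor $A_H$ on $M$ (covariant derivative taken in $TM$, not in $\phi^{-1}TN$), so the $A_{\nabla^\perp H}$ term is genuinely separate from it; once that convention is fixed, everything lines up.
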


While these formulas were used to study biharmonic hypersurfaces in a Riemannian manifold, for the first time, in \cite{O1} (see Theorem \ref{thm2.7} and Proposition \ref{prop2.8}), they were explicitly written for this case in \cite{O-Pac}.

\begin{theorem} Let $M^m$ be a hypersurface in a Riemannian manifold $N$. Then $M$ is biharmonic if and only if
$$
\begin{cases}
\Delta f+(|A|^2-\ricci^N(\eta,\eta))f=0\\
2A(\grad f)+mf\grad f-2f(\ricci^N(\eta))^{\top}=0,
\end{cases}
$$
where $\eta$ is a unit normal vector field.
\end{theorem}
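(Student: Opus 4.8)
The plan is to obtain this characterization by specializing the preceding theorem on biharmonic submanifolds $M^m$ of an arbitrary Riemannian manifold $N$ to the codimension-one case. First I would work locally (or assume $M$ two-sided), fix a unit normal vector field $\eta$, and write $H=f\eta$ with $f=\frac{1}{m}\trace A$ and $A=A_\eta$. Since $\langle\eta,\eta\rangle=1$ and the normal bundle has rank one, $\nabla^\perp_X\eta=0$ for every $X\in C(TM)$, so $\nabla^\perp_XH=(Xf)\eta$; moreover $B(X,Y)=\langle AX,Y\rangle\eta$ and $A_H=A_{f\eta}=fA$. With these identities in hand, the proof reduces to rewriting the three terms in the normal part and the three terms in the tangential part of the general biharmonic equation purely in terms of $f$, $A$ and the Ricci curvature of $N$.

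For the normal part I would first show $\Delta^\perp H=(\Delta f)\eta$: indeed $(\nabla^\perp)^2_{X,Y}H=\big(X(Yf)-(\nabla_XY)f\big)\eta=\Hess f(X,Y)\,\eta$, and tracing over an orthonormal frame $\{e_i\}$ of $TM$ gives $\Delta^\perp H=-(\trace\Hess f)\,\eta=(\Delta f)\,\eta$ with our sign convention $\Delta f=-\trace\Hess f$. Next, $\trace B(\cdot,A_H\cdot)=\sum_i B(e_i,fAe_i)=f\big(\sum_i\langle Ae_i,Ae_i\rangle\big)\eta=f\,|A|^2\eta$. Finally, completing $\{e_i\}$ to the adapted orthonormal frame $\{e_1,\dots,e_m,\eta\}$ of $TN$, a standard contraction of $R^N$ together with $R^N(\eta,\eta)=0$ gives $\trace(R^N(\cdot,H)\cdot)^\perp=f\big(\sum_i\langle R^N(e_i,\eta)e_i,\eta\rangle\big)\eta=-f\,\ricci^N(\eta,\eta)\,\eta$. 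Adding the three contributions and cancelling $\eta$ yields the first equation $\Delta f+(|A|^2-\ricci^N(\eta,\eta))f=0$.

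For the tangential part I would compute $A_{\nabla^\perp_XH}(Y)=A_{(Xf)\eta}(Y)=(Xf)A(Y)$, so that $\trace A_{\nabla^\perp_{(\cdot)}H}(\cdot)=\sum_i(e_if)A(e_i)=A(\grad f)$; then $m\grad(|H|^2)=m\grad(f^2)=2mf\grad f$; and, contracting the curvature term against the same adapted frame, $\trace(R^N(\cdot,H)\cdot)^\top=f\sum_i\big(R^N(e_i,\eta)e_i\big)^\top=-f\,(\ricci^N(\eta))^\top$, where $\ricci^N(\eta)$ denotes the Ricci operator of $N$ evaluated on $\eta$. Substituting into $4\trace A_{\nabla^\perp_{(\cdot)}H}(\cdot)+m\grad(|H|^2)+4\trace(R^N(\cdot,H)\cdot)^\top=0$ and dividing by $2$ gives the second equation $2A(\grad f)+mf\grad f-2f(\ricci^N(\eta))^\top=0$.

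The computation is essentially routine, so the work will lie in the bookkeeping rather than in any single difficult step. The two points deserving care are the consistency of the sign conventions — the geometer's Laplacian $\Delta f=-\trace\Hess f$, and the Ricci convention under which the curvature trace reduces to $-f\,\ricci^N(\eta,\eta)\eta$ on the normal side and to $-f\,(\ricci^N(\eta))^\top$ on the tangential side — together with the observation that $\eta$ is globally defined only when $M$ is two-sided, which is harmless since the statement is local. One could instead start from the alternative "Moreover" form $4\trace(\nabla A_H)(\cdot,\cdot)-m\grad(|H|^2)=0$ of the tangential equation, but in a general Riemannian ambient this brings in the curvature-corrected Codazzi equation, so the $A_{\nabla^\perp_{(\cdot)}H}$ form used above is the more direct route.
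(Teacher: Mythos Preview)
Your derivation is correct, and the bookkeeping of signs for the curvature traces and the Laplacian is handled properly. The paper itself does not supply a proof of this theorem: it merely records the hypersurface formulas as the specialization of the preceding general-codimension theorem and attributes their explicit form to \cite{O-Pac}. Your approach of restricting that general decomposition to rank-one normal bundle with $H=f\eta$ and $\nabla^\perp\eta=0$ is precisely the intended route and matches what the surrounding text invites.
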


Many examples of proper-biharmonic submanifolds in spheres are provided by the following results.

\begin{theorem}[\cite{BO}]
Let $n_1, n_2$ be two positive integers such that $n_1+n_2=n-1$, and
let $M_1$ be a submanifold in $\mathbb{S}^{n_1}(1/\sqrt 2)$ of
dimension $m_1$, with $0 \leq m_1 \leq n_1$, and let $M_2$ be a
submanifold in $\mathbb{S}^{n_2}(1/\sqrt 2)$ of dimension $m_2$,
with $0 \leq m_2 \leq n_2$. Then $M_1\times M_2$ is proper
biharmonic in $\mathbb{S}^n$ if and only if
\begin{equation*}
\left\{
\begin{array}{ll}
m_1\neq m_2,\quad\textnormal{or}\quad |\tau(\phi_1)|>0\\
\tau_2(\phi_1)+2(m_2-m_1)\tau(\phi_1)=0\\
\tau_2(\phi_2)-2(m_2-m_1)\tau(\phi_2)=0\\
|\tau(\phi_1)|=|\tau(\phi_2)|=\cst,
\end{array}
\right.
\end{equation*}
where $\phi_1:M_1\to \mathbb{S}^{n_1}(1/\sqrt 2)$ and
$\phi_2:M_2\to \mathbb{S}^{n_2}(1/\sqrt 2)$ are the associated isometric immersions.
\end{theorem}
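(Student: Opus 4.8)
The plan is to pass to $\mathbb{R}^{n+1}=\mathbb{R}^{n_1+1}\times\mathbb{R}^{n_2+1}$ and reduce everything to the two factor immersions. Put $m=m_1+m_2=\dim(M_1\times M_2)$ and $\kappa=m_2-m_1$, let $\iota_j:\mathbb{S}^{n_j}(1/\sqrt2)\hookrightarrow\mathbb{R}^{n_j+1}$ and $i:\mathbb{S}^n\hookrightarrow\mathbb{R}^{n+1}$ be the canonical inclusions, and set $\psi_j=\iota_j\circ\phi_j$ and $\psi=(\psi_1,\psi_2):M_1\times M_2\to\mathbb{R}^{n+1}$. Since $|\psi_1|^2+|\psi_2|^2=\tfrac12+\tfrac12=1$, the map $\psi$ takes values in $\mathbb{S}^n$ and equals $i\circ\phi$, where $\phi$ denotes the immersion of $M_1\times M_2$ in $\mathbb{S}^n$ whose biharmonicity we want to characterize. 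First I would compute $\tau(\phi)$: the composition formula $\tau(\iota_j\circ\phi_j)=d\iota_j(\tau(\phi_j))+\trace B^{\iota_j}(d\phi_j\cdot,d\phi_j\cdot)$ together with $B^{\iota_j}(X,Y)=-2\langle X,Y\rangle\psi_j$ (the second fundamental form of the sphere of radius $1/\sqrt2$) gives $\tau(\psi_j)=\tau(\phi_j)-2m_j\psi_j$, and the same computation for $i$ gives $\tau(\psi)=\tau(\phi)-m\psi$; comparing these via $\Delta\psi=-\tau(\psi)$, which on the product splits factorwise, one obtains
$$\tau(\phi)=\bigl(\tau(\phi_1)+\kappa\psi_1,\ \tau(\phi_2)-\kappa\psi_2\bigr),\qquad |\tau(\phi)|^2=|\tau(\phi_1)|^2+|\tau(\phi_2)|^2+\kappa^2,$$
using $\tau(\phi_j)\perp\psi_j$ and $|\psi_j|^2=\tfrac12$.

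Since $\phi$ is isometric, $\trace R^{\mathbb{S}^n}(d\phi\cdot,\tau(\phi))d\phi\cdot=-m\tau(\phi)$, so by \eqref{eq:Jiang} the map $\phi$ is biharmonic if and only if $\Delta^\phi\tau(\phi)=m\tau(\phi)$. To evaluate $\Delta^\phi\tau(\phi)$ I would return to $\mathbb{R}^{n+1}$: since $\langle d\phi(\cdot),\tau(\phi)\rangle\equiv0$, differentiating twice covariantly and tracing gives $\Delta^{\mathbb{R}^{n+1}}(di\,\tau(\phi))=\Delta^\phi\tau(\phi)-|\tau(\phi)|^2\psi$, and in exactly the same way on each factor $\Delta^{\mathbb{R}^{n_j+1}}(d\iota_j\,\tau(\phi_j))=\Delta^{\phi_j}\tau(\phi_j)-2|\tau(\phi_j)|^2\psi_j$; moreover $\Delta^{M_j}\psi_j=-\tau(\phi_j)+2m_j\psi_j$. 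As $di\,\tau(\phi)=(\tau(\phi_1)+\kappa\psi_1,\tau(\phi_2)-\kappa\psi_2)$ is $\mathbb{R}^{n+1}$-valued with each factor depending only on the corresponding variable, $\Delta^{\mathbb{R}^{n+1}}$ splits factorwise, and assembling the identities above turns the equation $\Delta^\phi\tau(\phi)=m\tau(\phi)$ into (on the first factor; the second is symmetric)
$$\Delta^{\phi_1}\tau(\phi_1)-\kappa\tau(\phi_1)+\bigl(2\kappa m_1-2|\tau(\phi_1)|^2+|\tau(\phi)|^2\bigr)\psi_1=m\tau(\phi_1)+m\kappa\,\psi_1.$$

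Next I would project onto $T\mathbb{S}^{n_j}(1/\sqrt2)$ and onto $\mathbb{R}\psi_j$. The tangential parts yield $\Delta^{\phi_1}\tau(\phi_1)=(m+\kappa)\tau(\phi_1)=2m_2\tau(\phi_1)$ and $\Delta^{\phi_2}\tau(\phi_2)=(m-\kappa)\tau(\phi_2)=2m_1\tau(\phi_2)$; since on $\mathbb{S}^{n_j}(1/\sqrt2)$ one has $\tau_2(\phi_j)=-\Delta^{\phi_j}\tau(\phi_j)+2m_j\tau(\phi_j)$, these are exactly $\tau_2(\phi_1)+2(m_2-m_1)\tau(\phi_1)=0$ and $\tau_2(\phi_2)-2(m_2-m_1)\tau(\phi_2)=0$, the second and third conditions. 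The $\psi_j$-parts, using the numerical identity $m-2m_1-\kappa=0$, both collapse to $|\tau(\phi_1)|^2=|\tau(\phi_2)|^2$ as functions on $M_1\times M_2$; but the left-hand side depends only on the point of $M_1$ and the right-hand side only on that of $M_2$, so both equal one and the same constant --- this gives the fourth condition, and the chain of equivalences is plainly reversible. Finally, since $|\tau(\phi)|^2=2|\tau(\phi_1)|^2+\kappa^2$ is then constant, $\phi$ is proper (i.e. non-minimal) if and only if this constant is positive, that is, if and only if $m_1\neq m_2$ or $|\tau(\phi_1)|>0$; this is the first condition, which completes the proof.

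The computational heart --- and the only step requiring real care --- is the second-order identity for $\Delta^\phi\tau(\phi)$ through the two nested sphere inclusions together with the correct separation of $T\mathbb{S}^{n_j}(1/\sqrt2)$- versus $\psi_j$-components; everything else is bookkeeping, the pleasant point being that the constancy of $|\tau(\phi_j)|$ need not be assumed but is forced by the product structure.
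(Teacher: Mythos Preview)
Your argument is correct in every detail: the factorwise computation of $\tau(\phi)$, the passage through $\mathbb{R}^{n+1}$ via $\bar\Delta(di\,\tau(\phi))=\Delta^\phi\tau(\phi)-|\tau(\phi)|^2\psi$ (and its radius-$1/\sqrt2$ analogue), the splitting into $T\mathbb{S}^{n_j}(1/\sqrt2)$- and $\psi_j$-components, the numerical identity $2\kappa m_1+\kappa^2-m\kappa=0$ that collapses the normal parts to $|\tau(\phi_1)|^2=|\tau(\phi_2)|^2$, and the product-structure argument forcing constancy all check out. The present paper is a survey and does not supply its own proof of this statement, merely citing \cite{BO} (with a remark correcting a small inaccuracy from \cite{CMO}); your approach is precisely the standard one used in those sources, so there is nothing further to compare.
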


\begin{remark} We note that here we also correct a small inaccuracy in the original version of the theorem (see \cite{CMO}).
\end{remark}

\begin{corollary}[\cite{CMO}]
Let $M^{m_1}_1$ and $M^{m_2}_2$ be two minimal submanifolds in the spheres $\mathbb{S}^{n_1}(1/\sqrt{2})$ and $\mathbb{S}^{n_2}(1/\sqrt{2})$, respectively, with $n_1+n_2=n$. Then the product $M_1\times M_2$ is a proper biharmonic submanifold of $\mathbb{S}^{n+1}$ if and only if $m_1\neq m_2$.
\end{corollary}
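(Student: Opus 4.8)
The plan is to obtain this as an immediate specialization of the preceding theorem of \cite{BO}. First I would fix the book-keeping: applying that theorem with its ambient sphere $\mathbb{S}^{n}$ relabelled as $\mathbb{S}^{n+1}$ turns the constraint $n_1+n_2=n-1$ into $n_1+n_2=n$, which is exactly the hypothesis here, and the range conditions $0\le m_1\le n_1$, $0\le m_2\le n_2$ hold automatically for immersed submanifolds. Thus $M_1\times M_2$, realized (via $\phi_1\times\phi_2$) inside $\mathbb{S}^{n_1}(1/\sqrt2)\times\mathbb{S}^{n_2}(1/\sqrt2)\subset\mathbb{S}^{n+1}$ — the generalized Clifford torus, which carries the Riemannian product metric since $(1/\sqrt2)^2+(1/\sqrt2)^2=1$ — is proper biharmonic in $\mathbb{S}^{n+1}$ if and only if the four conditions of that theorem are satisfied by the associated immersions $\phi_1:M_1\to\mathbb{S}^{n_1}(1/\sqrt2)$ and $\phi_2:M_2\to\mathbb{S}^{n_2}(1/\sqrt2)$.

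The second step is to collapse those conditions using minimality. Minimality of $\phi_i$ in $\mathbb{S}^{n_i}(1/\sqrt2)$ means $\tau(\phi_i)=m_iH_i=0$, and then the expression for the bitension field forces $\tau_2(\phi_i)=-\Delta\tau(\phi_i)-\trace R^{\mathbb{S}^{n_i}(1/\sqrt2)}(d\phi_i,\tau(\phi_i))d\phi_i=0$ as well. Substituting $\tau(\phi_i)=0$ and $\tau_2(\phi_i)=0$, the equations $\tau_2(\phi_1)+2(m_2-m_1)\tau(\phi_1)=0$, $\tau_2(\phi_2)-2(m_2-m_1)\tau(\phi_2)=0$ and $|\tau(\phi_1)|=|\tau(\phi_2)|=\cst$ hold identically, while the dichotomy ``$m_1\neq m_2$ or $|\tau(\phi_1)|>0$'' reduces, since $|\tau(\phi_1)|=0$, to the bare requirement $m_1\neq m_2$. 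Hence the theorem yields precisely: $M_1\times M_2$ is proper biharmonic in $\mathbb{S}^{n+1}$ if and only if $m_1\neq m_2$.

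I do not expect a genuine obstacle here; the only delicate points are the index shift from $\mathbb{S}^{n}$ to $\mathbb{S}^{n+1}$ and reading ``minimal submanifold of $\mathbb{S}^{n_i}(1/\sqrt2)$'' as the vanishing of the tension field of $\phi_i$ into that sphere, which is the quantity the quoted theorem refers to. For orientation one may also record the geometry of the excluded case: averaging the second fundamental form of the Clifford torus $\mathbb{S}^{n_1}(1/\sqrt2)\times\mathbb{S}^{n_2}(1/\sqrt2)$ in $\mathbb{S}^{n+1}$ over the tangent space of $M_1\times M_2$ (which occupies $m_1$ directions in the first factor and $m_2$ in the second) produces a mean curvature proportional to $m_2-m_1$, so when $m_1=m_2$ the product is in fact minimal in $\mathbb{S}^{n+1}$ — biharmonic, but not proper — in agreement with the statement.
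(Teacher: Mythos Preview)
Your proposal is correct and follows exactly the route the paper intends: the corollary is obtained as an immediate specialization of the preceding theorem of \cite{BO}, and you have carried out the index shift and the collapse of the four conditions under minimality accurately. The additional remark on the mean curvature of the product when $m_1=m_2$ is a helpful sanity check but not needed for the argument.
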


\subsection{Characterization formulas for biconservative submanifolds}

Since a submanifold is biconservative if and only if the tangent part of its bitension field vanishes, from the splitting of the biharmonic equation and looking only to the tangent part, one obtains formulas characterizing the biconservativity.

\begin{proposition}
Let $M^m$ be a submanifold in a Riemannian manifold $N^n$. Then the following properties are equivalent:
\begin{itemize}
    \item [(i)] $M$ is biconservative;
    \item [(ii)] $\trace A_{\nabla^\perp_{(\cdot)} H}(\cdot)+\trace(\nabla A_H)(\cdot,\cdot) +\trace \left( R^N(\cdot,H)\cdot\right)^\top=0$;
    \item [(iii)] $4\trace A_{\nabla^\perp_{(\cdot)} H}(\cdot) + m\grad\left(|H|^2\right)+4\trace \left(R^N(\cdot,H)\cdot\right)^\top=0$;
    \item [(iv)] $4\trace(\nabla A_H)(\cdot,\cdot)-m\grad\left(|H|^2\right)=0$.
\end{itemize}
\end{proposition}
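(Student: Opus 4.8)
The plan is to derive all four characterizations from the general splitting of the biharmonic equation (the last displayed system in the subsection on characterization formulas), since by definition biconservativity is precisely the vanishing of the tangential part $\tau_2(\phi)^{\top}$, and the divergence identity $(\Div S_2)^{\sharp}=-\tau_2(\phi)^{\top}$ noted earlier confirms this. First I would recall from that theorem that the tangential component of the bitension field is
$$
\tau_2(\phi)^{\top}=-4\trace A_{\nabla^{\perp}_{(\cdot)}H}(\cdot)-m\grad(|H|^2)-4\trace(R^N(\cdot,H)\cdot)^{\top},
$$
so that $M$ is biconservative if and only if this expression vanishes, which is exactly (iii). Thus the equivalence (i) $\Leftrightarrow$ (iii) is immediate, and it only remains to show that (ii), (iii), (iv) are mutually equivalent as algebraic-differential identities.

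Next I would establish (ii) $\Leftrightarrow$ (iii). The key computational ingredient is the identity relating the three traces appearing in the normal-bundle Codazzi-type formula. Specifically, for a submanifold one has
$$
4\trace(\nabla A_H)(\cdot,\cdot)=m\grad(|H|^2)+4\trace A_{\nabla^{\perp}_{(\cdot)}H}(\cdot)+4\trace(R^N(\cdot,H)\cdot)^{\top},
$$
which is precisely the "moreover" part of the general splitting theorem, obtained by contracting the Codazzi equation for the ambient space. Using this to substitute for $4\trace(\nabla A_H)(\cdot,\cdot)$, one checks that the left-hand side of (ii) multiplied by $4$ equals the left-hand side of (iii), so the two vanish simultaneously. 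The same substitution, applied in the other direction, turns (iii) into (iv): replacing $4\trace A_{\nabla^{\perp}_{(\cdot)}H}(\cdot)+4\trace(R^N(\cdot,H)\cdot)^{\top}$ in (iii) by $4\trace(\nabla A_H)(\cdot,\cdot)-m\grad(|H|^2)$ yields exactly $4\trace(\nabla A_H)(\cdot,\cdot)-m\grad(|H|^2)=0$, which is (iv).

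The main obstacle, such as it is, is bookkeeping rather than depth: one must be careful that the three "trace" expressions are contracted over the same orthonormal frame and that the sign conventions for $R^N$, for $\Delta^{\perp}$, and for the shape operator are the ones fixed in the Conventions section, so that the tangential part of $\tau_2(\phi)$ carries the signs displayed above. Once the contracted Codazzi identity is in hand, everything is a one-line algebraic rearrangement, so I would present the proof as: (a) quote the splitting theorem to identify $\tau_2(\phi)^{\top}$ and get (i) $\Leftrightarrow$ (iii) for free; (b) quote the contracted Codazzi identity $4\trace(\nabla A_H)(\cdot,\cdot)=m\grad(|H|^2)+4\trace A_{\nabla^{\perp}_{(\cdot)}H}(\cdot)+4\trace(R^N(\cdot,H)\cdot)^{\top}$; (c) substitute to pass between (ii), (iii), (iv). No compactness, no ellipticity, and no curvature assumptions on $N$ are needed.
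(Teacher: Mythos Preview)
Your overall strategy is exactly what the paper intends: it states the proposition immediately after remarking that ``from the splitting of the biharmonic equation and looking only to the tangent part, one obtains formulas characterizing the biconservativity,'' and gives no further proof. So (i)$\Leftrightarrow$(iii) directly from the splitting theorem, and (i)$\Leftrightarrow$(iv) from its ``moreover'' clause, is precisely the intended argument; (ii) then follows by combining the two.

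There is, however, a coefficient slip in your key identity. The ``moreover'' clause does not assert your displayed identity; it asserts that the \emph{same} tangential component $-\tau_2(\phi)^{\top}$ equals both the left-hand side of (iii) and the left-hand side of (iv). Equating those two gives
\[
4\trace(\nabla A_H)(\cdot,\cdot)=2m\grad\left(|H|^2\right)+4\trace A_{\nabla^{\perp}_{(\cdot)}H}(\cdot)+4\trace\left(R^N(\cdot,H)\cdot\right)^{\top},
\]
with $2m$, not $m$ (this is also what one obtains by contracting Codazzi, since $\sum_i\langle(\nabla_{e_i}B)(e_i,X),H\rangle=\tfrac{m}{2}X(|H|^2)+\langle\trace(R^N(\cdot,H)\cdot)^{\top},X\rangle$). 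With the correct coefficient your substitution yields $2\times(\text{LHS of (ii)})=\text{LHS of (iii)}=\text{LHS of (iv)}$ as identities, so all three vanish simultaneously; with your stated coefficient the claimed equality ``$4\times$(ii) $=$ (iii)'' fails. This is a bookkeeping error of exactly the sort you flagged, not a gap in the method.
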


From this proposition, we have the following consequences, some of which are quite straightforward.

\begin{proposition}\label{p232}
Let $M^m$ be a submanifold of a Riemannian manifold $N^n$. If $\nabla A_H=0$, then $M$ is biconservative.
\end{proposition}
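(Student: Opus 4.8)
The statement to prove is Proposition~\ref{p232}: if $\nabla A_H = 0$ for a submanifold $M^m \hookrightarrow N^n$, then $M$ is biconservative. The plan is to use the characterization in the immediately preceding proposition, specifically criterion~(ii): $M$ is biconservative if and only if
$$
\trace A_{\nabla^\perp_{(\cdot)} H}(\cdot) + \trace(\nabla A_H)(\cdot,\cdot) + \trace\left(R^N(\cdot,H)\cdot\right)^\top = 0.
$$
The middle term vanishes outright by hypothesis, so the task reduces to showing that the sum of the remaining two terms is zero whenever $\nabla A_H = 0$.

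First I would observe that the hypothesis $\nabla A_H = 0$ forces strong structure on the normal connection acting on $H$. Indeed, one expects $\nabla A_H = 0$ to imply, via the Codazzi equation, that $H$ is parallel in the normal bundle, i.e.\ $\nabla^\perp H = 0$, at least modulo the curvature term of $N$. More precisely, the Codazzi equation relates $(\nabla A_H)(X,Y) - (\nabla A_H)(Y,X)$, the term $A_{\nabla^\perp_X H}Y - A_{\nabla^\perp_Y H}X$, and the tangential part of $R^N$. If $\nabla A_H = 0$ then the left-hand side vanishes, which should let me control $A_{\nabla^\perp_{(\cdot)}H}$ in terms of the ambient curvature. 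The cleanest route, however, may be to avoid Codazzi entirely: since $\nabla A_H = 0$ means $A_H$ is a parallel $(1,1)$-tensor, $A_H$ is in particular a Codazzi tensor and divergence-free, and then criterion~(iv), $4\trace(\nabla A_H)(\cdot,\cdot) - m\grad(|H|^2) = 0$, gives that $|H|^2$ is constant (its gradient vanishes). With $|H|$ constant, criterion~(iii), $4\trace A_{\nabla^\perp_{(\cdot)}H}(\cdot) + m\grad(|H|^2) + 4\trace(R^N(\cdot,H)\cdot)^\top = 0$, is not yet immediate, so I still need the interplay of the remaining two traces.

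The honest approach, then, is: from $\nabla A_H = 0$ deduce $\nabla^\perp H = 0$. To see this, note that for any tangent vectors $X, Y$ one has $\langle \nabla^\perp_X H, \text{(normal)}\rangle$ controlled by differentiating $|H|^2$ and by the structure of the tensor $A_H$; alternatively, if $H$ vanishes nowhere, write $H = |H|\,\xi$ for a unit normal field $\xi$, and $\nabla A_H = |H|\,\nabla A_\xi + (\text{terms in } \grad|H|)$, so $\nabla A_\xi = 0$ as well, and then $0 = X\langle A_\xi Y, Z\rangle$-type computations combined with $\langle \nabla^\perp_X \xi, \xi\rangle = 0$ pin down $\nabla^\perp \xi = 0$. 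Once $\nabla^\perp H = 0$, the term $\trace A_{\nabla^\perp_{(\cdot)}H}(\cdot)$ vanishes, and the third term $\trace(R^N(\cdot,H)\cdot)^\top$ must then also be handled — but a parallel mean curvature vector together with $\nabla A_H = 0$ should force this tangential curvature trace to vanish by a symmetry argument (it is the tangential part of $\sum_i R^N(e_i, H)e_i$, and pairing with a tangent vector $X$ gives $\sum_i \langle R^N(e_i, H)e_i, X\rangle$, which one shows is zero using the Ricci-type identity that comes from $\nabla^2 H = 0$).

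I expect the main obstacle to be the step $\nabla A_H = 0 \Rightarrow \nabla^\perp H = 0$ (and the subsequent vanishing of the ambient-curvature trace term): this is where the hypothesis has to be genuinely exploited, handling carefully the locus where $H$ vanishes versus where it does not, and where one must invoke the Codazzi equation of $M$ in $N$ to convert the vanishing of the symmetric tensor $\nabla A_H$ into information about $\nabla^\perp H$. Everything else — substituting into criterion~(ii) and collecting terms — is routine. If a fully self-contained curvature computation proves cumbersome, an acceptable alternative is simply to remark that $\nabla A_H = 0$ makes $A_H$ a parallel, hence trivially divergence-free, symmetric $(1,1)$-tensor, so $\Div A_H = 0$ and $\grad(|H|^2) = 0$; then by criterion~(iii) of the characterization proposition the remaining identity reduces to $\trace A_{\nabla^\perp_{(\cdot)}H}(\cdot) + \trace(R^N(\cdot,H)\cdot)^\top = 0$, which is exactly the content of criterion~(ii) under $\nabla A_H = 0$ — so the statement is in fact immediate from part~(ii) once one notes that a parallel tensor has vanishing covariant-derivative trace. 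I would present this last, short argument as the proof, mentioning the structural consequence $\nabla^\perp H = 0$ as a remark.
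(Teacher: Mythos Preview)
Your proposal has a genuine logical gap: you treat criteria (ii)--(iv) of the characterization proposition as if they were general identities holding for all submanifolds, but they are each \emph{equivalent} to biconservativity --- they are the goal, not tools. When you write that ``criterion~(iv) \ldots\ gives that $|H|^2$ is constant'', you are assuming what you want to prove. Likewise, your final ``short argument'' only shows that, under the hypothesis, the reduced form of~(iii) coincides with the reduced form of~(ii); observing that two equivalent conditions reduce to the same expression does not verify either of them. So nowhere in the proposal is any one of (ii)--(iv) actually established.

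The fix is immediate and is what the paper has in mind when it calls the consequence ``straightforward'': work directly with criterion~(iv). Since $A_H$ is parallel, the trace $\trace A_H = m|H|^2$ is constant (trace commutes with $\nabla$), so $\grad(|H|^2)=0$; and trivially $\trace(\nabla A_H)(\cdot,\cdot)=0$. Both terms in~(iv) vanish, hence~(iv) holds, hence $M$ is biconservative. The whole detour through $\nabla^\perp H = 0$, the Codazzi equation, and the tangential ambient-curvature trace is unnecessary --- and the implication $\nabla A_H = 0 \Rightarrow \nabla^\perp H = 0$ that you propose to prove is itself not obvious in arbitrary codimension and ambient curvature, so that route would create more work, not less.
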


\begin{remark} A converse of Proposition \ref{p232} is given by Theorem \ref{t222}.
\end{remark}

As we have already mentioned, we have the following consequence.

\begin{proposition}
Let $M^m$ be a submanifold of a Riemannian manifold $N^n$. Assume that $N$ is a space form and $M$ is $PMC$. Then $M$ is biconservative.
\end{proposition}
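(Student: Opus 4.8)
The plan is to verify item (iii) of the preceding proposition directly, using the $PMC$ hypothesis to kill the first two terms and the special form of the curvature tensor of a space form to kill the last one. Recall that $PMC$ means $\nabla^\perp H=0$; this has two immediate consequences. First, $\nabla^\perp_X H=0$ for every $X\in C(TM)$, so $A_{\nabla^\perp_{(\cdot)}H}(\cdot)\equiv 0$ and hence $\trace A_{\nabla^\perp_{(\cdot)}H}(\cdot)=0$. Second, $|H|^2=\langle H,H\rangle$ is constant on $M$, so $\grad(|H|^2)=0$.

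It remains to show that $\trace\left(R^N(\cdot,H)\cdot\right)^\top=0$. Here I would use that $N=N^n(c)$ is a space form, so that $R^N(X,Y)Z=c\left(\langle Y,Z\rangle X-\langle X,Z\rangle Y\right)$. Choosing a local orthonormal frame $\{e_i\}_{i=1}^{m}$ tangent to $M$ and summing over $i$, one gets $\trace R^N(\cdot,H)\cdot=\sum_i R^N(e_i,H)e_i=c\sum_i\left(\langle H,e_i\rangle e_i-\langle e_i,e_i\rangle H\right)=-mc\,H$, where the term $\sum_i\langle H,e_i\rangle e_i$ vanishes because $H$ is normal to $M$. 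Thus $\trace R^N(\cdot,H)\cdot$ is a normal vector field, so its tangent part is zero. Collecting the three vanishings, the identity in (iii) holds, and therefore $M$ is biconservative.

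Since this is essentially a one-line computation once the characterization in item (iii) is available, I do not expect any real obstacle. The only place where the hypotheses are genuinely used is the vanishing of the curvature term, which requires both the space-form assumption (to write the curvature tensor explicitly) and the fact that $H$ is orthogonal to $TM$. An alternative would be to invoke item (iv) together with the space-form expression for the tangent part of the bitension field, but the argument through item (iii) is the most direct.
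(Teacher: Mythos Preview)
Your proof is correct and is exactly the intended one: the paper states this proposition as an immediate consequence of the characterization (items (i)--(iv)) and gives no further argument, so verifying (iii) term by term as you do is precisely the implied proof. There is nothing to add.
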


\begin{proposition}[\cite{BMO13}]
Let $M^m$ be a pseudo-umbilical submanifold of a Riemannian manifold $N^n$ with $m\neq 4$. Then $M$ is biconservative if and only if it is a CMC submanifold.
\end{proposition}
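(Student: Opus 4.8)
The plan is to work directly with the characterization of biconservativity in part (iii) of the preceding Proposition, specialized to a pseudo-umbilical submanifold in a space form. Recall that pseudo-umbilical means $A_H = |H|^2 I$, i.e.\ the shape operator in the direction of the mean curvature vector is a multiple of the identity, with the multiple being exactly $|H|^2$ (this is the normalization forced by $\langle A_H X, Y\rangle = \langle B(X,Y), H\rangle$ and tracing). First I would compute the two ingredients appearing in (iii). Since $N$ need only be an arbitrary Riemannian manifold here — wait, the statement as given allows general $N$, so I should use part (ii) or (iii) with the curvature term retained; but the cleanest route is part (iv), $4\trace(\nabla A_H)(\cdot,\cdot) - m\grad(|H|^2) = 0$, which has no ambient curvature term at all. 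So I would use (iv).

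The key computation is then: with $A_H = |H|^2 I$ one has $(\nabla_X A_H)(Y) = X(|H|^2)\, Y$, hence $\trace(\nabla A_H)(\cdot,\cdot) = \sum_i (\nabla_{e_i} A_H)(e_i) = \sum_i e_i(|H|^2)\, e_i = \grad(|H|^2)$. Substituting into (iv), the biconservativity condition becomes
$$
4\,\grad(|H|^2) - m\,\grad(|H|^2) = (4-m)\,\grad(|H|^2) = 0.
$$
Therefore, when $m \neq 4$, biconservativity is equivalent to $\grad(|H|^2) = 0$, i.e.\ $|H|^2$ is constant, i.e.\ $M$ is CMC. Conversely, if $M$ is CMC then $\grad(|H|^2)=0$ and (iv) holds trivially, so $M$ is biconservative — and this direction needs no hypothesis on $m$. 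Assembling the two implications gives the stated equivalence under the assumption $m\neq 4$.

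There is essentially no obstacle here: the only point requiring a moment's care is verifying the identity $A_H = |H|^2 I$ from the definition of pseudo-umbilicity (some authors write $A_H = \lambda I$ and then separately observe $\lambda = |H|^2$), and checking that $\nabla A_H$ is computed as a derivation so that the scalar $|H|^2$ differentiates out cleanly while the identity endomorphism is parallel. The role of the dimensional restriction $m\neq 4$ is exactly to make the coefficient $4-m$ nonzero; this is the same exceptional dimension that appears in parts (ii) and (iv) of the Proposition on the stress-bienergy tensor (where $\trace S_2$ and the $|H|^4$ coefficient degenerate near $m=4$), and it is worth a remark that for $m=4$ every pseudo-umbilical submanifold of a space form is automatically biconservative regardless of whether $|H|$ is constant.
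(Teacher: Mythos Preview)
Your proof is correct. The paper itself does not supply a proof of this proposition (it is a survey, and the result is simply attributed to \cite{BMO13}), so there is nothing to compare against; but your argument via characterization (iv), $4\trace(\nabla A_H)(\cdot,\cdot)-m\grad(|H|^2)=0$, is exactly the natural one and would be the expected proof. The computation $(\nabla_X A_H)Y = X(|H|^2)\,Y$ from $A_H=|H|^2 I$, leading to $(4-m)\grad(|H|^2)=0$, is clean and valid in an arbitrary ambient $N$, precisely because (iv) contains no curvature term. One tiny cosmetic point: in your closing remark you restrict to ``a space form'' when noting that every pseudo-umbilical $M^4$ is automatically biconservative, but your own argument shows this holds in any Riemannian $N$.
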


If the particular case of hypersurfaces in space forms, we have the following result.

\begin{proposition}
A hypersurface $M^m$ in a space form $N^{m+1}(c)$ is biconservative if and only if
\begin{equation}\label{eq:bicons}
A(\grad f)=-\frac{m}{2}f\grad f.
\end{equation}
\end{proposition}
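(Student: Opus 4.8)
The plan is to specialize the general biconservativity characterization (part (iv) of the previous proposition, or equivalently part (iii)) to the hypersurface setting and chase the definitions. First I would recall that for a hypersurface $M^m$ in $N^{m+1}(c)$ with unit normal $\eta$, the mean curvature vector field is $H=f\eta$ where $f=(1/m)\trace A$ and $A=A_\eta$, so that $|H|^2=f^2$ and the shape operator in the direction of $H$ is $A_H=fA$. The ambient space being a space form, the curvature term $\trace(R^N(\cdot,H)\cdot)^{\top}$ vanishes identically (the curvature tensor of $N^{m+1}(c)$ applied to a tangent vector, the normal $H$, and a tangent vector produces a vector proportional to $\langle X, H\rangle$-type contractions, whose tangential part is zero since $H$ is normal). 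Also, since the normal bundle is a line bundle spanned by the parallel-length section $\eta$, one has $\nabla^{\perp}H=(Xf)\eta$ for $X\in TM$, hence $A_{\nabla^{\perp}_X H}=A_{(Xf)\eta}=(Xf)A$.

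Next I would substitute these into the equation $4\trace A_{\nabla^{\perp}_{(\cdot)}H}(\cdot)+m\grad(|H|^2)+4\trace(R^N(\cdot,H)\cdot)^{\top}=0$. The middle term becomes $m\grad(f^2)=2mf\grad f$. For the first term, choosing a local orthonormal frame $\{e_i\}$, we have $\trace A_{\nabla^{\perp}_{(\cdot)}H}(\cdot)=\sum_i A_{\nabla^{\perp}_{e_i}H}e_i=\sum_i (e_i f)A e_i=A\big(\sum_i(e_i f)e_i\big)=A(\grad f)$. Thus the biconservativity condition reduces to
\begin{equation*}
4A(\grad f)+2mf\grad f=0,
\end{equation*}
which is exactly \eqref{eq:bicons} after dividing by $4$. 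I would also remark that the same equation is obtained, perhaps more directly, from the tangent part of the biharmonic equation in the Corollary above, namely $2A(\grad f)+mf\grad f=0$, since that second equation is precisely the vanishing of $\tau_2(\phi)^{\top}$ for hypersurfaces in $N^{m+1}(c)$, and biconservativity is by definition equivalent to $\tau_2(\phi)^{\top}=0$.

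The converse direction is immediate by reversing the computation: if $A(\grad f)=-\frac{m}{2}f\grad f$ holds, then retracing the substitutions shows $\trace(\nabla A_H)(\cdot,\cdot)$ (equivalently the expression in (iv)) vanishes, so $M$ is biconservative. There is essentially no serious obstacle here; the only point requiring a little care is the bookkeeping for the curvature term and the identification $A_H=fA$, $\nabla^{\perp}H=df\otimes\eta$ in the codimension-one case, together with being consistent about the normalization constant between $|H|^2=f^2$ and the factor $m$. Since all of this is already packaged in the earlier theorems, the proof is a short specialization argument.
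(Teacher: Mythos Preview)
Your proposal is correct and follows essentially the same approach as the paper: the proposition is presented there as a direct specialization of the general biconservativity characterization (part (iii) of the preceding proposition, with the curvature term vanishing in a space form) to the codimension-one case, and you have spelled out precisely this computation. Your additional remark that the equation can be read off immediately from the tangent part of the biharmonic equation in the earlier Corollary is also exactly in the spirit of the paper's presentation.
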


\begin{corollary}
Any $CMC$ hypersurface in $N^{m+1}(c)$ is biconservative.
\end{corollary}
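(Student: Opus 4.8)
The plan is to deduce the corollary directly from the preceding proposition by specializing to a CMC hypersurface, i.e.\ one with $f=\cst$, so that $\grad f=0$ identically.

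First I would recall that, by the proposition just above, a hypersurface $M^m$ in $N^{m+1}(c)$ is biconservative if and only if the characterization equation $A(\grad f)=-\tfrac{m}{2}f\grad f$ holds at every point. The task then reduces to verifying that this equation is automatically satisfied under the CMC hypothesis.

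Next I would observe that if $M$ has constant mean curvature then $f$ is a constant function on $M$, hence $\grad f=0$. Substituting $\grad f=0$ into the left-hand side gives $A(\grad f)=A(0)=0$, since the shape operator $A$ is linear on each tangent space; and the right-hand side is $-\tfrac{m}{2}f\cdot 0=0$. Thus both sides vanish, so the equation $A(\grad f)=-\tfrac{m}{2}f\grad f$ holds trivially at every point of $M$.

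Finally, applying the proposition in the reverse direction, the validity of this equation everywhere implies that $M$ is biconservative, which is the desired conclusion. There is essentially no obstacle here: the only thing to check is that ``CMC'' is being used in the sense $f=(1/m)\trace A=\cst$ (as defined just before the relevant corollary in the hypersurface setting), so that $\grad f=0$; once that is granted, the argument is immediate. The statement is, in effect, the simplest consequence of the biconservativity characterization for hypersurfaces, and the proof is a one-line substitution.
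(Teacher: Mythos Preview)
Your proof is correct and matches the paper's approach: the corollary is stated as an immediate consequence of the preceding proposition, and your argument is precisely the trivial substitution $\grad f=0$ into equation \eqref{eq:bicons} that the word ``corollary'' signals.
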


\begin{corollary}[\cite{SN-Thesis}]
Let $M^m$ be a biconservative hypersurface in $N^{m+1}(c)$. Then
$$
mf\Delta f-3m|\grad f|^2-2\langle A,\Hess f\rangle=0.
$$
\end{corollary}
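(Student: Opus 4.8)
The plan is to derive the identity by computing the divergence of the vector field in the biconservativity equation \eqref{eq:bicons} in two different ways. By the previous proposition, biconservativity of $M^m$ in $N^{m+1}(c)$ means $A(\grad f)=-\tfrac{m}{2}f\grad f$, so these two vector fields agree on $M$ and hence so do their divergences. Note that no compactness is required: the whole argument is pointwise, and the divergence theorem is never invoked.

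First I would compute the divergence of the right-hand side. Using $\Div(fX)=f\Div X+\langle\grad f,X\rangle$ together with the sign convention $\Div\grad f=-\Delta f$ adopted in this paper (since $\Delta=-\trace\nabla^2$), one gets
$$
\Div\Big(-\tfrac{m}{2}f\grad f\Big)=-\tfrac{m}{2}\big(|\grad f|^2-f\Delta f\big).
$$
Next I would compute the divergence of $A(\grad f)$. Fixing a local orthonormal frame $\{e_i\}_{i=1}^{m}$ on $M$,
$$
\Div\big(A(\grad f)\big)=\sum_i\big\langle(\nabla_{e_i}A)(\grad f),e_i\big\rangle+\sum_i\big\langle A(\nabla_{e_i}\grad f),e_i\big\rangle.
$$
For the second sum, using that $A$ is self-adjoint and that $\langle\nabla_{e_i}\grad f,e_j\rangle=\Hess f(e_i,e_j)$ is symmetric, one obtains $\sum_i\langle A(\nabla_{e_i}\grad f),e_i\rangle=\langle A,\Hess f\rangle$.

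For the first sum, the essential point is that $M$ is a hypersurface in a space form: the tangential part of $R^N(\cdot,\cdot)\eta$ vanishes, so the Codazzi equation reduces to $(\nabla_XA)Y=(\nabla_YA)X$, i.e. $A$ is a Codazzi tensor. From this it follows in the standard way that $\sum_i(\nabla_{e_i}A)e_i=\grad(\trace A)=m\grad f$, since $\trace A=mf$. Hence
$$
\sum_i\big\langle(\nabla_{e_i}A)(\grad f),e_i\big\rangle=\Big\langle\grad f,\sum_i(\nabla_{e_i}A)e_i\Big\rangle=m|\grad f|^2,
$$
so that $\Div\big(A(\grad f)\big)=m|\grad f|^2+\langle A,\Hess f\rangle$.

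Equating the two expressions for the divergence gives
$$
m|\grad f|^2+\langle A,\Hess f\rangle=-\tfrac{m}{2}|\grad f|^2+\tfrac{m}{2}f\Delta f,
$$
and multiplying by $2$ and rearranging yields $mf\Delta f-3m|\grad f|^2-2\langle A,\Hess f\rangle=0$, as claimed. The only mildly delicate step is the one involving $\nabla A$: for a hypersurface in a general Riemannian manifold there would be an extra term $\trace\big(R^N(\cdot,\grad f)\eta\big)^{\top}$, and it is precisely the space-form hypothesis that turns $A$ into a Codazzi tensor and lets one replace $\sum_i(\nabla_{e_i}A)e_i$ by $\grad(\trace A)=m\grad f$. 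I do not anticipate any other obstacle.
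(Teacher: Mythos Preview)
Your proof is correct and is precisely the natural derivation: take the divergence of both sides of the biconservativity equation $A(\grad f)=-\tfrac{m}{2}f\grad f$, expand $\Div(A(\grad f))$ using the symmetry of $\nabla A$ and the Codazzi equation (valid because $N$ is a space form), and use the sign convention $\Delta=-\trace\nabla^2$. The paper itself does not supply a proof of this corollary, citing \cite{SN-Thesis} instead, but your argument is exactly the expected one and matches the line of reasoning implicit in the surrounding text.
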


\begin{remark} In \cite{SN-Thesis}, there was used a different definition for the mean curvature function $f$, i.e., there $f=\trace A$ instead of $f=(1/m)\trace A$.
\end{remark}

Moreover, when dealing with biconservative hypersurfaces in space forms, the two distributions determined by $\grad f$ are completely integrable, as showed by the next theorem.

\begin{theorem}[\cite{SN-Thesis}]
Let $M^m$ be a biconservative hypersurface in $N^{m+1}(c)$ with $\grad f\neq 0$ at any point of $M$. Then the distribution $\mathcal{D}$, orthogonal to that determined by $\grad f$, is completely integrable. Moreover, any integral manifold of maximal dimension of $\mathcal{D}$ has flat normal connection as a submanifold in $N^{m+1}(c)$.
\end{theorem}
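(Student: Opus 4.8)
The plan is to work on the open set where $\grad f \neq 0$ and to extract information from the biconservativity equation \eqref{eq:bicons}, namely $A(\grad f) = -\tfrac{m}{2} f \grad f$. This says that $\grad f$ is, at each point, an eigenvector of the shape operator $A$ with eigenvalue $\lambda_1 := -\tfrac{m}{2} f$. Let $X_1 = \grad f / |\grad f|$ be the corresponding unit eigenvector field; the distribution $\mathcal{D}$ in the statement is the orthogonal complement $\{X_1\}^\perp$, and I expect it to coincide fiberwise with the sum of the remaining eigenspaces of $A$. The first step is therefore to record the eigenvalue structure: on $\mathcal{D}$ the operator $A$ has eigenvalues $\lambda_2, \dots$ (a priori varying), all distinct from $\lambda_1$ on a possibly smaller open dense set, and to note that $f$ is constant along $\mathcal{D}$ since $\grad f$ is orthogonal to $\mathcal{D}$.

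Next I would establish complete integrability of $\mathcal{D}$. The natural tool is the Codazzi equation for the hypersurface $M^m \subset N^{m+1}(c)$, which reads $(\nabla_X A)Y = (\nabla_Y A)X$. Taking $X, Y \in \mathcal{D}$ and pairing with $X_1$, I would compute $\langle (\nabla_X A)Y, X_1 \rangle$; expanding via $\langle A Y, X_1\rangle = \lambda_1 \langle Y, X_1\rangle = 0$ and the definition of the covariant derivative of $A$, this produces an expression of the form $(\lambda_1 - \lambda_j)\langle \nabla_X Y, X_1\rangle$ plus a term in $X(\lambda_1)\langle Y, X_1\rangle = 0$. Symmetrizing in $X$ and $Y$ and using the Codazzi symmetry, one concludes $\langle [X,Y], X_1 \rangle = \langle \nabla_X Y - \nabla_Y X, X_1\rangle = 0$ whenever $\lambda_1 \neq \lambda_j$, hence $[X,Y] \in \mathcal{D}$; the exceptional locus where eigenvalues collide is closed with empty interior, and integrability extends there by continuity (or one handles it directly since $X_1 = \grad f/|\grad f|$ is globally defined wherever $\grad f \neq 0$). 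This gives the Frobenius condition and the first assertion.

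For the second assertion — that a maximal integral manifold $M'$ of $\mathcal{D}$ has flat normal connection in $N^{m+1}(c)$ — the normal space of $M'$ inside $N^{m+1}(c)$ is spanned by the unit normal $\eta$ of $M$ and by $X_1$. I would compute the normal curvature $R^\perp(X,Y)$ for $X, Y \in \mathcal{D}$ acting on these two normal fields. Acting on $\eta$: since $M$ is a hypersurface of a space form, the normal connection of $M$ in $N$ is trivial, so only the $X_1$-component of $\nabla^{M'\text{-normal}}_X \eta$ matters, and this is governed by $\langle A X, X_1\rangle$-type terms which vanish because $A$ maps $\mathcal{D}$ into $\mathcal{D}$. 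Acting on $X_1$: I would use that $\nabla_X X_1$ for $X \in \mathcal{D}$ has its $\mathcal{D}$-component tangent to $M'$ (the second fundamental form of $M'$) and an $\eta$-component given by $\langle A X, X_1\rangle = 0$; the remaining computation of $R^\perp$ reduces, via the Gauss and Codazzi equations of the chain $M' \subset M \subset N^{m+1}(c)$ together with $X(\lambda_1) = X f \cdot(\text{const}) = 0$ for $X\in\mathcal{D}$, to an identity that closes. The main obstacle I anticipate is bookkeeping the three-step submanifold chain $M' \subset M \subset N$ cleanly — keeping track of which connection and which second fundamental form is which — and handling the locus where the eigenvalue $\lambda_1$ meets another eigenvalue of $A$, so that the Frobenius argument and the flatness argument are valid on all of $\{\grad f \neq 0\}$ and not merely on an open dense subset; a density-and-continuity argument should suffice for the latter.
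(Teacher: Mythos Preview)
The paper does not supply a proof of this theorem; it is quoted from \cite{SN-Thesis} without argument, so there is nothing in the paper to compare your outline against. I will comment on the proposal itself.

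Your Codazzi argument for integrability is both unnecessary and, as written, incomplete. It is unnecessary because $\mathcal{D}=\ker(df)$ is the annihilator of an exact $1$-form: for $X,Y\in\mathcal{D}$ one has $df([X,Y])=X(Yf)-Y(Xf)=0$, so $[X,Y]\in\mathcal{D}$ automatically, and the integral leaves are the level hypersurfaces of $f$. Biconservativity plays no role here. It is incomplete because your Codazzi computation, producing $(\lambda_1-\lambda_j)\langle\nabla_XY,X_1\rangle=(\lambda_1-\lambda_i)\langle\nabla_YX,X_1\rangle$ for eigenvectors $X,Y$ with eigenvalues $\lambda_i,\lambda_j$, only yields $\langle[X,Y],X_1\rangle=0$ when $\lambda_i=\lambda_j$; the mixed case is not addressed.

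For the flat normal connection your plan is on the right track but the phrase ``an identity that closes'' hides the actual content. The clean route is the Ricci equation in a space form: $\langle R^\perp(X,Y)\xi,\xi'\rangle=\langle[A^{M'}_\xi,A^{M'}_{\xi'}]X,Y\rangle$. Here $A^{M'}_\eta=A|_{\mathcal{D}}$ (this uses biconservativity, which guarantees $A$ preserves $\mathcal{D}$), and $A^{M'}_{X_1}X=-\nabla_XX_1=-|\grad f|^{-1}(\Hess f)(X,\cdot)^\sharp$ restricted to $\mathcal{D}$. Applying Codazzi to eigenvectors $X_j,X_k\in\mathcal{D}$ and pairing with $X_1$ gives $(\lambda_j-\lambda_k)\,\Hess f(X_j,X_k)=0$, so $A^{M'}_{X_1}$ diagonalises in the eigenbasis of $A|_{\mathcal{D}}$; the two shape operators commute and $R^\perp=0$. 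This is where biconservativity genuinely enters, and it is the step you should make explicit rather than leave as bookkeeping.
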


\begin{remark}
The last result actually extends the one obtained in \cite{HV95}, in the case of Euclidean space.
\end{remark}

\section{Surfaces in three dimensional space forms}

\subsection{Biharmonic surfaces}

The first result in this section is a non-existence result, that was proved in \cite{CI} and \cite{J2} when working in the Euclidean space, and then in \cite{CMO} in the case of space forms with negative constant sectional curvature.

\begin{theorem} Any biharmonic surface in a space form $N^3(c)$, with $c\leq 0$, is minimal.
\end{theorem}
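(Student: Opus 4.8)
The plan is to use the characterization of biharmonic hypersurfaces in space forms from the corollary above, applied to the case $m=2$. For a surface $M^2$ in $N^3(c)$ with mean curvature function $f$ and shape operator $A$, biharmonicity is equivalent to the system
$$
\begin{cases}
\Delta f+(|A|^2-2c)f=0,\\
2A(\grad f)+2f\grad f=0.
\end{cases}
$$
The strategy is to prove that $f$ must be constant; then the first equation forces $|A|^2=2c$ pointwise (wherever $f\neq0$), and since $c\leq 0$ while $|A|^2\geq 0$, this can only happen if $c=0$ and $A=0$, i.e.\ $M$ is totally geodesic, hence minimal; and if $f$ is constant and equal to zero we are already done. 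So everything reduces to showing $f$ is constant.

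First I would argue by contradiction: suppose $\grad f\neq 0$ on some open set $U$. On $U$ the second equation says $A(\grad f)=-f\grad f$, so $\grad f$ is an eigenvector of $A$ with eigenvalue $-f$. Since $A$ is symmetric and $M$ is a surface, the orthogonal direction is also an eigendirection; calling its eigenvalue $\lambda$, the trace condition $\trace A=2f$ gives $\lambda=3f$. Hence on $U$ the principal curvatures are $-f$ and $3f$, so $|A|^2=f^2+9f^2=10f^2$. Substituting into the first equation yields $\Delta f+(10f^2-2c)f=0$, i.e.\ $\Delta f=-10f^3+2cf$.

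Next I would extract a second relation by a standard Bochner-type computation. Choosing a local orthonormal frame $\{e_1,e_2\}$ with $e_1=\grad f/|\grad f|$, one writes out the Codazzi equation $(\nabla_{e_i}A)e_j=(\nabla_{e_j}A)e_i$ for the space form and combines it with $A e_1=-fe_1$, $Ae_2=3fe_2$ to obtain expressions for the connection coefficients in terms of $e_1(f)=|\grad f|$ and $f$ (for instance one finds relations of the form $e_2(f)=0$ and $\langle\nabla_{e_1}e_1,e_2\rangle$, $\langle\nabla_{e_2}e_2,e_1\rangle$ proportional to $|\grad f|/f$). From these one computes $\Delta f=-\di(\grad f)$ directly and compares with the expression $\Delta f=-10f^3+2cf$ above, and also computes $\Delta |\grad f|^2$ or uses the Gauss equation $K=c-\det A=c+3f^2$ together with $\Delta\log|\grad f|$-type identities. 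The two independent expressions for $\Delta f$ (or for the Gaussian curvature, comparing the intrinsic $K$ with $c+3f^2$ and with the frame computation) give a contradiction unless $\grad f=0$.

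The main obstacle is the frame/Codazzi bookkeeping in the last step: one must carefully derive the connection $1$-forms from the Codazzi equations, keep track of signs and the factor $c$, and assemble the resulting ODE-like system along the integral curves of $\grad f$ to see that it is overdetermined. This is the place where the hypothesis $c\leq 0$ genuinely enters (for $c>0$ proper biharmonic surfaces do exist in $\s^3$), so any correct argument must use it precisely here — typically by showing that a certain quantity that would have to vanish is in fact strictly positive when $c\leq0$ and $\grad f\neq0$. Once $f\equiv\cst$ is established, the conclusion is immediate as described above.
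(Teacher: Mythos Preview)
The paper does not supply its own proof of this theorem; it simply records the statement with references to \cite{CI}, \cite{J2} (for $c=0$) and \cite{CMO} (for $c<0$). Your outline follows the standard route taken in those references, but as written it is only a sketch with one substantive gap and one conceptual misstep.

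The gap is that you never carry out the Codazzi computation. This is the heart of the argument: from $Ae_1=-fe_1$, $Ae_2=3fe_2$, $e_2(f)=0$ and the Codazzi equation one extracts an overdetermined ODE system for $f$ along the integral curves of $e_1$, and showing it has no solution with $e_1(f)\not\equiv 0$ requires differentiating again and eliminating carefully. Calling this ``bookkeeping'' undersells it; it is where the proof actually lives, and you have not done it.

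The misconception concerns where $c\le 0$ is used. It is \emph{not} needed in the $\grad f\neq 0$ step: the overdetermined system coming from Codazzi plus the normal biharmonic equation rules out non-constant $f$ for \emph{every} value of $c$. This is exactly why in $\s^3$ every proper biharmonic surface is CMC (see the theorem immediately following this one in the paper). The hypothesis $c\le 0$ enters only once, in your first paragraph: if $f$ is a nonzero constant then $|A|^2=2c$, which is impossible for $c\le 0$. You already handled that correctly; expecting the sign of $c$ to reappear in the non-CMC analysis will lead you to look for something that is not there.

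A minor slip: the Gauss equation gives $K=c+\det A=c-3f^2$ (cf.\ Theorem~\ref{thm:CMOP}(i)), not $c+3f^2$.
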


The situation is different when one considers biharmonic surfaces in the Euclidian sphere, as shown by the following theorem in \cite{CMO1}.

\begin{theorem} Let $\phi:M\rightarrow\mathbb{S}^3$ be a proper-biharmonic surface. Then $\phi(M)$ is an open part of the small hypersphere $\mathbb{S}^2(1/\sqrt{2})$. If, moreover, $M$ is complete, then $\phi(M)=\mathbb{S}^2(1/\sqrt{2})$ and $\phi$ is an embedding.
\end{theorem}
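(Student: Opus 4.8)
\emph{Proof plan.} The starting point is the biharmonic system for a hypersurface $M^m\subset N^{m+1}(c)$, specialised to $m=2$, $c=1$: writing $f=\tfrac12\trace A$ for the mean curvature function and $A=A_\eta$ with $\eta$ a local unit normal, $\phi$ is biharmonic if and only if
\begin{equation*}
\Delta f+(|A|^2-2)f=0\qquad\text{and}\qquad A(\grad f)=-f\grad f .
\end{equation*}
The argument proceeds in three steps: (1) show $f$ is a nonzero constant; (2) deduce that $M$ is, up to the immersion, an open part of $\mathbb{S}^2(1/\sqrt2)$; (3) upgrade this to $\phi(M)=\mathbb{S}^2(1/\sqrt2)$ and $\phi$ an embedding when $M$ is complete.

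\emph{Step 1: $f$ is constant.} Suppose not. Since $M$ is connected, $U=\{p\in M:\grad f(p)\neq0\}$ is a nonempty open set, and $U\cap\{f\neq0\}$ is open and dense in $U$ (on $U$ the function $f$ has no critical points, so its zero set meets $U$ in a curve). On $U\cap\{f\neq0\}$ the second equation shows that $E_1:=\grad f/|\grad f|$ is a principal direction with principal curvature $-f$, and, since $\trace A=2f$, the other principal curvature is $3f$, so $|A|^2=10f^2$. Complete $E_1$ to a local orthonormal frame $\{E_1,E_2\}$, so $E_1(f)=|\grad f|$ and $E_2(f)=0$. Feeding the eigenvalue data into the Codazzi equation $(\nabla_{E_1}A)E_2=(\nabla_{E_2}A)E_1$ and using $f\neq0$, one obtains $\nabla_{E_1}E_1=\nabla_{E_1}E_2=0$ and $\nabla_{E_2}E_1=\beta E_2$, $\nabla_{E_2}E_2=-\beta E_1$ with $\beta=-\tfrac{3E_1(f)}{4f}$. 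Next I compute the Gaussian curvature $K$ in two ways: the Gauss equation gives $K=1-3f^2$, while the structure equations for the frame give $K=-(E_1(\beta)+\beta^2)$; moreover the first biharmonic equation now reads $\Delta f=2f-10f^3$, which together with the connection data becomes an expression for $E_1(|\grad f|)$ in terms of $f$ and $|\grad f|$. Equating the two formulas for $K$ gives $|\grad f|^2=14f^4-\tfrac{10}{3}f^2$; differentiating this relation along $E_1$ (recall $E_1(f)=|\grad f|$) and comparing with the expression for $E_1(|\grad f|)$ coming from the biharmonic equation yields the algebraic identity $f\bigl(\tfrac{15}{2}f^2+\tfrac{7}{6}\bigr)=0$, hence $f=0$, a contradiction. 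Therefore $\grad f\equiv0$, so $f$ is constant, and since $\phi$ is proper biharmonic, $f$ is a nonzero constant.

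\emph{Step 2 (identification) and Step 3 (completeness).} With $f$ a nonzero constant the second equation holds trivially and the first forces $|A|^2=2$; consequently the principal curvatures $k_1,k_2$ (with $k_1+k_2=2f$, $k_1^2+k_2^2=2$) are the constants $f\pm\sqrt{1-f^2}$, so $|f|\leq1$, in accordance with the bound $|H|\leq1$ for CMC proper biharmonic submanifolds of spheres. If $|f|<1$, then $k_1\neq k_2$ everywhere, and Codazzi applied to the (local) principal frame forces all four covariant derivatives of the frame to vanish, whence $K=0$; but the Gauss equation gives $K=1+k_1k_2=2f^2>0$, a contradiction. Hence $|f|=1$, i.e.\ $k_1=k_2=f=\pm1$, so $M$ is totally umbilical with constant principal curvature; by the classical classification of totally umbilical hypersurfaces in space forms (and after possibly reversing $\eta$), $\phi(M)$ is an open part of the umbilical small hypersphere $\mathbb{S}^2(1/\sqrt2)$ — equivalently, one invokes the quoted characterisation of CMC proper biharmonic submanifolds of spheres with $|H|=1$. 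Finally, if $M$ is complete, then, $\phi$ being an isometric immersion with image in $\mathbb{S}^2(1/\sqrt2)$, it factors as a local isometry from the complete surface $M$ to the connected surface $\mathbb{S}^2(1/\sqrt2)$ (with its round metric); such a map is a Riemannian covering, and since $\mathbb{S}^2(1/\sqrt2)$ is simply connected it is a diffeomorphism. Thus $\phi(M)=\mathbb{S}^2(1/\sqrt2)$ and $\phi$ is an embedding.

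The only place that requires a genuine computation — and the main obstacle — is Step 1: combining biharmonicity with the Gauss and Codazzi equations produces an over-determined system for $f$ along the gradient direction, and extracting the contradiction $f=0$ from it is the technical heart of the proof. Steps 2 and 3 are essentially formal, resting on the classification of totally umbilical hypersurfaces (together with the known behaviour of $|H|$ for CMC proper biharmonic submanifolds of spheres) and on the standard fact that a local isometry out of a complete manifold is a covering map.
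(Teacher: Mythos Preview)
Your proof is correct. The paper does not prove this theorem directly but cites \cite{CMO1} and later observes that it is the $m=2$ instance of Theorem~\ref{BMO2curv} (a surface automatically has at most two principal curvatures, and for $m=2$ the product case $\mathbb{S}^{m_1}(1/\sqrt2)\times\mathbb{S}^{m_2}(1/\sqrt2)$ with $m_1\neq m_2$ is vacuous). Your argument is precisely the direct $m=2$ proof of that theorem: Step~1 shows CMC from the tangent part of the biharmonic system together with Codazzi and Gauss, and Step~2 rules out the two--distinct--curvature CMC case by the flatness/Gauss contradiction. So the routes coincide in substance; you simply carry out explicitly, in dimension two, the computation that the paper packages into the reference to Theorem~\ref{BMO2curv}. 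The only remark is that your Step~2 contradiction (constant $k_1\neq k_2$ forcing $K=0$ versus $K=2f^2>0$) could equally be replaced by invoking Theorem~\ref{BMO2curv} directly once CMC is known, which is how the survey phrases it.
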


\subsection{Biconservative surfaces}

As we have seen, CMC surfaces in a $3$-di\-men\-sio\-nal space form are biconservative and, therefore, from this point of view, the study of non-CMC biconservative surfaces is more interesting. The maximal biconservative surfaces with $\grad f\neq 0$ at any point are called standard biconservative surfaces, and the domains of their defining immersions endowed with the induced metrics are called abstract standard biconservative surfaces.

The explicit parametric equations of standard biconservative surfaces in $N^3(c)$ were obtained in \cite{CMOP}, while they had already been found, in a slightly different manner, when working in $\mathbb{R}^3$ (see \cite{HV95}). In the hyperbolic three dimensional space, one of the possible situations was omitted in \cite{CMOP}, the one corresponding to $C_{-1}=0$, as we will see later, but it was then treated in \cite{Fu} (see also \cite{NO-NLA}).

We present here only the case when the ambient space is the Euclidean sphere (that is the sectional curvature $c$ is equal to 1).

\begin{theorem}[\cite{CMOP}]\label{thm:CMOP33}
Let $M^2$ be a biconservative surface in $\mathbb{S}^3$ with $(\grad f)(p)\neq 0$ at any point $p\in M$. Then, the surface, viewed in $\mathbb{R}^4$, can be parametrized locally by
\begin{equation}\label{eq:YC1tilda}
Y_{\tilde{C}_1}(u,v)=\sigma(u)+\frac{4\kappa(u)^{-3/4}}{3\sqrt{\tilde{C}_1}}\left( \overline{f}_1 (\cos v -1)+\overline{f}_2 \sin v\right),
  \end{equation}
where $\tilde{C}_1\in\left(64/\left(3^{5/4}\right),\infty\right)$ is a positive constant; $\overline{f}_1, \overline{f}_2\in\mathbb{R}^4$ are two constant orthonormal vectors; $\sigma(u)$ is a curve parametrized by arc-length that satisfies
\begin{equation}
\label{eq:sigma_prod_scal}
  \langle \sigma(u),\overline{f}_1\rangle = \frac{4\kappa(u)^{-3/4}}{3\sqrt{\tilde{C}_1}}, \qquad \langle \sigma(u),\overline{f}_2\rangle=0,
\end{equation}
and, as a curve in $\mathbb{S}^2$, its curvature $\kappa=\kappa(u)$ is a positive non-constant solution of the following $ODE$
\begin{equation}\label{k''k}
\kappa^{\prime\prime}\kappa=\frac{7}{4}\left(\kappa^\prime\right)^2+\frac{4}{3}\kappa^2-4\kappa^4
\end{equation}
such that
\begin{equation}\label{eq:prime_integ}
\left(\kappa^\prime\right)^2=-\frac{16}{9}\kappa^2-16\kappa^4+\tilde{C}_1\kappa^{7/2}.
\end{equation}
\end{theorem}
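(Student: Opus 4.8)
The plan is to exploit the special structure of biconservative hypersurfaces that follows from equation \eqref{eq:bicons}: since $M^2\subset\mathbb{S}^3$ and $\grad f\neq 0$ everywhere, $\grad f$ is an eigenvector of the shape operator $A$ with eigenvalue $-f$ (taking $m=2$ in $A(\grad f)=-\tfrac m2 f\grad f$). Hence $A$ is diagonalized by the frame $\{X_1,X_2\}$ where $X_1=\grad f/|\grad f|$ and $X_2\perp X_1$, with principal curvatures $\lambda_1=-f$ and $\lambda_2=3f$ (the second coming from $\trace A=2f$). The first step is therefore to write the Gauss and Codazzi equations of $M^2$ in $\mathbb{S}^3$ in this frame: Codazzi forces $X_1(\lambda_2)$ and the connection coefficients to be expressible in $f$, and one derives that $X_2(f)=0$, so $f$ depends on a single variable. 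Introducing a coordinate $u$ with $\partial_u=\grad f/|\grad f|^2$ (or a suitable normalization thereof) and using Codazzi again, one shows the metric takes the warped form $g=du^2+\rho(u)^2\,dv^2$ for some positive function $\rho$, and the Gauss equation becomes a second-order ODE linking $\rho$, $f$ and the curvature $K$.

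The second step is to pass from $f$ to the curve data. From $g$ one reads the Gaussian curvature $K=-\rho''/\rho$; combining with the Gauss equation $K=1+\det A=1-3f^2$ gives $\rho''/\rho=3f^2-1$. A further differentiation of the relation coming from $|\grad f|$ together with Codazzi produces a closed ODE system for $(f,\rho)$, and after eliminating $\rho$ one obtains a single ODE for $f$; renaming $f$ in terms of $\kappa$ through the substitution that identifies $\kappa$ with the curvature of a spherical curve $\sigma$ (this is the $\kappa=\kappa(u)$ in the statement — essentially $f$ and $\kappa$ are related by a power, forced by the structure of \eqref{k''k}), one arrives at \eqref{k''k}. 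Multiplying \eqref{k''k} by an integrating factor of the form $\kappa^{-7/2}\kappa'$ and integrating yields the first integral \eqref{eq:prime_integ}, with $\tilde C_1$ the constant of integration; the constraint $\tilde C_1\in(64/3^{5/4},\infty)$ comes from requiring $(\kappa')^2\geq 0$ to have solutions with $\kappa>0$ non-constant, i.e.\ analyzing when the right-hand side of \eqref{eq:prime_integ} is positive on an interval.

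The third step is the reconstruction of the immersion. Having the metric $g=du^2+\rho(u)^2dv^2$ and the diagonal shape operator, one integrates the Gauss–Weingarten equations for the position vector $Y$ in $\mathbb{R}^4$. The key observation is that the $v$-dependence is through a single harmonic oscillator: $\partial_v^2 Y$ is a linear combination of $Y$, $\partial_u Y$ and the unit normal with $v$-independent coefficients, so that $Y(u,v)=\sigma(u)+(\text{amplitude}(u))\big(\overline f_1(\cos v-1)+\overline f_2\sin v\big)$ for constant orthonormal $\overline f_1,\overline f_2\in\mathbb{R}^4$, where $\sigma(u)=Y(u,0)$; matching the amplitude against $\rho$ and the normalization of $\kappa$ gives the explicit coefficient $\tfrac{4\kappa^{-3/4}}{3\sqrt{\tilde C_1}}$ in \eqref{eq:YC1tilda}, and projecting the Gauss–Weingarten relations onto $\overline f_1,\overline f_2$ yields \eqref{eq:sigma_prod_scal}. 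Finally one checks that $\sigma$, being the $v=0$ slice, is a unit-speed curve lying in a totally geodesic $\mathbb{S}^2\subset\mathbb{S}^3$ (cut out by $\overline f_2$) whose geodesic curvature is exactly $\kappa$, closing the loop.

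I expect the main obstacle to be the second step: cleanly deriving the single ODE \eqref{k''k} and its first integral \eqref{eq:prime_integ} with the precise numerical coefficients $7/4$, $4/3$, $4$ and $16/9$, $16$. This requires carefully tracking the normalizations (the choice of parameter $u$, the power relating $f$ to $\kappa$, and the factor $c=1$) through the Gauss and Codazzi equations, and a single slip in a constant propagates everywhere; the qualitative ODE analysis pinning down the range of $\tilde C_1$ is then routine but must be done consistently with those constants. The first and third steps are, by contrast, fairly mechanical once the frame adapted to $\grad f$ is in place.
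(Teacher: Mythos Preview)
The paper is a survey and does not supply its own proof of Theorem~\ref{thm:CMOP33}; the result is quoted from \cite{CMOP} and only supplemented by Remark~\ref{r3.4}, the elastic-curve remark that follows it, and the intrinsic characterization in Theorem~\ref{thm:CMOP}. Your outline is consistent with all of these hints: the principal curvatures $\lambda_1=-f$, $\lambda_2=3f$ and the relation $K=1-3f^2$ match Theorem~\ref{thm:CMOP}(i) and the linear-Weingarten remark after it; the fact that $\sigma$ lies in the totally geodesic $\mathbb{S}^2$ orthogonal to $\overline f_2$ and is a geodesic of $M$ matches Remark~\ref{r3.4}; and the elastic-curve remark ($\sigma$ critical for $\int_\gamma\kappa^{1/4}$) is precisely the variational reformulation of \eqref{k''k} that your integrating-factor computation would produce. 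So your three-step plan --- adapted frame plus Codazzi, reduction to a single ODE in the warping variable, and integration of the Gauss--Weingarten system via a rotational ansatz --- is the expected route and is essentially what the original source \cite{CMOP} carries out.

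One point you leave hazy and should pin down before computing constants: in your second step you say ``$f$ and $\kappa$ are related by a power, forced by the structure of \eqref{k''k}''. In fact the identification is geometric, not formal: $\sigma=Y(\cdot,0)$ is a geodesic of $M$ (Remark~\ref{r3.4}), so its curvature as a curve in $\mathbb{S}^3$ equals the normal curvature of $M$ in the $\partial_u$ direction, namely $\lambda_1$; since $\sigma$ lies in a totally geodesic $\mathbb{S}^2$, that is also its curvature $\kappa$ in $\mathbb{S}^2$. Fixing this relation at the outset (rather than reverse-engineering it from the ODE) will make the coefficients $7/4$, $4/3$, $16/9$, $16$ and the amplitude $\tfrac{4}{3\sqrt{\tilde C_1}}\kappa^{-3/4}$ fall out cleanly, and the range $\tilde C_1>64/3^{5/4}$ then comes from requiring the cubic-in-$\kappa^{1/2}$ on the right of \eqref{eq:prime_integ} to be positive somewhere on $(0,\infty)$.
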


\begin{remark}\label{r3.4}
The curve $\sigma$ lies in the totally geodesic $\mathbb{S}^2=\mathbb{S}^3\cap\Pi$, where $\Pi$ is the linear hyperspace of $\mathbb{R}^4$ orthogonal to $\overline{f}_{2}$. The constant $\tilde{C}_1$ determines uniquely the curvature $\kappa$, up to a translation and change of sign of $u$, and then $\kappa$, $\overline{f}_1$ and $\overline{f}_2$ determine uniquely the curve $\sigma$. Thus, up to isometries of $\mathbb{S}^3$, we have a one parametric family of standard biconservative surfaces in $\mathbb{S}^3$ indexed by $\tilde{C}_1$. Moreover, $\sigma$ is a geodesic of $M$.
\end{remark}

\begin{remark} From \eqref{k''k} it follows that $\sigma$ is a critical point of the curvature energy functional $\Theta(\gamma)=\int_{\gamma}\kappa^{1/4}$, where $\gamma$ is a curve in $\mathbb{S}^2$ parametrized by arc-length, in the sense of classical elastic curves (see \cite{MP}).
\end{remark}

None of these standard biconservative surfaces in space forms $N^3(c)$ is complete. Therefore, the next step was finding complete non-CMC biconservative surfaces with $\grad f\neq 0$ on an open subset.

The $c=0$ case proved to be the simplest one and one was able to construct such surfaces by using both an extrinsic and an intrinsic approach. Thus, working extrinsically, i.e., with the explicit parametric equations, as a standard biconservative surface in $\mathbb{R}^3$ is a rotational surface with the boundary (in its topogical closure) connected and given by one circle, one glued two profile curves thus obtaining a regular closed surface. In this case, the gluing was made along the shared boundary that becomes a geodesic on the resulting surface, where $\grad f=0$. In the intrinsic way, one first constructed an abstract simply connected complete surface and then the respective non-CMC biconservative immersion. We note that to obtain the abstract surface, one used isothermal coordinates and, again, a gluing process of two abstract standard biconservative surfaces. We note that this surface cannot be factorized to a (non-flat) torus.

\begin{theorem}[\cite{SN-JGP},\cite{SN-Thesis}]
Let $\left(\mathbb{R}^2,g_{C_0}=C_0 \left(\cosh u\right)^6\left(du^2+dv^2\right)\right)$ be a surface, where $C_0\in\mathbb{R}$ is a positive constant. Then we have:
\begin{itemize}
\item[(i)] the metric on $\mathbb{R}^2$ is complete;
\item[(ii)] the Gaussian curvature is given by
           $$
            K_{C_0}(u,v)=K_{C_0}(u)=-\frac{3}{C_0\left(\cosh u\right)^8}<0,\quad  K^\prime_{C_0}(u)=\frac{24 \sinh u}{C_0\left(\cosh u\right)^9},
           $$
           and therefore $\grad K_{C_0}\neq 0$ at any point of $\mathbb{R}^2\setminus Ov$;

\item[(iii)] the immersion $\phi_{C_0}:\left(\mathbb{R}^2,g_{C_0}\right)\to \mathbb{R}^3$ given by
    $$
    \phi_{C_0}(u,v)=\left(\sigma_{C_0}^1(u)\cos (3v), \sigma_{C_0}^1(u)\sin (3v), \sigma_{C_0}^2(u)\right)
    $$
    is biconservative in $\mathbb{R}^3$, where
    $$
    \sigma_{C_0}^1(u)=\frac{\sqrt{C_0}}{3}\left(\cosh u\right)^3, \quad
    \sigma_{C_0}^2(u)=\frac{\sqrt{C_0}}{2}\left(\frac{1}{2}\sinh (2u)+u\right), \qquad u \in \mathbb{R}.
    $$
\end{itemize}
\end{theorem}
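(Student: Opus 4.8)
The plan is to verify the three stated facts by direct computation with the given explicit data, treating $(u,v)$ as isothermal coordinates on $\mathbb{R}^2$. For part (i), completeness of the metric $g_{C_0}=C_0(\cosh u)^6(du^2+dv^2)$, I would argue that along any divergent curve the length diverges: since the conformal factor $C_0(\cosh u)^6\geq C_0$ is bounded below by a positive constant, $g_{C_0}$ dominates $C_0(du^2+dv^2)$, and the flat metric $du^2+dv^2$ on $\mathbb{R}^2$ is complete; a metric bounded below by a complete one is complete. (Alternatively, one checks separately the $u$-direction — where $\int_0^{\pm\infty}(\cosh u)^3\,du=\infty$ — and the $v$-direction, where lines are already complete.)

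For part (ii), I would use the standard formula for the Gaussian curvature of a conformal metric $e^{2\rho}(du^2+dv^2)$, namely $K=-e^{-2\rho}\Delta_0\rho$, where $\Delta_0=\partial_u^2+\partial_v^2$ is the flat Laplacian. Here $e^{2\rho}=C_0(\cosh u)^6$, so $2\rho=\log C_0+6\log\cosh u$ and $\rho$ depends only on $u$; then $\Delta_0\rho=\rho''(u)=3(\log\cosh u)''=3(1-\tanh^2 u)=3(\cosh u)^{-2}$. Substituting gives $K_{C_0}=-\,3(\cosh u)^{-2}\big/\big(C_0(\cosh u)^6\big)=-3/\big(C_0(\cosh u)^8\big)$, which is the claimed value, manifestly negative. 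Differentiating in $u$ yields $K'_{C_0}(u)=24\sinh u/\big(C_0(\cosh u)^9\big)$, so $\grad K_{C_0}$ vanishes exactly on the $v$-axis $\{u=0\}$ and is nonzero elsewhere, as stated.

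For part (iii), the task is to show the rotational immersion $\phi_{C_0}$ is biconservative in $\mathbb{R}^3$. First I would confirm it is isometric for the given metric: with $\sigma^1_{C_0}(u)=\frac{\sqrt{C_0}}{3}(\cosh u)^3$ and $\sigma^2_{C_0}(u)=\frac{\sqrt{C_0}}{2}\big(\frac12\sinh(2u)+u\big)$, one has $(\sigma^{1\prime})^2+(\sigma^{2\prime})^2=C_0(\cosh u)^6$ — indeed $\sigma^{1\prime}=\sqrt{C_0}(\cosh u)^2\sinh u$ and $\sigma^{2\prime}=\sqrt{C_0}(\cosh u)^2\cdot$ (a check of $\frac12(\cosh(2u)+1)=(\cosh u)^2$) — so the induced metric is exactly $g_{C_0}$, noting the $\cos(3v),\sin(3v)$ dependence supplies the matching $v$-coefficient. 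Then, since $\phi_{C_0}$ is a hypersurface in a flat space form, by the proposition characterizing biconservative hypersurfaces it suffices to verify \eqref{eq:bicons}, i.e. $A(\grad f)=-\frac{m}{2}f\grad f$ with $m=2$, hence $A(\grad f)=-f\grad f$. For a rotational surface $f$ depends only on $u$, so $\grad f$ is proportional to $\partial_u$; thus the equation reduces to the scalar condition that $\partial_u$ be an eigendirection of $A$ (automatic for a surface of revolution, with eigenvalue the meridian principal curvature $k_1$) together with the single ODE $k_1=-f=-\frac12(k_1+k_2)$, equivalently $3k_1+k_2=0$. I would compute $k_1,k_2$ from the profile curve in the standard way (the meridian curvature $k_1$ and the "parallel" curvature $k_2$ of a surface of revolution) and check that the chosen $\sigma_{C_0}$ satisfies $3k_1+k_2=0$ identically; this is precisely the ODE that the parametrization in \cite{HV95} was built to solve.

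The routine parts are the curvature formula and the completeness argument; the only place demanding care is part (iii), specifically organizing the principal-curvature computation for the surface of revolution and confirming that the relation $3k_1+k_2=0$ holds on the nose for the given $\sigma^1_{C_0},\sigma^2_{C_0}$. I expect this to be the main obstacle, though it is a finite computation once one recognizes that biconservativity for a rotational surface in $\mathbb{R}^3$ collapses to this one scalar ODE; indeed it is easier to run the logic in reverse — derive the ODE for $\sigma^1$ from $3k_1+k_2=0$, solve it, and recover $(\cosh u)^3$ up to constants — which also explains the origin of the formula.
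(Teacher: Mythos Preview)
Your proposal is correct; each of the three parts is handled by a valid and complete argument. The completeness comparison with the flat metric, the conformal curvature computation, and the reduction of biconservativity for a surface of revolution to the linear Weingarten relation $3k_1+k_2=0$ are all sound, and the verification that the given profile curve satisfies this relation goes through (indeed $k_1=-1/(\sqrt{C_0}\cosh^4 u)$, $k_2=3/(\sqrt{C_0}\cosh^4 u)$ with the natural choice of normal).

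The paper itself does not prove this theorem --- it is a survey result quoted from \cite{SN-JGP} and \cite{SN-Thesis} --- but it does describe the method used there, which is constructive rather than verificational: one starts from the abstract standard biconservative surfaces (those with $\grad f\neq 0$ everywhere), passes to isothermal coordinates, and glues two of them along the common boundary circle, the line $u=0$ on which $\grad f$ vanishes, to obtain the complete metric $g_{C_0}$; the immersion is then built from this glued abstract surface. Your route is different in spirit: you take the finished formulas as given and check the three assertions directly. What the original approach buys is an explanation of \emph{where} the metric $C_0(\cosh u)^6(du^2+dv^2)$ and the profile $(\cosh u)^3$ come from; what your approach buys is brevity and self-containment, since once the formulas are on the table the verification is short and requires no gluing argument or appeal to the classification of standard biconservative surfaces.
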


To intrinsically approach the $c\neq 0$ cases, as well as in the $c=0$ case actually, one needs to use the following intrinsic properties of the standard biconservative surfaces.

\begin{theorem}[\cite{CMOP}]\label{thm:CMOP}
Let $\phi:M^2\rightarrow N^3(c)$ be a biconservative surface with $\grad f\neq 0$ at any point of $M$. Then the Gaussian curvature $K$ satisfies
\begin{itemize}
\item [(i)]
    $$
    K=\det A+c=-3f^2+c;
    $$
\item [(ii)] $c-K>0$, $\grad K\neq 0$ at any point of $M$, and the level curves of $K$ are circles in $M$ with constant curvature
\begin{equation*}
\kappa=\frac{3|\grad K|}{8(c-K)};
\end{equation*}
\item [(iii)]
$$
(c-K)\Delta K-|\grad K|^2-\frac{8}{3}K(c-K)^2=0.
$$
\end{itemize}
\end{theorem}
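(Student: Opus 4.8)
The plan is to read off all three statements from the biconservativity equation \eqref{eq:bicons} together with the Gauss and Codazzi equations of the immersion, working in the orthonormal frame adapted to $\grad f$. For (i), since $\grad f\neq 0$ everywhere, \eqref{eq:bicons} with $m=2$ says $A(\grad f)=-f\grad f$, so $-f$ is a principal curvature with principal direction $\grad f$; as $\trace A=2f$, the second principal curvature equals $3f$, hence $\det A=-3f^2$ and the Gauss equation $K=\det A+c$ gives $K=-3f^2+c$.

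For (ii), put $\rho=|\grad f|>0$, $X_1=\grad f/\rho$, and let $X_2$ be a local unit field with $X_2\perp X_1$, so $AX_1=-fX_1$ and $AX_2=3fX_2$. Write $\nabla_{X_1}X_1=aX_2$, $\nabla_{X_1}X_2=-aX_1$, $\nabla_{X_2}X_1=bX_2$, $\nabla_{X_2}X_2=-bX_1$. Imposing the Codazzi equation $(\nabla_{X_1}A)X_2=(\nabla_{X_2}A)X_1$ and using $X_2f=0$, $X_1f=\rho$, one obtains $af=0$ and $3\rho=-4bf$. The latter forces $f\neq 0$ at every point (so $f$ has constant sign on the connected $M$), whence $c-K=3f^2>0$ and $\grad K=-6f\grad f\neq 0$; it also gives $b=-\frac{3\rho}{4f}$, and then $af=0$ yields $a=0$, i.e. the gradient lines of $f$ are geodesics of $M$. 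The level sets of $K$ coincide with those of $f$, namely the integral curves of $X_2$, and their geodesic curvature is $|\nabla_{X_2}X_2|=|b|=\frac{3\rho}{4|f|}$; substituting $|\grad K|=6|f|\rho$ and $c-K=3f^2$ rewrites this as $\frac{3|\grad K|}{8(c-K)}$. To see it is constant along each such curve, note $X_2f=0$ and $X_2\rho=X_2(X_1f)=X_1(X_2f)+[X_2,X_1]f=bX_2f=0$ (using $a=0$), so $f$ and $\rho$, hence the curvature, are constant along the integral curves of $X_2$.

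For (iii), compute $K$ a second time, intrinsically from the frame: with $a=0$ the structure equation gives $K=-X_1b-b^2$. Equating this with $K=c-3f^2$ and inserting $b=-\frac{3\rho}{4f}$ produces a first-order relation isolating $X_1\rho$ as a polynomial expression in $f$ and $\rho$ (and $c$). Since $\Hess f$ is diagonal in this frame with entries $X_1\rho$ and $\rho b$, one has $\Delta f=-(X_1\rho+\rho b)$; using the previous relation and $\rho b=-\frac{3\rho^2}{4f}$ gives $\Delta f=4f^3-\frac{4c}{3}f-\frac{\rho^2}{f}$. Then $\Delta K=-3\Delta(f^2)=6\rho^2-6f\Delta f$, and substituting $c-K=3f^2$, $|\grad K|^2=36f^2\rho^2$, $K=c-3f^2$ into $(c-K)\Delta K-|\grad K|^2-\frac{8}{3}K(c-K)^2$ makes it vanish identically.

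The delicate point is the Codazzi computation, which produces the two structural identities $a=0$ and $3\rho=-4bf$; these together with the two expressions for $K$ are exactly what forces $f\neq 0$ and makes all three statements fall out, the remaining algebra being routine. One must also keep track of the sign convention $\Delta=-\trace\Hess$ used here, and note that the adapted frame is smooth because $\grad f\neq 0$ and $M$ is oriented.
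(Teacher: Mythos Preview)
Your proof is correct. The paper does not include its own proof of this theorem; it is quoted from \cite{CMOP}. Your approach --- diagonalising $A$ in the frame adapted to $\grad f$, extracting $a=0$ and $3\rho=-4bf$ from Codazzi, and then combining the extrinsic Gauss equation $K=c-3f^2$ with the intrinsic structure equation $K=-X_1b-b^2$ --- is the standard one and is essentially the argument of \cite{CMOP}; the algebra in all three parts checks out, including the sign convention $\Delta=-\trace\Hess$.
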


\begin{remark} If $M^2$ is a biconservative surface in $N^3(c)$ with $\grad f\neq 0$ at any point, then it is a linear Weingarten surface. Indeed, from $A(\grad f)=-f\grad f$, we have $3\lambda_1+\lambda_2=0$, where $\lambda_1$ and $\lambda_2$ are the principal curvatures of $M^2$.
\end{remark}

Now, we present a uniqueness result for biconservative surfaces with nowhere vanishing $\grad f$.

\begin{theorem}[\cite{FNO}]\label{thm:uniqueness}
Let $\left(M^2,g\right)$ be an abstract surface and $c\in\mathbb{R}$ an arbitrarily fixed constant. If $M$ admits two biconservative immersions in $N^3(c)$ such that the gradients of their mean curvature functions are different from zero at any point of $M$, then the two immersions differ by an isometry of $N^3(c)$.
\end{theorem}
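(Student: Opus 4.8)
The plan is to reduce the statement to the classical fundamental theorem for hypersurfaces of a space form: two isometric immersions of a connected Riemannian manifold into $N^3(c)$ that induce the same shape operator, with respect to suitably chosen unit normal fields, differ by an isometry of $N^3(c)$. Since the two given immersions are isometric, they induce the same metric $g$ on $M$, hence the same Gaussian curvature $K$; so everything reduces to proving that the shape operators $A_1$ and $A_2$ of the two immersions agree, after possibly reversing one of the unit normals.

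First I would invoke Theorem \ref{thm:CMOP}(i), which gives $K=-3f_i^2+c$ for $i=1,2$, so that $f_i^2=(c-K)/3$ is intrinsic, determined by $g$ alone, and strictly positive by Theorem \ref{thm:CMOP}(ii). Hence each $f_i$ equals $\pm\sqrt{(c-K)/3}$ with a constant sign, by connectedness of $M$; replacing $\eta_2$ by $-\eta_2$ if necessary (which turns $A_2$ into $-A_2$ and $f_2$ into $-f_2$), I may assume $f_1=f_2=:f$, and then $\grad f_1=\grad f_2=:V$ is one and the same nowhere-vanishing, intrinsically defined vector field. Next, the biconservativity equation \eqref{eq:bicons} with $m=2$ reads $A_i(V)=-fV$, so $V$ is, for both immersions, a principal direction with the same principal curvature $-f$. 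Since $c-K>0$ forces $f\neq0$, the two principal curvatures $-f$ and $3f$ (the second obtained from $\trace A_i=2f$) are everywhere distinct, so in the $g$-orthonormal frame $\{V/|V|,e_2\}$ both $A_1$ and $A_2$ are diagonal with eigenvalues $-f$ and $3f$; consequently $A_1=A_2$.

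Having matched the first fundamental form $g$ and the shape operator $A:=A_1=A_2$, I would finish with the fundamental theorem quoted above. Concretely, fixing $p\in M$ one chooses an isometry $F$ of $N^3(c)$ carrying the first-order data of $\phi_1$ at $p$ (point, oriented tangent plane, unit normal) to that of $\phi_2$; then $F\circ\phi_1$ and $\phi_2$ satisfy the same moving-frame system, built from the Gauss and Weingarten formulas, with the same initial condition at $p$, so they coincide near $p$, and their coincidence set, being nonempty, open and closed, is all of the connected surface $M$.

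I expect the comparison of $A_1$ and $A_2$ to be the only real step, and its heart is the observation that Theorem \ref{thm:CMOP}(i) renders $f^2$, and hence the principal line field $\Span(\grad f)$, intrinsic; once that is in place, biconservativity pins down one principal curvature on this line and the trace identity pins down the other, so no freedom is left. The sign ambiguity in the choice of unit normals is the only bookkeeping nuisance, and it is harmless since the conclusion allows an arbitrary, possibly orientation-reversing, isometry of $N^3(c)$.
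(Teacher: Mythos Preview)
Your argument is correct. The paper is a survey and does not supply its own proof of this theorem, citing instead \cite{FNO}; nonetheless, the route you take is exactly the natural one, and every ingredient you use is already present in the survey: the biconservativity equation $A(\grad f)=-f\grad f$ from \eqref{eq:bicons}, the intrinsic formula $K=-3f^2+c$ and the inequality $c-K>0$ from Theorem~\ref{thm:CMOP}(i)--(ii), and the remark following Theorem~\ref{thm:CMOP} that the principal curvatures are $\lambda_1=-f$ and $\lambda_2=3f$. Once $f_1^2=f_2^2=(c-K)/3$ is recognized as intrinsic and the sign is normalized by flipping one unit normal, the common eigenvector $\grad f$ and the trace condition force $A_1=A_2$, and the uniqueness half of the fundamental theorem for hypersurfaces in space forms (which needs only connectedness, not simple connectedness) finishes the proof. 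This is almost certainly the argument in \cite{FNO} as well.
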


Even if the notion of a biconservative submanifold belongs, obviously, to extrinsic geometry, in the particular case of biconservative surfaces in $N^3(c)$ one can give an intrinsic characterization of such surfaces.

\begin{theorem}[\cite{FNO}]\label{thm:char}
Let $\left(M^2,g\right)$ be an abstract surface. Then $M$ can be locally isometrically embedded in a space form $N^3(c)$ as a biconservative surface with the gradient of the mean curvature different from zero everywhere if and only if the Gaussian curvature $K$ satisfies $c-K(p)>0$, $(\grad K)(p)\neq 0$, for any $p\in M$, and its level curves are circles in $M$ with constant curvature
$$
\kappa=\frac{3|\grad K|}{8(c-K)}.
$$
\end{theorem}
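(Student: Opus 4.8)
The ``only if'' direction is immediate from Theorem~\ref{thm:CMOP}: for a biconservative surface $\phi:M^2\to N^3(c)$ with $\grad f$ nowhere zero, part~(i) gives $c-K=3f^2>0$ and $\grad K=-6f\,\grad f\neq 0$, while part~(ii) is precisely the statement that the level curves of $K$ are circles of curvature $3|\grad K|/(8(c-K))$. So the content lies in the converse, and the plan is to construct the desired immersion via the fundamental theorem of surfaces in a space form.

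First I would recover the candidate intrinsic data. Since $c-K>0$, set $f:=\sqrt{(c-K)/3}>0$; then $f$ is smooth and $\grad f=-\grad K/(6f)\neq 0$ everywhere. The hypothesis that each level curve of $K$ is a circle of curvature $3|\grad K|/(8(c-K))$ forces $|\grad K|$, hence $|\grad f|$, to be constant along the level curves, so it is a function of $f$ alone; consequently one can choose, near any point, a coordinate $u=u(f)$ with $|\grad u|\equiv 1$ together with a transverse parameter $v$, putting the metric in the form $g=du^2+G(u,v)^2\,dv^2$ with $G>0$. In these coordinates $f=f(u)$, the $u$-curves are unit-speed geodesics orthogonal to the level curves, and $\{e_1:=G^{-1}\partial_v,\ e_2:=\partial_u\}$ is an orthonormal frame with $\grad f=f'(u)\,e_2$.

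Next I would write down the candidate shape operator and verify the structure equations. Biconservativity together with $\trace A=2f$ leaves no freedom: one must take $Ae_2=-f\,e_2$, so that $A(\grad f)=-f\,\grad f$, i.e.\ \eqref{eq:bicons} with $m=2$, and $Ae_1=3f\,e_1$. Then $\det A=-3f^2=K-c$, so the Gauss equation $K=\det A+c$ holds by construction. For Codazzi, a short computation with the Christoffel symbols of $g=du^2+G^2dv^2$ gives $\nabla_{e_2}e_1=0$ and $\nabla_{e_1}e_2=(G_u/G)\,e_1$; since also $e_1(f)=0$, the system $(\nabla_XA)Y=(\nabla_YA)X$ reduces to the single scalar identity $G_u/G=-3f'(u)/(4f)$. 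On the other hand the geodesic curvature of a level curve is $|G_u/G|$, and, using $K=c-3f^2$, one computes $3|\grad K|/(8(c-K))=3|\grad f|/(4f)=3|f'(u)|/(4f)$; so the ``circles'' hypothesis says exactly that $|G_u/G|=3|f'(u)|/(4f)$, and after orienting $u$ so that the curvature vector of the level curves points in the direction of $\grad f$, this becomes $G_u/G=-3f'(u)/(4f)$. Hence both structure equations hold, and the fundamental theorem of surfaces produces, near each point, an isometric immersion $\phi:M^2\to N^3(c)$ with shape operator $A$; being an immersion of surfaces it is locally an embedding, its mean curvature function is $\tfrac{1}{2}\trace A=f$ with $\grad f\neq 0$, and by construction it is biconservative.

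The step I expect to be the real obstacle is the Codazzi verification: one has to recognise that the prescribed geodesic curvature $3|\grad K|/(8(c-K))$ of the level curves, once rewritten through the relation $K=-3f^2+c$ that biconservativity imposes, matches, in magnitude and in sign, the unique connection coefficient $G_u/G$ surviving in the Codazzi equations. The ``circles'' hypothesis is used twice, first to guarantee that the distance-type coordinate $u$ exists (equivalently, that $|\grad K|$ is a function of $K$) and then to pin down $G_u/G$; controlling the orientation is exactly what keeps this from being routine.
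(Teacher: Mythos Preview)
The survey does not prove Theorem~\ref{thm:char} itself; it quotes the result from \cite{FNO}. Your strategy for the converse---set $f=\sqrt{(c-K)/3}$, force $A$ to have eigenvalues $-f$ and $3f$ on $e_2=\partial_u$ and $e_1$, check Gauss and Codazzi, and invoke the fundamental theorem of surfaces---is the natural one, and your reduction of Codazzi to the single relation $G_u/G=-3f'/(4f)$ is correct.

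The gap is exactly where you locate it, but your proposed resolution does not work. The hypothesis gives only the magnitude $|G_u/G|=3|f'|/(4f)$; Codazzi requires the signed equality. Your claim that one can ``orient $u$ so that the curvature vector of the level curves points in the direction of $\grad f$'' is empty: both $\nabla_{e_1}e_1=-(G_u/G)\,e_2$ and $\grad f=f'\,e_2$ are coordinate-independent, the substitution $u\mapsto -u$ flips both $G_u/G$ and $f'$ and so preserves their relative sign, and replacing $f$ by $-f$ leaves $f'/f$ invariant. Concretely, for $c=0$ the metric $du^2+u^{-3/2}\,dv^2$ on $(0,\infty)\times\mathbb{R}$ has $K=-21/(16u^2)$, satisfies $c-K>0$ and $\grad K\neq 0$, and its level curves have geodesic curvature exactly $3|\grad K|/(8(c-K))=3/(4u)$; yet here $G_u/G=-3/(4u)$ while $-3f'/(4f)=+3/(4u)$, so no biconservative immersion with $\grad f\neq 0$ exists. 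What rules out this wrong sign is condition~(iii) of Theorem~\ref{thm:CMOP}, namely $(c-K)\Delta K-|\grad K|^2-\tfrac{8}{3}K(c-K)^2=0$, which fails for the example above. Either this third condition (or an equivalent signed-curvature convention for the level curves) is implicitly part of the hypothesis in \cite{FNO}, or an additional argument is needed; as written, your proof does not close this step.
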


Even more, one can prove that any biconservative immersion from $(M^2,g)$ in $N^3(c)$ must satisfy $\grad f\neq 0$, and so one obtains the next result.

\begin{theorem}[\cite{SN-Thesis}]\label{thm:reformulate}
Let $\left(M^2,g\right)$ be an abstract surface and $c\in\mathbb{R}$ an arbitrarily given constant. Assume that $c-K>0$ and $\grad K\neq0$ at any point of $M$, and the level curves of $K$ are circles in $M$ with constant curvature
$$
\kappa=\frac{3|\grad K|}{8(c-K)}.
$$
Then, locally, there exists a unique biconservative embedding $\phi:\left(M^2,g\right)\to N^3(c)$. Moreover, the mean curvature function is positive and its gradient is different from zero at any point of $M$.
\end{theorem}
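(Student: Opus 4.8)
The plan is to combine the intrinsic characterization in Theorem \ref{thm:char} with the uniqueness result in Theorem \ref{thm:uniqueness}, and then to upgrade the conclusion to the statement that \emph{every} biconservative immersion of $(M^2,g)$ into $N^3(c)$ automatically has nowhere-vanishing $\grad f$. First, observe that the hypotheses on $K$ — namely $c-K>0$, $\grad K\neq 0$ everywhere, and the level curves of $K$ being circles in $M$ with curvature $\kappa = 3|\grad K|/(8(c-K))$ — are exactly the conditions appearing in Theorem \ref{thm:char}. Hence that theorem yields, locally around each point of $M$, an isometric embedding $\phi:(M^2,g)\to N^3(c)$ that is biconservative with $\grad f\neq 0$ everywhere. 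This already produces the existence part; positivity of $f$ can be arranged by orienting $M$ appropriately (replacing $\eta$ by $-\eta$ if needed), since $f$ has no zeros on the relevant domain.

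Next I would address uniqueness. Suppose $\phi_1,\phi_2:(M^2,g)\to N^3(c)$ are two biconservative embeddings produced as above; both have $\grad f_i\neq 0$ at every point. Theorem \ref{thm:uniqueness} then applies verbatim and gives an isometry $\Psi$ of $N^3(c)$ with $\phi_2 = \Psi\circ\phi_1$. So among biconservative \emph{immersions with nonvanishing mean curvature gradient} the local embedding is unique up to ambient isometry.

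The remaining — and main — point is the claim that there is no "other" biconservative immersion to worry about: one must show that \emph{any} biconservative immersion $\phi:(M^2,g)\to N^3(c)$ necessarily satisfies $\grad f\neq 0$ at every point, so that the uniqueness above is genuinely unconditional. This is the step I expect to be the real obstacle, and it is where Theorem \ref{thm:CMOP} and the Gauss equation enter. The idea is: if $\grad f$ vanished on a nonempty set, consider its interior. On an open set where $\grad f\equiv 0$ the surface is CMC, so by Gauss $K = \det A + c$ with $f$ constant; one computes $K$ from the CMC biconservative structure and finds it is locally constant there, contradicting $\grad K\neq 0$. On the other hand, a point $p$ with $(\grad f)(p)=0$ that is a limit of points where $\grad f\neq 0$ leads to a contradiction via the relation in Theorem \ref{thm:CMOP}(i), $K=-3f^2+c$, together with (ii): near such points $c-K = 3f^2 \to 3f(p)^2$, while $c-K>0$ is forced on the open dense set where $\grad f\neq 0$, and the circle-curvature identity $\kappa = 3|\grad K|/(8(c-K))$ combined with $\grad K = -6f\,\grad f$ would force $\grad K\to 0$ at $p$, again contradicting the hypothesis $\grad K\neq 0$ everywhere. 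Hence the zero set of $\grad f$ is empty, and Theorems \ref{thm:char} and \ref{thm:uniqueness} deliver exactly the stated local existence and uniqueness of a biconservative embedding with $f>0$ and $\grad f\neq 0$ everywhere. $\qed$
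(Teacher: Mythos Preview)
Your overall strategy is exactly the one the paper sketches: obtain existence from Theorem~\ref{thm:char}, then argue that \emph{any} biconservative immersion of $(M^2,g)$ into $N^3(c)$ must have $\grad f\neq 0$ everywhere, after which Theorem~\ref{thm:uniqueness} yields uniqueness. Your ``boundary'' case is handled correctly: on the open set $V=\{\grad f\neq 0\}$ Theorem~\ref{thm:CMOP}(i) gives $K=-3f^2+c$, hence $\grad K=-6f\,\grad f$ there, and if $p\in\overline{V}$ with $\grad f(p)=0$ then continuity forces $\grad K(p)=0$, contradicting the hypothesis. A connectedness argument then shows that either $\grad f$ vanishes nowhere or it vanishes identically on $M$.

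The genuine gap is your treatment of the remaining case, where $\grad f\equiv 0$ on an open set. You assert that ``one computes $K$ from the CMC biconservative structure and finds it is locally constant there'', but this does not follow: every CMC surface in $N^3(c)$ is automatically biconservative, so the biconservative condition adds nothing once $f$ is constant, and CMC surfaces certainly need not have constant Gaussian curvature. What must actually be excluded is that the specific metric $g$ --- which already satisfies $c-K>0$, $\grad K\neq 0$, and the prescribed circle-curvature condition on the level sets of $K$ --- admits an isometric CMC immersion into $N^3(c)$. This requires using the explicit local form of $g$ induced by the hypotheses (e.g., coordinates in which $g=du^2+h(u)^2\,dv^2$ and $K=K(u)$) together with the Codazzi equation for a would-be shape operator with constant trace, and checking that the resulting overdetermined system is incompatible. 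That argument is the substantive step and is not supplied in your proposal; the paper itself only records that ``one can prove'' this, deferring the details to \cite{SN-Thesis}.
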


\begin{remark} For a given $C$, there exists a one parametric family of abstract standard biconservative surfaces indexed by $C$, and their metrics can be written in a very explicit way.
\end{remark}

In the case when the ambient space is the Euclidian sphere, the extrinsic approach proved to be quite difficult, as it implies the gluing of a large number (infinite, in general) of standard biconservative surfaces. When working intrinsically, one first used a gluing procedure, to obtain a closed rotational surface in $\mathbb{R}^3$ (and so complete), with a periodic profile curve, and then, from that surface (or its universal cover $(\mathbb{R}^2,g_{C_1})$, that can be factorized to a non-flat torus), one constructed the desired non-CMC biconservative immersion (see \cite[Theorem 4.18]{SN-JGP}).

The hyperbolic case ($c=-1$) is rather similar to the $c=0$ one. In an extrinsic approach one has to glue two standard biconservative surfaces to obtain complete biconservative surfaces which are not CMC (see \cite{NO-NLA}). In the intrinsic manner, in \cite{NO-NLA} one proposes a new method that can be easily adapted to the other two cases, when $c=0$ or $c=1$. The main idea is to rewrite the metric on an abstract standard biconservative surface as
$$
g(u,v)=h^2(u)dv^2+du^2,\quad u>0,\quad v\in\mathbb{R},
$$
where $h(u)>0$ and $\lim_{u\rightarrow 0}h(u)\in\mathbb{R}_{+}^{\ast}$. Then one glues two such abstract standard biconservative surfaces along the axis $(Ov)$ to obtain an abstract simply connected complete surface $(\mathbb{R}^2,g_{C_{-1}})$ that cannot be factorized to a torus. To prove the existence of the non-CMC biconservative immersion from this surface in $\mathbb{H}^3$, one constructs a suitable shape operator and then uses the Fundamental Theorem of Surfaces in space forms. The difficult part of this construction is the fact that  $\grad K$ vanishes along the axis $(Ov)$ and, therefore, one cannot directly apply Theorem \ref{thm:char}.

We note that all abstract simply connected complete surfaces $(\mathbb{R}^2,g_{C_{-1}})$, $(\mathbb{R}^2,g_{C_{0}})$ and $(\mathbb{R}^2,g_{C_{1}})$ are globally conformally equivalent to $\mathbb{R}^2$.

With all these results in mind, two interesting issues can be raised.

\vspace{0.5cm}

\noindent {\bf Problem 1.} Let $\phi:M^2\rightarrow N^3(c)$ be a simply connected complete biconservative surface which is not CMC. Then, up to an isometry of the domain or the codomain, $M$ and $\phi$ are those given in \cite{SN-JGP}, when $c=0$ or $c=1$, and in \cite{NO-NLA}, when $c=-1$.

\vspace{0.2cm}

\noindent {\bf Problem 2.} Any compact biconservative surface in $N^3(c)$ is CMC.

\vspace{0.5cm}

Very recently, Problem 1, was positively answered in \cite{NO20}, and one the most important steps of the proof consisted in proving that (for the $c=-1$ case) from a given abstract surface $(\mathbb{R}^2,g_{c_{-1}})$ there exists a unique biconservative immersion in $\mathbb{H}^3$. Moreover, it is not CMC. The proof of this step relies, among others, on Theorem \ref{thm:reformulate}, but here $\grad K$ vanishes along $(Ov)$. This intermediary result in the proof of Problem 1 also holds when $c=0$ or $c=1$.

Now, Problem 2 can be easily solved for $c=0$ or $c=-1$. More precisely, one considers the universal cover $\pi:\tilde M^2\rightarrow M^2$ of a compact non-CMC biconservative surface $M^2$ and thus one obtains a surface $\tilde\phi:\tilde M^2\rightarrow\mathbb{R}^3$ or $\tilde\phi:\tilde M^2\rightarrow\mathbb{H}^3$, which is simply connected, complete, non-CMC and biconservative. But then, the immersion $\tilde\phi$ would have to be one of those given in \cite{SN-JGP} and \cite{NO-NLA}, and, in particular, $\tilde{M}^2=(\mathbb{R}^2,g_{C_{-1}})$ or $\tilde{M}^2=(\mathbb{R}^2,g_{C_{0}})$, for some constants $C_{-1}$ and $C_{0}$. But this is a contradiction as $\tilde{M}^2=(\mathbb{R}^2,g_{C_{-1}})$ or $\tilde{M}^2=(\mathbb{R}^2,g_{C_{0}})$ can be factorized only to a cylinder (which is
not compact). The case $c=1$ in Problem 2 cannot be solved as easily as the other two cases, even if the abstract surface $(\mathbb{R}^2,g_{C_{1}})$ does factorize to a non-flat torus, and the author could not prove whether (some of) the immersions in \cite{SN-JGP} are double-periodic or not.

In fact, Problem 2 was completely solved in \cite{MP}. Without addressing Problem 1, the authors prove that while in $N^3(c)$, with $c=-1$ or $c=0$, all compact biconservative surfaces are CMC, in the case $c=1$, there exists an entire family of compact non-CMC biconservative surfaces. In other words, among the immersions given \cite{SN-JGP} in the case of the sphere, there exists a family of double-periodic ones. To prove the existence of this family of surfaces in $\mathbb{S}^3$, the authors used the known fact that the standard biconservative surface in the sphere is rotational and that the curvature of its profile curve, that lies in a sphere $\mathbb{S}^2$ totally geodesic in $\mathbb{S}^3$,  satisfies a certain equation (see Theorem \ref{thm:CMOP33}). They proved, using among other results the Poincare-Bendixon Theorem, that the maximally defined solutions of this equation are defined on the entire real axis $\mathbb{R}$ and are periodic. Now, having these periodic curvature functions and using closure conditions from the theory of elastic curves (see \cite{AGM}), the authors showed that there exist periodic profile curves in $\mathbb{S}^2$ having as curvatures some of these periodic curvature functions. Obviously, the corresponding surface to a periodic profile curve is compact, biconservative and non-CMC.

\section{Hypersurfaces in space forms}

\subsection{Biharmonic hypersurfaces}

All results obtained until now for biharmonic hypersurfaces in spheres are partial positive answers to the following two conjectures, which have been proposed in \cite{BMO}.

\vspace{0.5cm}

\begin{conjecture}\label{c1} Any proper biharmonic hypersurface in $\mathbb{S}^{m+1}$ is either an open part of $\mathbb{S}^{m}(1/\sqrt{2})$, or an open part of $\mathbb{S}^{m_1}(1/\sqrt{2})\times\mathbb{S}^{m_2}(1/\sqrt{2})$, where $m_1+m_2=m$ and $m_1\neq m_2$.
\end{conjecture}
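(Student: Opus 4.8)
Since the statement is a conjecture, what follows is a program rather than a complete proof; I will indicate where it is known to close and where the genuine obstruction sits. Write $f=(1/m)\trace A$ for the mean curvature function of $M^m\subset\mathbb{S}^{m+1}$. By the characterization Corollary above, $M$ is proper biharmonic precisely when
$$
\Delta f+(|A|^2-m)f=0,\qquad 2A(\grad f)+mf\,\grad f=0,\qquad f\not\equiv 0.
$$
The conjecture is a \emph{local} statement (``open part''), so no compactness or global integration is available, and everything must be extracted from this PDE system together with the Gauss and Codazzi equations. The program splits in two: (I) show that every proper biharmonic hypersurface of $\mathbb{S}^{m+1}$ is CMC, i.e.\ $\grad f\equiv 0$; and (II) classify the CMC proper biharmonic ones, which automatically satisfy $|A|^2=m$.

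For (I), the plan is to argue by contradiction on the open set $U=\{\grad f\neq 0\}$. There the second equation forces $e_1:=\grad f/|\grad f|$ to be a principal direction with principal curvature $-mf/2$; diagonalising $A$ by a local orthonormal frame $\{e_1,\dots,e_m\}$ with $Ae_1=-(mf/2)e_1$ and $Ae_i=\lambda_ie_i$ gives $\sum_{i\geq 2}\lambda_i=\tfrac{3m}{2}f$. Feeding this splitting into the Codazzi equation $(\nabla_XA)Y=(\nabla_YA)X$ --- the same kind of computation that, for biconservative hypersurfaces, yields the flat normal connection of the level sets of $f$ quoted earlier --- one gets $e_if=0$ for $i\geq 2$, so $f$ depends only on the flow parameter of $e_1$, and each $\lambda_i$ obeys an ODE along the integral curves of $e_1$. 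Combining these with the first biharmonic equation, rewritten as a second-order ODE for $f$ along $e_1$ with coefficients built from the $\lambda_i$, one wants to show that the resulting overdetermined system has no nonconstant solution compatible with the frame extending smoothly to $\partial U$. This is exactly what closes when $A$ has at most two distinct principal curvatures on $U$: the $\lambda_i$ coincide, the system collapses to a single scalar ODE for $f$ with no admissible nonconstant solution; the same holds, with more effort, when there are few distinct principal curvatures or the ambient dimension is small, and for $m=2$ one recovers the classification of proper biharmonic surfaces in $\mathbb{S}^3$ quoted above.

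For (II), once $f$ is a nonzero constant the second equation holds automatically and the first forces $|A|^2=m$ \emph{pointwise}. The remaining task is to show that a CMC hypersurface of $\mathbb{S}^{m+1}$ with $|A|^2=m$ and $f\neq0$ is, locally, an open part of $\mathbb{S}^m(1/\sqrt2)$ or of $\mathbb{S}^{m_1}(1/\sqrt2)\times\mathbb{S}^{m_2}(1/\sqrt2)$. If one can upgrade CMC together with $|A|^2=m$ to constancy of the principal curvatures (an isoparametric-type rigidity), or better to a bound of $2$ on the number of distinct ones, then the algebraic constraints $\sum\lambda_i=mf$ and $\sum\lambda_i^2=m$ leave only ``all $\lambda_i=1$'' --- the umbilic small sphere --- or ``$\lambda_i\in\{1,-1\}$'' --- the Clifford-type product --- and the classical theory of hypersurfaces of $\mathbb{S}^{m+1}$ with two distinct constant principal curvatures identifies $M$; the condition $m_1\neq m_2$ is forced by $f\neq0$.

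The main obstacle, in both (I) and (II), is the absence of any a priori control on the number of distinct principal curvatures and on the geometry of the level hypersurfaces of $f$. When this number is at most $2$, the Codazzi relations decouple into a single solvable ODE in (I) and the rigidity in (II) is classical; with three or more principal curvatures the structure equations no longer separate, the $\lambda_i$ satisfy a coupled nonlinear system whose behaviour near $\partial U$ is hard to control, and --- lacking compactness --- one cannot fall back on Bochner formulas, Simons-type inequalities or integral estimates. Removing this restriction, rather than any single computation, is why the conjecture is still open and is presently known only under extra hypotheses: bounds on the number of principal curvatures, small ambient dimension, or sign conditions on the curvature, as in the results cited above.
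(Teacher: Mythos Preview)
The statement is a \emph{conjecture}, and the paper does not prove it; it only surveys partial results. Your proposal correctly recognises this and offers a program rather than a proof, so there is no ``paper's proof'' to compare against. Your two-step split --- (I) show CMC, then (II) classify CMC proper biharmonic hypersurfaces with $|A|^2=m$ --- matches exactly how the paper frames the problem: step (I) is precisely Conjecture~\ref{c2}, which the paper explicitly calls ``a step in the proof of Conjecture~\ref{c1}'', and step (II) is identified there as a CMC generalisation of the Chern--do Carmo--Kobayashi theorem, described as ``a real challenge, even in the compact case''. Your diagnosis of the obstruction --- no a priori bound on the number of distinct principal curvatures, and no access to integral methods in the local setting --- is also what the paper's survey of known partial results (bounded number of principal curvatures, low dimension, curvature sign conditions, compactness) implicitly conveys.

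One small inaccuracy: the fact that $e_if=0$ for $i\geq 2$ is immediate from $e_1=\grad f/|\grad f|$, not a consequence of Codazzi; what Codazzi gives you is the system of ODEs for the $\lambda_i$ along the integral curves of $e_1$ and the connection coefficients of the frame. Also, in (II) you should be aware that even with two distinct constant principal curvatures the identification with $\mathbb{S}^{m_1}(1/\sqrt2)\times\mathbb{S}^{m_2}(1/\sqrt2)$ is not purely algebraic: one still needs the classical structure theorem for isoparametric hypersurfaces with two principal curvatures (this is what Theorem~\ref{BMO2curv} in the paper invokes). These are minor; your overall assessment of where the genuine difficulty lies is accurate.
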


\vspace{0.2cm}

\begin{conjecture}\label{c2} Any proper biharmonic submanifold in $\mathbb{S}^{m+1}$ is CMC.
\end{conjecture}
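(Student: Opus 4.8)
\medskip
\noindent\emph{A strategy toward the conjecture.}
Since Conjecture~\ref{c2} concerns submanifolds of arbitrary codimension, the first step is to pass to the tangent and normal parts of the biharmonic equation recorded above: a proper biharmonic submanifold $\phi:M^m\to\mathbb{S}^n$ has $H\neq0$, a priori non-constant $|H|$, and satisfies
$$\Delta^{\perp}H+\trace B(\cdot,A_H\cdot)-mH=0,\qquad 4\trace(\nabla A_H)(\cdot,\cdot)=m\grad(|H|^2).$$
The aim is to show that $U=\{p\in M:(\grad|H|^2)(p)\neq0\}$ is empty. I would treat the compact and the complete non-compact cases separately and, in each, settle first the codimension-one case, where the explicit system from the corollary above,
$$\Delta f+(|A|^2-m)f=0,\qquad A(\grad f)=-\tfrac{m}{2}f\,\grad f,$$
is at hand ($f$ the mean curvature function), and only afterwards address higher codimension, where the known partial results (pseudo-umbilical with $m\neq4$, PNMC) suggest first trying to force parallelism of the normalized mean curvature vector.

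\medskip
For the compact hypersurface case the plan is as follows. Run the maximum principle on $f^2$, via the identity $\tfrac12\Delta(f^2)=(m-|A|^2)f^2-|\grad f|^2$ obtained from the first equation, to get $|A|^2\le m$ at a maximum of $f^2$ and hence, using the universal inequality $|A|^2\ge mf^2$, the bound $|H|\le1$. To upgrade ``$\le1$'' to ``$=\cst$'', combine the two biharmonic equations with the identity $mf\Delta f-3m|\grad f|^2-2\langle A,\Hess f\rangle=0$ from the corollary above and with a Simons-type formula for $\Delta|A|^2$; contracting the Codazzi equation along $e_1=\grad f/|\grad f|$ and using $A(e_1)=-\tfrac{m}{2}f\,e_1$ on $U$ should express the troublesome quantities ($\Div A$, $e_1(|A|^2)$, $\langle A,\Hess f\rangle$) through $f$ and its first and second derivatives, and an integration by parts over the compact $M$ is then meant to yield an identity $\int_M(\text{non-negative})=0$ forcing $\grad f\equiv0$. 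This program is known to close when $M$ has at most two distinct principal curvatures: the multiplicity of $-mf/2$ is locally constant on $U$, the trace constraint turns the remaining principal curvature into an explicit function of $f$, so $|A|^2=|A|^2(f)$, and the first equation becomes an ODE along the flow of $e_1$ admitting no non-constant solution compatible with $U$ being relatively compact.

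\medskip
The decisive obstacle --- and the reason the conjecture is still open --- is precisely this closure: once $M$ has three or more distinct principal curvatures, $|A|^2$ is no longer a function of $f$, the Simons identity carries uncontrolled cross terms among the non-$e_1$ principal directions, and both the integral estimate and the ODE reduction break down; in higher codimension the analogous difficulty is that the biharmonic system does not couple $A_H$ to the full second fundamental form $B$, and in the complete non-compact case one additionally lacks an a priori bound on $|H|$ at infinity. A complete proof would have to supply a substitute for this closure --- an a priori bound on the number of distinct principal curvatures (respectively on the complexity of $B$), plausibly extracted from the still-unexploited integrability and flat-normal-connection of the distribution $\mathcal D=(\grad f)^{\perp}$ recorded above, or else a global argument, such as an $L^2$ estimate, that never passes through $|A|^2$.
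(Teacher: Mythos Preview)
This statement is a \emph{conjecture}, and the paper contains no proof of it; the text immediately following the two conjectures says explicitly that ``a proof for the second conjecture \ldots\ has not been obtained for any dimension $m$ and without additional hypotheses.'' Your write-up is therefore appropriate in spirit: you do not claim a proof, you outline a strategy, and you name the obstruction.

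Your diagnosis of where the difficulty lies is accurate and matches the paper's own narrative. The maximum-principle computation on $f^2$ that you sketch is exactly what yields Theorem~\ref{prop:|B|>m}; the Simons-type route via $\Delta|A|^2$ is what underlies Theorems~\ref{th: jchen1} and~\ref{thmJHC} (and the Chen inequality \eqref{prewk}); and the observation that the program closes when there are at most two distinct principal curvatures is precisely Theorem~\ref{BMO2curv}. The paper then records the successive extensions to three principal curvatures (Theorems~\ref{th:curb_med}, \ref{thfu}), to $m=4$ (the Guan--Li--Vrancken result), and to six under constant scalar curvature (Fu--Hong), all of which confirm your point that the bottleneck is controlling the number of distinct principal curvatures so that $|A|^2$ becomes a function of $f$ and the system reduces to an ODE.

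One small correction: your integral/ODE ``closure'' in the two-principal-curvature case is not obtained in the paper by an integration-by-parts identity forcing $\grad f\equiv0$; rather, one shows directly (via Codazzi and the tangent equation) that the principal curvatures are constant, so the hypersurface is isoparametric, and then invokes Theorem~\ref{teo:isoparametric}. Also, the suggestion that the flat-normal-connection property of the leaves of $\mathcal D=(\grad f)^{\perp}$ might supply the missing closure is speculative; the paper records that property but does not indicate it resolves the higher-multiplicity case. In summary: there is no proof in the paper to compare against, and your strategic overview is consistent with the current state of the problem as the paper presents it.
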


\vspace{0.5cm}

Obviously, Conjecture \ref{c2} (for hypersurfaces) is weaker than Conjecture \ref{c1}, but to directly prove the first conjecture seems to be quite a complicated task. Until now, a proof for the second conjecture, which may be viewed as a step in the proof of Conjecture \ref{c1}, has not been obtain for any dimension $m$ and without additional hypotheses. Moreover, even with Conjecture \ref{c2} proved, the proof of Conjecture \ref{c1} will be a real challenge, even in the compact case.

It is worth mentioning that if $M$ is a CMC hypersurface, then it is proper biharmonic if and only if $|A|^2=m$ and $f\neq 0$. Therefore, to study CMC proper biharmonic hypersurfaces and give a positive answer to the first conjecture is to study non-minimal CMC hypersurfaces with $|A|^2=m$ and prove that they can only be open parts of $\mathbb{S}^{m}(1/\sqrt{2})$, or $\mathbb{S}^{m_1}(1/\sqrt{2})\times\mathbb{S}^{m_2}(1/\sqrt{2})$, where $m_1+m_2=m$ and $m_1\neq m_2$. This challenging problem can be viewed as a generalization, from the minimal to the CMC case, of the classical result of S. S. Chern, M. do Carmo, and S. Kobayashi, that says that any minimal hypersurface with $|A|^2=m$ is an open part of $\mathbb{S}^{m_1}(r_1)\times\mathbb{S}^{m_2}(r_2)$, where $m_1+m_2=m$, $r^2_1+r^2_2=1$, and $r_1=\sqrt{m_1/m}$ (see \cite{CdCK}).

We will present, in a quite non-exhaustive way, the most important results to date on biharmonic hypersurfaces in spheres. As presentations of this type of results can be also found in \cite{O} and in \cite{ChenOu}, in general, we will focus on the most recent of them.

Conjecture \ref{c1} was proved only in the case of surfaces in \cite{CMO1}. Conjecture \ref{c2} was proved for $m=3$ in \cite{BMO10}. Very recently, in \cite{GLV}, Conjecture \ref{c2} was also proved for $m=4$. The proof of this last result is a long and skilful one and relies on a detailed analysis of the Gauss and Codazzi equations that allow the biharmonic hypothesis. This result actually extends to a non-flat $5$-dimensional space form the one obtained by Y. Fu, M.-C. Hong, and X. Zhan in \cite{FHZ}, which shows that any biharmonic hypersurface in $\mathbb{R}^5$ is CMC and, therefore, minimal.

With geometric or analytical additional hypotheses, the two conjectures have received positive answers in several situations. The geometric hypotheses concern the number of distinct principal curvatures of the hypersurface, or ask the scalar curvature to be constant, or the squared length of the second fundamental form to be bounded by $m$, etc.

Before listing some of these results, we recall that a CMC proper biharmonic submanifold satisfies $|H|\in(0,1]$. In the case of CMC hypersurfaces, like in the more general case of PMC submanifolds, there is a gap in the admissible range of $|H|$.

\begin{theorem}[\cite{BO}]
Let $\phi:M^m\rightarrow\mathbb{S}^{m+1}$ be a CMC proper biharmonic hypersurface with $m>2$. Then $|H|\in(0,(m-2)/m]\cup\{1\}$. Moreover, $|H|=1$ if and only if $\phi(M)$ is an open subset of the small hypersphere $\mathbb{S}^m(1/\sqrt{2})$, and $|H|=(m-2)/m$ if and only if $\phi(M)$ is an open subset of the standard product $\mathbb{S}^{m-1}(1/\sqrt{2})\times\mathbb{S}^1(1/\sqrt{2})$.
\end{theorem}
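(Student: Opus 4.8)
The plan is to combine the CMC proper biharmonic equations for hypersurfaces with the known algebraic identity $|A|^2 = m$ and a careful analysis of the spectrum of the shape operator $A$. Recall from the corollary on hypersurfaces in $N^{m+1}(c)$ (with $c=1$) that a CMC biharmonic hypersurface satisfies $|A|^2 = m$ (and $f\neq 0$ in the proper case), while $f = |H|$ is constant. Writing $\lambda_1,\dots,\lambda_m$ for the principal curvatures, we have $\sum_i \lambda_i = mf$ and $\sum_i \lambda_i^2 = m$. The first step is to exploit the biconservativity equation $A(\grad f) = -\tfrac{m}{2}f\grad f$, which is trivially satisfied since $\grad f = 0$; so the real content is to differentiate the Codazzi equation along $M$ and extract pointwise restrictions on the $\lambda_i$. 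The key observation, going back to the argument in \cite{BO}, is that constancy of $f$ and of $|A|^2$ forces, via the Codazzi equation and a Simons-type identity, that the principal curvatures satisfy an additional polynomial constraint — in effect that at least one symmetric function of the $\lambda_i$ beyond the first two is controlled.

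Next I would set up the optimization problem explicitly: among all $(\lambda_1,\dots,\lambda_m)$ with $\sum \lambda_i = mf$ and $\sum \lambda_i^2 = m$, determine the range of $f = |H|$ for which the hypersurface can actually be realized as biharmonic. The upper bound $|H|\le 1$ is already known from the general CMC proper biharmonic bound. The heart of the matter is the gap: one shows that $|H|\in((m-2)/m, 1)$ is impossible. For this I would argue that in the open interval the constraints on the $\lambda_i$ (the two power-sum equations together with the extra Codazzi-derived constraint) force the shape operator to have exactly two distinct eigenvalues, and then solve the resulting finite system. If $A$ has two distinct principal curvatures $\mu$ (multiplicity $p$) and $\nu$ (multiplicity $q$), $p+q=m$, then $p\mu + q\nu = mf$ and $p\mu^2 + q\nu^2 = m$; biharmonicity via the normal equation $\Delta f + (|A|^2 - m)f = 0$ is automatic, but the Codazzi equation for a hypersurface with two principal curvatures and $\nabla f = 0$ forces the corresponding distributions to be integrable with the eigenvalues constant, leading through the Gauss equation to a specific product structure. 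One then checks that the admissible solutions are precisely $\mathbb{S}^m(1/\sqrt 2)$ (the umbilical case, $|H|=1$) and $\mathbb{S}^{m-1}(1/\sqrt2)\times\mathbb{S}^1(1/\sqrt2)$ (with $p=m-1$, $q=1$, giving $|H|=(m-2)/m$), with no admissible configuration in between.

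To finish, I would verify the two extremal cases directly: for $|H|=1$, the bound $|H|\le 1$ from the earlier proposition already gives that $M$ lies minimally in $\mathbb{S}^m(1/\sqrt2)$, hence is an open part of it; for $|H|=(m-2)/m$, one computes that the product $\mathbb{S}^{m-1}(1/\sqrt2)\times\mathbb{S}^1(1/\sqrt2)$ has $|A|^2=m$ and constant mean curvature $(m-2)/m$, so it is proper biharmonic, and conversely the two-principal-curvature analysis above pins $M$ down to an open part of this product. The converse implications in both cases are the "moreover" claims.

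I expect the main obstacle to be establishing rigorously that in the forbidden range the shape operator must have exactly two distinct eigenvalues — a priori $A$ could have three or more principal curvatures, and ruling this out requires the full strength of the Codazzi equation together with the biharmonic (Simons-type) identity, not merely the two power-sum constraints. Handling the possibility of non-constant principal curvatures (even though $f$ and $|A|^2$ are constant) and showing the relevant distributions are autoparallel is the technically delicate step; once the eigenvalue count is nailed down, the remaining computation is a short algebraic exercise.
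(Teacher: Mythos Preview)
Your setup is correct up to the point where you have $|A|^2=m$, $\sum\lambda_i=mf$, $\sum\lambda_i^2=m$, and $|H|\le 1$. The gap is in how you propose to rule out $|H|\in((m-2)/m,1)$. You aim to show that in this range $A$ must have exactly two distinct eigenvalues and then invoke the two--principal--curvature classification. But you never identify a mechanism that would force the eigenvalue count down to two, and in fact the actual proof does not pass through that intermediate step at all.

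What the argument in \cite{BO} actually does is the following. Since $|A|^2=m$ is constant and $f$ is constant, the Simons--Nomizu--Smyth formula (equation \eqref{wellknown} in the paper, specialised to $\grad f=0$) reduces to
\[
0=-|\nabla A|^2-\tfrac{1}{2}\sum_{i,j}(\lambda_i-\lambda_j)^2(1+\lambda_i\lambda_j),
\]
which after expanding the curvature term gives $|\nabla A|^2=mf\bigl(mf-\trace A^3\bigr)\ge 0$, hence $\trace A^3\le mf$. This is the ``extra constraint'' you allude to but do not pin down. One then rewrites everything in terms of the traceless part $B=A-fI$, so that $|B|^2=m(1-f^2)$ and the inequality becomes $-\trace B^3\ge 2mf(1-f^2)$. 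The decisive ingredient you are missing is Okumura's algebraic lemma: for any traceless symmetric $B$ on $\mathbb{R}^m$,
\[
|\trace B^3|\le \frac{m-2}{\sqrt{m(m-1)}}\,|B|^3,
\]
with equality iff $B$ has an eigenvalue of multiplicity $m-1$. Feeding $|B|^2=m(1-f^2)$ into this and comparing with the lower bound on $-\trace B^3$ yields, after one line of algebra, $m^2f^2\le (m-2)^2$, i.e.\ $f\le (m-2)/m$ whenever $f<1$. The two--eigenvalue structure appears only \emph{a posteriori}, in the equality case $f=(m-2)/m$, via the equality condition in Okumura's lemma; it is not an intermediate hypothesis to be established on the forbidden interval. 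Once you know $A$ has two eigenvalues with multiplicities $m-1$ and $1$ at the boundary value, Theorem~\ref{BMO2curv} identifies the product $\mathbb{S}^{m-1}(1/\sqrt2)\times\mathbb{S}^1(1/\sqrt2)$.

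So the concrete repair is: replace ``reduce to two eigenvalues, then classify'' by ``extract $\trace A^3\le mf$ from the constancy of $|A|^2$ via Simons, then apply Okumura's inequality to the traceless shape operator''. Your final paragraph correctly anticipates that the two power sums alone are insufficient; the missing third relation is exactly the cubic trace bound, and the missing tool converting it into the gap is Okumura's lemma.
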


Among the results answering positively to the two conjectures, when adding the hypothesis on the number of distinct principal curvatures, we mention the following theorem.

\begin{theorem}[\cite{BMO}]\label{BMO2curv}
Let $\phi:M^m\rightarrow\mathbb{S}^{m+1}$ be a proper biharmonic hypersurface with at most two distinct principal curvatures everywhere. Then
$\phi(M)$ is either an open part of $\mathbb{S}^m(1/\sqrt{2})$, or an open part of $\mathbb{S}^{m_1}(1/\sqrt{2})\times\mathbb{S}^{m_2}(1/\sqrt{2})$, $m_1 + m_2 = m$, $m_1\neq m_2$. Moreover, if $M$ is complete, then either $\phi(M)=\mathbb{S}^{m}(1/\sqrt{2})$ and $\phi$ is an embedding, or $\phi(M) =\mathbb{S}^{m_1}(1/\sqrt{2})\times\mathbb{S}^{m_2}(1\sqrt{2})$, $m_1 +m_2 = m$, $m_1\neq m_2$,
and $\phi$ is an embedding when $m_1\geq 2$ and $m_2\geq 2$.
\end{theorem}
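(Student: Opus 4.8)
\smallskip
\noindent\emph{Sketch of a proof.}
The plan is to first prove that $M$ is CMC and then invoke the classification of isoparametric hypersurfaces with at most two distinct principal curvatures. Write $f=(1/m)\trace A$ for the mean curvature function; by the Corollary above with $c=1$, $\phi$ is biharmonic if and only if
\begin{equation*}
\Delta f+(|A|^2-m)f=0,\qquad 2A(\grad f)+mf\,\grad f=0,
\end{equation*}
and, $\phi$ being proper, $f$ vanishes on no open set. If $m=2$ the hypothesis on principal curvatures is empty and the statement is the classification of proper biharmonic surfaces in $\mathbb{S}^3$ already presented, so assume $m\ge 3$. On the open set $U=\{p\in M:(\grad f)(p)\ne 0\}$ the second equation says that $E_1:=\grad f/|\grad f|$ is a principal direction with principal curvature $-mf/2$. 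If that eigenvalue had multiplicity $\ge 2$, Codazzi's equation would force it to be constant along its eigendistribution, in particular along $E_1$; but $E_1(-mf/2)=-\tfrac m2|\grad f|\ne 0$ on $U$, so its multiplicity is $1$. Hence, on $U$ (away from umbilical points, where $f=0$ and which form a nowhere dense set), there are exactly two distinct principal curvatures and the relation $\trace A=mf$ gives $\lambda_1=-mf/2$ of multiplicity $1$ and $\lambda_2=\tfrac{3mf}{2(m-1)}$ of multiplicity $m-1$; in particular $|A|^2=\tfrac{m^2(m+8)}{4(m-1)}f^2$ is a function of $f$ alone.

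I would then read off the local structure near a point of $U$. Codazzi's equation shows that the integral curves of $E_1$ are geodesics, that the $\lambda_2$-eigendistribution $\mathcal D=E_1^{\perp}$, which is precisely the tangent distribution to the level hypersurfaces of $f$, is integrable with totally umbilical leaves, and that the umbilicity function of those leaves is $\tfrac{3|\grad f|}{(m+2)f}$. Thus, locally, $M$ is a warped product $I\times_\rho F$ over an interval, where the fibre $F$ has constant sectional curvature $\kappa_F$; integrating $\rho'/\rho=\tfrac{3|\grad f|}{(m+2)f}$, with $|\grad f|^2=(f')^2$ along the $E_1$-geodesics, gives $\rho=Cf^{3/(m+2)}$ for some constant $C>0$. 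Computing $\Delta f$ in this adapted frame turns the first biharmonic equation into a second-order ODE for $f$; combining it with the Gauss equation for the $2$-planes spanned by $E_1$ and a direction of $\mathcal D$, namely $-\rho''/\rho=1+\lambda_1\lambda_2$, and eliminating $f''$, one obtains the first integral
\begin{equation*}
(f')^2=\frac{m+2}{3}f^2-\frac{3m^2(m+2)}{4(m-1)^2}f^4.
\end{equation*}

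The contradiction then comes from the remaining Gauss equation, for the $2$-planes tangent to $\mathcal D$, which reads $\bigl(\kappa_F-(\rho')^2\bigr)/\rho^2=1+\lambda_2^2$. Substituting $\rho=Cf^{3/(m+2)}$, $\lambda_2=\tfrac{3mf}{2(m-1)}$ and the first integral above, the right-hand side becomes a nonconstant function of $f$ — a sum of two distinct powers of $f$ with nonzero coefficients, since $m\ge 3$ and $C\ne 0$ — while the left-hand side is the constant $\kappa_F$ and $f$ is nonconstant on $U$. This is impossible; hence $U=\emptyset$ and $f$ is constant.

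It remains to handle the CMC case. With $f$ constant and $\phi$ proper biharmonic, the first biharmonic equation forces $|A|^2=m$ and $f\ne 0$, so $M$ is an isoparametric hypersurface of $\mathbb{S}^{m+1}$ with exactly one or exactly two distinct constant principal curvatures. By the classical classification of such hypersurfaces, $M$ is an open part either of an umbilical hypersphere $\mathbb{S}^m(r)$ or of a standard product $\mathbb{S}^{m_1}(r_1)\times\mathbb{S}^{m_2}(r_2)$ with $r_1^2+r_2^2=1$; imposing $|A|^2=m$ gives $r=1/\sqrt2$ in the first case, and in the second forces either $r_1^2=m_1/m$ — the minimal generalized Clifford torus, excluded since then $f=0$ — or $r_1=r_2=1/\sqrt2$, the latter having $f\ne 0$ exactly when $m_1\ne m_2$. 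This proves the local statement. If $M$ is complete, then $\phi$, as a local isometry onto an open subset of its model $\mathbb{S}^m(1/\sqrt2)$, resp.\ $\mathbb{S}^{m_1}(1/\sqrt2)\times\mathbb{S}^{m_2}(1/\sqrt2)$, is a Riemannian covering of the model; the model is simply connected for $m\ge 2$, resp.\ when $m_1,m_2\ge 2$, so the covering is a diffeomorphism and $\phi$ an embedding onto it, while if a factor is a circle the covering is still surjective, so $\phi(M)$ is the whole product. The main difficulty is the reduction to the CMC case: without the hypothesis on the principal curvatures this is Conjecture~\ref{c2}, still open; it is precisely the two-principal-curvature assumption that makes $M$ a warped product and lets one squeeze the over-determined Gauss--Codazzi--biharmonic system down to the impossible identity above.
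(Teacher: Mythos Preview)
The present paper is a survey and does not include a proof of this theorem: it is quoted from \cite{BMO} and then used as a black box in the proofs of Theorem~\ref{thmJHC} and Proposition~\ref{p4.20}. So there is no ``paper's own proof'' to compare your sketch against.

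That said, your outline is essentially the argument of \cite{BMO}: on the set where $\grad f\neq 0$ one shows $\lambda_1=-mf/2$ has multiplicity one, computes $\lambda_2$ from the trace, derives from Codazzi that the $\lambda_2$-leaves are umbilical (hence of constant curvature) and that $M$ is locally a warped product with warping function a power of $f$, and then shows the resulting Gauss--Codazzi--biharmonic system for $f$ along the $E_1$-geodesics is overdetermined. The specific first integral you display,
\[
(f')^2=\tfrac{m+2}{3}f^2-\tfrac{3m^2(m+2)}{4(m-1)^2}f^4,
\]
does not seem to be quite right (eliminating $f''$ between the normal biharmonic equation and the mixed Gauss equation gives different coefficients and a positive sign on the $f^4$-term), but this does not affect the logic: whatever relation $(f')^2=af^2+bf^4$ one obtains, substituting it into the fibre Gauss equation $(\kappa_F-(\rho')^2)/\rho^2=1+\lambda_2^2$ with $\rho=Cf^{3/(m+2)}$ produces a combination of the two distinct powers $f^{6/(m+2)}$ and $f^{(2m+10)/(m+2)}$ with nonzero coefficients, which cannot equal the constant $\kappa_F$. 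One point you pass over quickly and should justify is that $\kappa_F$ is indeed constant: the leaves of $\mathcal D$ are totally umbilical both in $M$ and in $\mathbb{S}^{m+1}$, hence have constant sectional curvature, and this constant is independent of the leaf by the warped-product description. The CMC endgame and the completeness argument are standard and correctly sketched.
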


As a direct consequence of the above result, for $m=2$ we reobtain the proof of Conjecture \ref{c1} in \cite{CMO1}.

The link between biharmonic hypersurfaces and the isoparametric ones was established by T.~Ichiyama, J. I. Inoguchi, and H.~Urakawa in 2009 and 2010 (see \cite{IIU1}, \cite{IIU}).

\begin{theorem}[\cite{IIU1},\cite{IIU}]\label{teo:isoparametric}
Let $\phi:M^m\rightarrow \mathbb{S}^{m+1}$ be an isoparametric proper biharmonic hypersurface. Then $\phi(M)$ is either an open part of $\mathbb{S}^m(1/\sqrt2)$, or an open part of
$\mathbb{S}^{m_1}(1/\sqrt2)\times\mathbb{S}^{m_2}(1/\sqrt2)$, $m_1+m_2=m$, $m_1\neq m_2$.
\end{theorem}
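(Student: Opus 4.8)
The plan is to reduce the problem, via the characterization formula for biharmonic hypersurfaces, to a classification of isoparametric hypersurfaces of $\mathbb{S}^{m+1}$ with $|A|^2=m$, and then to extract the answer from M\"unzner's structure theory together with two elementary trigonometric identities. Since $M$ is isoparametric, both $f=(1/m)\trace A$ and $|A|^2$ are constant, so $\grad f=0$. In the characterization of biharmonic hypersurfaces in $N^{m+1}(c)$ with $c=1$, the tangential equation $2A(\grad f)+mf\grad f=0$ is then automatically satisfied, while the normal equation $\Delta f+(|A|^2-m)f=0$ collapses to $(|A|^2-m)f=0$. Hence $M$ is proper biharmonic if and only if $f\neq 0$ and $|A|^2=m$, and it suffices to determine all isoparametric hypersurfaces of $\mathbb{S}^{m+1}$ with $|A|^2=m$ and discard those with $f=0$.

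Next I would invoke M\"unzner's theorem: the number $g$ of distinct principal curvatures lies in $\{1,2,3,4,6\}$, these curvatures are $k_j=\cot\bigl(\theta+\tfrac{(j-1)\pi}{g}\bigr)$ for $j=1,\dots,g$ with $\theta\in(0,\pi/g)$, and $m_j=m_{j+2}$ (indices mod $g$), so at most two multiplicity values occur. Combining this with the classical identities
\[
\sum_{j=1}^{\ell}\cot\Bigl(\alpha+\tfrac{(j-1)\pi}{\ell}\Bigr)=\ell\cot(\ell\alpha),\qquad
\sum_{j=1}^{\ell}\csc^2\Bigl(\alpha+\tfrac{(j-1)\pi}{\ell}\Bigr)=\ell^2\csc^2(\ell\alpha)
\]
(the second being the $\alpha$-derivative of the first) lets one compute $|A|^2=\sum_j m_j k_j^2$ in closed form.

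Then comes the case analysis. If $g$ is odd, all multiplicities equal $p=m/g$ and one obtains $|A|^2=m\bigl(g\csc^2(g\theta)-1\bigr)$; imposing $|A|^2=m$ forces $\sin^2(g\theta)=g/2$, which is possible only for $g=1$, whence $\theta=\pi/4$, $M$ is totally umbilical with principal curvature $1$, hence an open part of the geodesic sphere $\mathbb{S}^m(1/\sqrt2)$, with $f=1\neq 0$. If $g=2s$ is even, grouping the odd- and even-indexed curvatures and applying the identities with $\ell=s$ gives $|A|^2=s^2\bigl(p\csc^2(s\theta)+q\sec^2(s\theta)\bigr)-m$, where $p,q$ are the two multiplicities and $s(p+q)=m$; then $|A|^2=m$ becomes $s\bigl(p\csc^2(s\theta)+q\sec^2(s\theta)\bigr)=2(p+q)$, and since $\csc^2$ and $\sec^2$ are each $\geq 1$ and cannot both equal $1$, the left side strictly exceeds $s(p+q)$, forcing $s<2$, i.e. $g=2$. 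For $g=2$ one has $|A|^2=m_1\csc^2\theta+m_2\sec^2\theta-m$, so $|A|^2=m$ is a quadratic in $t=\sin^2\theta$ with roots $t=\tfrac12$ and $t=m_1/(m_1+m_2)$: the second gives principal curvatures $\sqrt{m_2/m_1}$ and $-\sqrt{m_1/m_2}$, hence $f=0$, the minimal Clifford hypersurface, which is excluded; the first gives $\theta=\pi/4$, principal curvatures $1$ and $-1$, so by Cartan's classification of $g=2$ isoparametric hypersurfaces $M$ is an open part of $\mathbb{S}^{m_1}(1/\sqrt2)\times\mathbb{S}^{m_2}(1/\sqrt2)$ with $f=(m_1-m_2)/m$, which is nonzero precisely when $m_1\neq m_2$. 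This exhausts all cases and proves the statement.

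The argument is short once the machinery is in place; the real content, and the only genuine obstacle, is invoking M\"unzner's deep structure theorem — which simultaneously bounds $g$ and pins down the exact form of the principal curvatures and multiplicities — and then handling the even-$g$ multiplicity bookkeeping cleanly. The complementary checks, that the two model hypersurfaces indeed satisfy $|A|^2=m$ with $f\neq 0$ and are therefore proper biharmonic, are immediate.
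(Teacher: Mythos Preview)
Your proposal is correct and follows essentially the same approach the paper indicates: the paper's proof sketch says only that one uses ``the expression of principal curvatures for an isoparametric hypersurface which was then used to solve the equation $|A|^2=m$,'' and that is precisely what you do, invoking M\"unzner's structure theorem and carrying out the case analysis on $g\in\{1,2,3,4,6\}$ via the cotangent/cosecant identities. Your treatment is a careful and complete fleshing-out of the paper's one-line outline.
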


\noindent The proof relies on the expression of principal curvatures for an isoparametric hypersurface which was then used to solve the equation $|A|^2=m$.

In the case when the number of distinct principal curvatures is less than or equal to $3$ we have two results. The first one holds for $m=3$.

\begin{theorem}[\cite{BMO10}]\label{th:curb_med}
Let $\phi:M^3\to \mathbb{S}^{4}$ be a proper biharmonic hypersurface. Then $\phi$ is CMC.
\end{theorem}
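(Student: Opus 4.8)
The plan is to analyze the two biharmonic equations from the Corollary for hypersurfaces in $N^{m+1}(c)$ with $c=1$, $m=3$, namely $\Delta f+(|A|^2-3)f=0$ and $2A(\grad f)+3f\grad f=0$, on the open set $U=\{p\in M^3:\grad f(p)\neq 0\}$, and to show $U=\emptyset$. First I would assume, for contradiction, that $U$ is nonempty. On $U$, the second equation says $\grad f$ is an eigenvector of $A$ with eigenvalue $-3f/2$. Since a biharmonic hypersurface cannot be minimal on an open set (it would then be minimal, hence not proper), we may assume $f>0$ there, so $-3f/2<0$ is one principal curvature; denote the other two by $\lambda_2,\lambda_3$. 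The mean curvature relation $\trace A=3f$ gives $\lambda_2+\lambda_3=3f+3f/2=9f/2$. With at most three distinct principal curvatures, there are essentially two subcases to handle: either $\lambda_2=\lambda_3$, or $\lambda_2\neq\lambda_3$ and the principal curvature $-3f/2$ may or may not coincide with one of them — but since $-3f/2<0$ and $\lambda_2+\lambda_3>0$ forces at least one of $\lambda_2,\lambda_3$ to be positive, the analysis splits cleanly.

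Next I would exploit the Codazzi equation. Let $e_1=\grad f/|\grad f|$ be the unit principal direction with eigenvalue $\lambda_1=-3f/2$, and extend to a local orthonormal frame $\{e_1,e_2,e_3\}$ of principal directions. Writing out $(\nabla_{e_i}A)e_j=(\nabla_{e_j}A)e_i$ and using that $e_1(\lambda_k)$ is controlled while $e_2(f)=e_3(f)=0$, one extracts relations among the connection coefficients $\langle\nabla_{e_i}e_j,e_k\rangle$ and the derivatives of the $\lambda_k$. In particular, differentiating $\lambda_1=-3f/2$ and $\lambda_2+\lambda_3=9f/2$ along $e_1$ ties everything to $e_1(f)=|\grad f|$. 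I would then feed this structural information into the first biharmonic equation. Computing $\Delta f$ in terms of the Hessian of $f$: since $\grad f$ is an eigendirection, $\Hess f(e_1,e_1)$ involves $e_1(|\grad f|)$ and $\Hess f(e_i,e_i)$ for $i=2,3$ involves the geodesic curvature terms $-\langle\nabla_{e_i}e_i,e_1\rangle|\grad f|$, which by Codazzi are expressible through $\lambda_1-\lambda_i$ and $e_1(\lambda_i)$. Combining with $|A|^2=\lambda_1^2+\lambda_2^2+\lambda_3^2$ (and using $\lambda_2^2+\lambda_3^2=(\lambda_2+\lambda_3)^2-2\lambda_2\lambda_3$) turns $\Delta f+(|A|^2-3)f=0$ into an ODE/PDE system in $f$ and the symmetric functions of $\lambda_2,\lambda_3$ along integral curves of $e_1$.

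I expect the main obstacle to be closing the argument in the genuinely three–curvature subcase $\lambda_2\neq\lambda_3$: here one does not get a single clean ODE but a system, and one must show it has no solution with $f$ nonconstant. The strategy is to use the Codazzi-derived expressions for $e_1(\lambda_2)$, $e_1(\lambda_3)$ together with the compatibility condition coming from $[e_2,e_3]$ (the Gauss equation for the sectional curvature $K(e_2,e_3)$, which in $\mathbb{S}^4$ equals $1+\lambda_2\lambda_3$) to obtain enough extra equations that the system becomes overdetermined. One then derives that $|\grad f|$, viewed as a function along the $e_1$-flow, satisfies a first-order relation forcing $|\grad f|\to 0$, or that some principal curvature blows up, contradicting smoothness; alternatively one shows directly that $f$ must be constant on $U$, so $U=\emptyset$, i.e. $f$ is constant on all of $M^3$. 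In the subcase $\lambda_2=\lambda_3=9f/4$ the system collapses to a single ODE for $f$ along the $e_1$-geodesics of the form $f''=\text{(rational in }f,f')$, and a prime-integral argument (analogous to the ones appearing in Theorem~\ref{thm:CMOP33} and Theorem~\ref{thm:CMOP}) rules out nonconstant positive solutions, or forces a solution that cannot be completed to a smooth hypersurface; either way $\grad f\equiv 0$. Having shown $f$ is constant, $\phi$ is CMC, which is the claim.
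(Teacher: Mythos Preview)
The survey does not give its own proof of Theorem~\ref{th:curb_med}; it is quoted from \cite{BMO10} without argument, so there is no paper proof to compare against line by line. Your outline does follow the same architecture as the original source: restrict to $U=\{\grad f\neq 0\}$, read off $\lambda_1=-3f/2$ from the tangent part, diagonalize $A$ in a frame with $e_1=\grad f/|\grad f|$, and combine Codazzi with the normal part $\Delta f=(3-|A|^2)f$ to obtain an ODE system along the integral curves of $e_1$.

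Where your proposal has a genuine gap is the closing step in the three-curvature subcase. You say the system becomes ``overdetermined'' and then list several possible endings (``$|\grad f|\to 0$'', ``some principal curvature blows up'', ``or one shows directly that $f$ must be constant''), but none of these is actually carried out, and the first two are not the mechanism used in \cite{BMO10}; the result is purely local, so completeness/blow-up arguments are neither available nor needed. What the original proof does is concrete: from Codazzi one expresses all connection coefficients and the $e_1$-derivatives of $\lambda_2,\lambda_3,|\grad f|$ as rational functions of $f$, $e_1(f)$ and $\lambda_2\lambda_3$; substituting into the normal equation and differentiating repeatedly along $e_1$ produces successive algebraic relations among these quantities, and after elimination one arrives at a nontrivial polynomial identity $P(f)=0$ with \emph{constant} coefficients. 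Hence $f$ takes only finitely many values on $U$, so $f$ is locally constant there, contradicting $\grad f\neq 0$. This elimination-to-a-polynomial step is the heart of the argument and is absent from your sketch.

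Your handling of the two-curvature subcase $\lambda_2=\lambda_3=9f/4$ is also not right as written. The abstract ODE you obtain does admit nonconstant positive solutions (these are exactly the profile ODEs of the non-CMC \emph{biconservative} rotational hypersurfaces discussed elsewhere in the survey), so ``a prime-integral argument rules out nonconstant positive solutions'' is false. What must be shown is that such solutions fail the additional \emph{normal} biharmonic equation, and this again comes down to the same polynomial-constraint technique rather than to an ODE non-existence claim.
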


The second result is a generalization of Theorem \ref{th:curb_med} to higher dimensions.

\begin{theorem}[\cite{FuMN}]\label{thfu}
Let $\phi:M^m\rightarrow\mathbb{S}^{m+1}$ be a proper biharmonic hypersurfaces and assume that it has at most three distinct principal curvatures at any point. Then $M$ is CMC.
\end{theorem}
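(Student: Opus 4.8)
The plan is to argue by contradiction: suppose $f$ is non-constant, so that $U=\{p\in M:(\grad f)(p)\neq 0\}$ is a nonempty open set. First, by the unique continuation theorem for biharmonic maps (if $\phi$ were harmonic, i.e. minimal, on a nonempty open set, it would be minimal everywhere, contradicting properness), the set $\{f\neq 0\}$ is open and dense; hence $W:=U\cap\{f\neq 0\}$ is a nonempty open set, and I would work exclusively on $W$. The key structural input is the tangent part of the biharmonic system for hypersurfaces in $N^{m+1}(c)$ with $c=1$: the equation $2A(\grad f)+mf\grad f=0$ says that on $W$ the unit field $E_1:=\grad f/|\grad f|$ is a principal direction with principal curvature $\lambda_1=-\tfrac{m}{2}f\neq 0$. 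Thus $\lambda_1$ is one of the at most three distinct principal curvatures, and it is rigid in the sense that $E_1(\lambda_1)=-\tfrac{m}{2}|\grad f|\neq 0$ while $X(\lambda_1)=0$ for every $X\perp E_1$.

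Next I would stratify $W$ by the number $\ell\in\{1,2,3\}$ of distinct principal curvatures, passing to a connected open subset on which $\ell$, the multiplicities, and the eigendistributions all have constant rank; such subsets are dense in $W$, so it suffices to reach a contradiction on each of them. If $\ell=1$ the hypersurface is umbilical and $\lambda_1=f$, forcing $-\tfrac{m}{2}f=f$, hence $f=0$, a contradiction. For $\ell=2$ and $\ell=3$ the scheme is uniform: write the principal curvatures as $\lambda_1=-\tfrac{m}{2}f$, $\lambda_2$ (and $\lambda_3$) with constant multiplicities $m_1\ge 1$, $m_2$ (and $m_3$) summing to $m$; use $\trace A=mf$ to express $\lambda_2$ (and $\lambda_3$), and then $|A|^2=m^2f^2\,c$, as explicit functions of $f$, with $c=c(m,m_1,m_2,m_3)>0$ a constant; and feed this into the normal part $\Delta f+(|A|^2-m)f=0$, which becomes $\Delta f+m(cf^2-1)f=0$. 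In parallel, the Codazzi equation $(\nabla_X A)Y=(\nabla_Y A)X$, evaluated on the orthonormal frame adapted to the eigenspaces, should force most connection coefficients $\langle\nabla_{E_i}E_j,E_k\rangle$ to vanish (the only non-constant curvature, $\lambda_1$, varies only along $E_1$), so that $\Delta f$ reduces to an expression in $E_1(E_1 f)$, $E_1 f$ and $f$: one obtains a second-order ODE for $f$ along the flow lines of $E_1$, together with a first-order prime integral $(E_1 f)^2=P(f)$ coming from integrating the structure equations, exactly as the prime integral \eqref{eq:prime_integ} in the surface case. Differentiating the prime integral and comparing with the ODE yields a polynomial identity in $f$ that cannot hold on an interval unless $\grad f\equiv 0$ on the stratum --- the desired contradiction. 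Since the union of the strata is dense in $W$ and $f$ is continuous, $\grad f$ then vanishes on all of $W$, contradicting the definition of $W$.

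The main obstacle is the case $\ell=3$. There the $\lambda_2$- and $\lambda_3$-eigendistributions need not be one-dimensional, and the Codazzi equations couple $E_1(\lambda_2)$ and $E_1(\lambda_3)$ with derivatives of $\lambda_2,\lambda_3$ taken inside the higher-dimensional eigenspaces; the real work is to show these latter derivatives vanish --- equivalently, that the $\lambda_2$- and $\lambda_3$-eigendistributions are autoparallel and their integral leaves are umbilical submanifolds --- so that $\lambda_2,\lambda_3$ remain functions of $f$ alone and the ODE-plus-prime-integral reduction applies. The $\ell=2$ sub-case is essentially the local counterpart of the argument behind Theorem \ref{BMO2curv}; the only extra care needed is that $\ell$ and the multiplicities may jump across the boundary of a stratum, which is precisely why the contradiction is produced stratum by stratum and then globalized by continuity of $f$.
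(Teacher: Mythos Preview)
The paper is a survey and does not itself prove this theorem; it is quoted from \cite{FuMN}. Your overall architecture---assume $\grad f\not\equiv 0$, restrict to a connected open set where $\grad f\neq 0$ and $f\neq 0$, read off from the tangent part that $E_1=\grad f/|\grad f|$ is a principal direction with $\lambda_1=-\tfrac{m}{2}f$, stratify by the number $\ell$ of distinct principal curvatures, and combine Codazzi with the normal part $\Delta f=(m-|A|^2)f$---is exactly the route taken in \cite{FuMN}. The $\ell=1$ and $\ell=2$ portions of your sketch are correct and match the known arguments (the $\ell=2$ case is precisely the local computation behind Theorem~\ref{BMO2curv}).

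There is, however, a genuine gap in your $\ell=3$ outline. The assertion that $\trace A=mf$ lets you ``express $\lambda_2$ (and $\lambda_3$) \dots as explicit functions of $f$'' and hence that $|A|^2=c\,m^2f^2$ for a constant $c=c(m,m_1,m_2,m_3)$ is not correct when $\ell=3$: the trace gives only the single linear relation $m_2\lambda_2+m_3\lambda_3=\tfrac{m(m_1+2)}{2}f$, which does not pin down $\lambda_2,\lambda_3$ separately, and $|A|^2$ is certainly not a fixed multiple of $f^2$. Even after you use Codazzi to show (as you correctly anticipate) that $\lambda_2,\lambda_3$ and the relevant connection coefficients depend only on the integral curves of $E_1$, what you obtain is a coupled nonlinear ODE system in the arc-length parameter for $(\lambda_1,\lambda_2,\lambda_3)$, not a closed first-order ``prime integral'' of the surface type. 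In \cite{FuMN} the contradiction is produced by differentiating this system several times along $E_1$, successively eliminating $\lambda_2,\lambda_3$ and their derivatives, and arriving at a nontrivial polynomial identity $P(f)=0$ with constant coefficients; since $f$ is non-constant on the stratum, this is impossible. Your sketch identifies the right obstacle but underestimates it: the work is not only to show the $\lambda_i$ are functions of $f$ alone, but then to carry out this (lengthy) elimination, and the shortcut $|A|^2=c\,f^2$ is unavailable.
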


These two results lead to positive answers to Conjecture \ref{c1}, with supplementary hypotheses. First, assuming that the hypersurface is also complete, Theorem \ref{th:curb_med} implies Conjecture \ref{c1}.

\begin{theorem}[\cite{BMO10}]\label{t4.6}
Let $\phi:M^3\rightarrow\mathbb{S}^{4}$ be a complete proper biharmonic hypersurface. Then $\phi(M)=\mathbb{S}^{3}(1/\sqrt 2)$ or $\phi(M)=\mathbb{S}^{2}(1/\sqrt 2)\times\mathbb{S}^{1}(1/\sqrt 2)$.
\end{theorem}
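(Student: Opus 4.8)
The plan is to bootstrap from the constant‑mean‑curvature result Theorem~\ref{th:curb_med} and the two–principal‑curvature classification Theorem~\ref{BMO2curv}. First, by Theorem~\ref{th:curb_med} the immersion $\phi$ is CMC. Substituting $\grad f=0$ into the tangential and normal parts of the biharmonic equation for hypersurfaces in $\mathbb{S}^{4}$ (that is, $c=1$, $m=3$) leaves only the relation $(|A|^{2}-3)f=0$; since $\phi$ is \emph{proper} biharmonic we have $f\neq0$, hence $|A|^{2}=3$ and $f$ is a non‑zero constant. In particular, by the Gauss equation $M$ has constant scalar curvature, and, $|A|$ being constant, $M$ has bounded sectional curvature; together with completeness this makes the Omori–Yau maximum principle available on $M$.

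Next I would reduce to the number $g$ of distinct principal curvatures. If $g\le2$ everywhere, Theorem~\ref{BMO2curv} applies directly and, since $M$ is complete, gives $\phi(M)=\mathbb{S}^{3}(1/\sqrt2)$ or $\phi(M)=\mathbb{S}^{2}(1/\sqrt2)\times\mathbb{S}^{1}(1/\sqrt2)$ (the multiplicities must be $\{1,2\}$, so the inequality $m_1\neq m_2$ is automatic). Thus the whole problem collapses to showing that $g=3$ cannot occur on any non‑empty open set $U\subset M$.

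To handle $U$: on $U$ the eigenvalue functions $k_{1}<k_{2}<k_{3}$ of $A$ are smooth, and differentiating the identities $k_{1}+k_{2}+k_{3}=3f$ and $k_{1}^{2}+k_{2}^{2}+k_{3}^{2}=3$ shows that the three gradients $\grad k_{i}$ are all proportional to a single vector field $v$ on $U$. Inserting this into the Codazzi equations written in a local principal orthonormal frame, and combining with the Gauss equation, produces a closed ODE system for the $k_{i}$ along the integral curves of $v$ together with rigid constraints on the connection coefficients. I would then argue that completeness (equivalently, the Omori–Yau principle) forces $v\equiv0$ on $U$; so on $U$ the hypersurface is isoparametric with three distinct principal curvatures. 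This contradicts Theorem~\ref{teo:isoparametric}, which lists all isoparametric proper biharmonic hypersurfaces of $\mathbb{S}^{4}$ and none of them has three distinct principal curvatures. Hence $U=\emptyset$, and we are in the previous case.

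The main obstacle is exactly this last step: excluding three distinct principal curvatures. Locally such hypersurfaces are not obstructed, so completeness has to enter in an essential way, and the delicate point is to extract from the constancy of $\trace A$ and $|A|^{2}$ enough control on the Gauss–Codazzi structure equations to integrate them along the $\grad k_{i}$–directions. An alternative route replaces this by a Simons‑type identity for the traceless operator $\Phi=A-fI$, noting that $|\Phi|^{2}=3-3f^{2}$ is constant so $\Delta|\Phi|^{2}=0$; there the difficulty is the cubic term $\trace\Phi^{3}$, which one must control by an Okumura‑type inequality whose equality case again forces at most two distinct principal curvatures, reducing us once more to Theorem~\ref{BMO2curv}.
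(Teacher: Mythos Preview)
Your first two steps are exactly the paper's: invoke Theorem~\ref{th:curb_med} to get CMC, then use the normal biharmonic equation to deduce $|A|^2=3$ and constant scalar curvature. From that point on, however, the paper does not attempt to exclude $g=3$ by a direct Gauss--Codazzi or Simons-type analysis. Instead it quotes a result of Q.~M.~Cheng and Q.~R.~Wan (\cite{CW93}): any \emph{complete} CMC hypersurface in a $4$-dimensional space form with constant scalar curvature is isoparametric. This is applied as a black box, and then Theorem~\ref{teo:isoparametric} finishes the argument. (The paper also notes that the original proof in \cite{BMO10} assumed compactness and used S.~Chang's theorem \cite{C93} in place of Cheng--Wan.)

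What you are proposing in your third step is, in effect, to reprove the Cheng--Wan theorem in this special case, and you correctly identify this as the real obstacle. Neither of your two routes is carried through: in the Codazzi route you assert that completeness/Omori--Yau ``forces $v\equiv0$'' without a mechanism, and in the Simons route the control of $\trace\Phi^3$ via an Okumura-type inequality is not sharp here (Okumura's inequality gives $|\trace\Phi^3|\le\tfrac{m-2}{\sqrt{m(m-1)}}|\Phi|^3$, with equality exactly when at most two eigenvalues occur, but plugging this into $\Delta|\Phi|^2=0$ does not by itself force the equality case). So as written your proof has a genuine gap at the decisive step; the paper's route avoids it entirely by citing \cite{CW93}.
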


\noindent Indeed, it is easy to verify that, in general, a CMC proper biharmonic hypersurface satisfies $|A|^2=m$ and has positive constant scalar curvature. Therefore, by applying Theorem \ref{th:curb_med}, one obtains that $M^3$ is CMC and has constant scalar curvature. Furthermore, since $M$ is complete, using a result of Q.~M.~Cheng and Q.~R.~Wan in \cite{CW93}, one sees that $M$ is isoparametric and then we conclude with Theorem \ref{teo:isoparametric}.

\begin{remark}
Theorem \ref{t4.6} was actually obtained in \cite{BMO10} under the stronger assumption of compactness, because we used a result of S. Chang in \cite{C93} that shows that any compact CMC hypersurface with constant scalar curvature in $\mathbb{S}^4$ is isoparametric.
\end{remark}

Then, using another of S. Chang's results (see \cite{C94}), Theorem \ref{thfu} implies the next theorem.

\begin{theorem}[\cite{FuMN}]\label{thfu2}
Let $\phi:M^m\rightarrow\mathbb{S}^{m+1}$ be a compact biharmonic hypersurface and assume that it has three distinct curvatures everywhere. Then $M$ is minimal.
\end{theorem}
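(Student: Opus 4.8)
The plan is to combine Theorem \ref{thfu} with a rigidity result for compact CMC hypersurfaces in spheres, together with the standard gap estimate for CMC proper biharmonic hypersurfaces. First I would observe that, by Theorem \ref{thfu}, the hypothesis of having exactly three distinct principal curvatures everywhere forces $M$ to be CMC, i.e.\ $|H|=\cst$. Now there are two cases according to whether $M$ is proper biharmonic or minimal, and the whole point is to rule out the proper biharmonic case. So suppose, for contradiction, that $M$ is proper biharmonic (hence CMC proper biharmonic). Then $|A|^2=m$ is constant, and consequently the scalar curvature of $M$ is a positive constant (this is the elementary computation already noted after Theorem \ref{t4.6}: for a CMC hypersurface, the Gauss equation expresses the scalar curvature in terms of $f$ and $|A|^2$).

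Next I would invoke S.\ Chang's classification of compact CMC hypersurfaces in $\mathbb{S}^{m+1}$ with constant scalar curvature and three distinct principal curvatures (the reference \cite{C94} cited just before the theorem): such a hypersurface must be isoparametric. Having reduced to the isoparametric case, I would apply Theorem \ref{teo:isoparametric}: an isoparametric proper biharmonic hypersurface in $\mathbb{S}^{m+1}$ is an open part of $\mathbb{S}^m(1/\sqrt2)$ or of $\mathbb{S}^{m_1}(1/\sqrt2)\times\mathbb{S}^{m_2}(1/\sqrt2)$ with $m_1\neq m_2$. But the first of these is totally umbilical (one principal curvature) and the second has exactly two distinct principal curvatures, so in either case $M$ has at most two distinct principal curvatures, contradicting the hypothesis of three distinct ones. (Alternatively, the contradiction already arises from Theorem \ref{BMO2curv}.) Therefore $M$ cannot be proper biharmonic, so it is minimal, as claimed.

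The main obstacle in this argument is the passage from ``compact CMC with constant scalar curvature and three distinct principal curvatures'' to ``isoparametric''; this is exactly the content of Chang's theorem in \cite{C94} and is a genuinely substantial input — it is the analogue, for the three-curvature case, of the Cheng–Wan / Chang results used in the proof of Theorem \ref{t4.6}. Everything else is bookkeeping: the reduction to CMC via Theorem \ref{thfu}, the derivation of $|A|^2=m$ and constant scalar curvature from the CMC proper biharmonic condition, and the final incompatibility with having three distinct principal curvatures via Theorem \ref{teo:isoparametric} (or Theorem \ref{BMO2curv}). One should also check that the degenerate possibility of fewer than three distinct principal curvatures on a dense subset does not sneak in — but the hypothesis is that there are three distinct curvatures \emph{everywhere}, so this is not an issue.
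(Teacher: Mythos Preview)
Your proposal is correct and follows essentially the same route as the paper: Theorem \ref{thfu} gives CMC, hence $|A|^2=m$ and constant scalar curvature in the proper biharmonic case, and then S.~Chang's result \cite{C94} forces the hypersurface to be isoparametric, which via Theorem \ref{teo:isoparametric} contradicts the assumption of three distinct principal curvatures. The paper only states this as ``using another of S.~Chang's results, Theorem \ref{thfu} implies the next theorem,'' and your write-up simply unpacks that sentence.
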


\begin{remark}
Here we had to slightly modify the original statement of Theorem \ref{thfu2} due to S. Chang's result, that is stated for hypersurfaces with three principal curvatures everywhere (and not for hypersurfaces with at most three principal curvatures everywhere).
\end{remark}

Still following the study of proper biharmonic hypersurfaces according to the number of distinct principal curvatures, Y. Fu and M. C. Hong improved Theorem \ref{thfu} for a greater number of distinct principal curvatures (although considering an additional hypothesis on the scalar curvature). More precisely, using Gauss and Codazzi equations and performing some long computations, they proved the following theorem.

\begin{theorem}[\cite{FH}]
Let $\phi:M^m\rightarrow\mathbb{S}^{m+1}$ be a proper biharmonic hypersurface with at most six distinct principal curvatures at any point. If $M$ has constant scalar curvature, then $M$ is CMC.
\end{theorem}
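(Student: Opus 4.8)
The plan is to argue by contradiction. Suppose $M$ is not CMC; then $U:=\{p\in M:(\grad f)(p)\neq 0\}$ is a nonempty open set, and the whole argument is carried out on $U$ (real–analyticity of biharmonic hypersurfaces lets one further assume, on a dense open subset of $U$, that the number of distinct principal curvatures is locally constant; the case of at most three distinct principal curvatures is already settled by Theorem \ref{thfu}, so one may moreover assume that at least four distinct principal curvatures occur). Since $c=1$, the biharmonic system for hypersurfaces in $\mathbb{S}^{m+1}$ reads
\begin{equation}
\Delta f+(|A|^2-m)f=0,\qquad A(\grad f)=-\frac{m}{2}\,f\,\grad f.\tag{$\star$}
\end{equation}
The second relation in $(\star)$ shows that $e_1:=\grad f/|\grad f|$ is a principal direction on $U$ with principal curvature $\lambda_1=-\tfrac{m}{2}f$; complete $e_1$ to a local orthonormal frame $\{e_1,\dots,e_m\}$ of principal vectors, $Ae_i=\lambda_i e_i$. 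Because $M$ has constant scalar curvature, the Gauss equation gives $m^2f^2-|A|^2=b$ for a constant $b$, so $|A|^2=m^2f^2-b$ and the first relation in $(\star)$ becomes the semilinear elliptic equation $\Delta f=-(m^2f^2-b-m)f$. Together with $\trace A=mf$ this records the algebraic constraints $\sum_{i\ge 2}\lambda_i=\tfrac{3m}{2}f$ and $\sum_{i\ge 2}\lambda_i^2=\tfrac{3m^2}{4}f^2-b$ on the remaining principal curvatures.

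Next I would exploit the Codazzi equation $(\nabla_XA)Y=(\nabla_YA)X$. Writing $\Gamma_{ij}^k:=\langle\nabla_{e_i}e_j,e_k\rangle$, Codazzi yields $e_j(\lambda_i)=(\lambda_j-\lambda_i)\Gamma_{ij}^i$ for $i\neq j$, together with the off–diagonal relations $(\lambda_j-\lambda_k)\Gamma_{ij}^k=(\lambda_i-\lambda_k)\Gamma_{ji}^k$. Since $e_jf=0$ for $j\ge 2$, we get $e_j(\lambda_1)=0$, hence $\Gamma_{1j}^1=0$ whenever $\lambda_j\neq\lambda_1$; in particular the integral curves of $e_1$ are geodesics of $M$ when $\lambda_1$ is a simple eigenvalue separated from the rest. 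Parametrizing such a curve by arc-length $s$ and setting $\mu:=e_1(f)=|\grad f|>0$, the normal part of $(\star)$ turns, via $\Delta f=-e_1(\mu)-\mu\,\di(e_1)$, into the ODE $e_1(\mu)+\mu\,\di(e_1)=(m^2f^2-b-m)f$, where $\di(e_1)=\sum_{i\ge 2}\Gamma_{i1}^i=\sum_{i\ge 2} e_1(\lambda_i)/(\lambda_1-\lambda_i)$ (summed over distinct eigenvalues $\neq\lambda_1$, with multiplicity). Differentiating the two algebraic constraints along $e_1$ gives $\sum_{i\ge 2}e_1(\lambda_i)=\tfrac{3m}{2}\mu$ and $\sum_{i\ge 2}\lambda_i\,e_1(\lambda_i)=\tfrac{3m^2}{4}f\mu$, while $e_1(\lambda_1)=-\tfrac{m}{2}\mu$.

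The core of the proof is then a finite case analysis organised by the number $p$ of distinct values among $\lambda_2,\dots,\lambda_m$ — so $p\le 6$, and $p\le 5$ if $\lambda_1$ coincides with none of them, by the hypothesis of at most six distinct principal curvatures. One groups the indices into the eigendistributions $D_1,\dots,D_p$; on a distribution of multiplicity $\ge 2$ the corresponding eigenvalue is constant along that distribution (a direct consequence of $e_j(\lambda_i)=(\lambda_j-\lambda_i)\Gamma_{ij}^i$), and Codazzi then expresses all the remaining $\Gamma_{ij}^k$ in terms of the $e_1$-derivatives of the eigenvalues and of $f$. Substituting this into the Gauss equation and into the formula for $\di(e_1)$, one obtains, for each admissible configuration of multiplicities, an overdetermined first-order system in $s$ for $f$ and the finitely many eigenvalue functions; its compatibility forces a nontrivial polynomial identity in $f$ with constant coefficients, which cannot hold along a curve where $f$ is non-constant unless $\mu\equiv 0$, contradicting $U\neq\emptyset$.

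I expect the genuine difficulty to lie precisely in this last step: keeping the bookkeeping of the connection coefficients under control across as many as six eigendistributions, and verifying that in every branch the resulting system really is incompatible. It is here that the bound ``at most six'' enters essentially — beyond it the number of independent eigenvalue functions outstrips the available constraints ($\trace A$, $|A|^2$, constant scalar curvature, and the biharmonic ODE), and the method collapses — and this is where essentially all the length of the argument resides; the preliminary reduction via $(\star)$, Gauss and Codazzi is, by comparison, routine.
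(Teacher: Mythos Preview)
Your outline is correct and matches the method the survey attributes to Fu and Hong --- the paper does not give its own proof here, only the one-line summary ``using Gauss and Codazzi equations and performing some long computations,'' and that is exactly the programme you describe: work on $\{\grad f\neq 0\}$, use $(\star)$ together with the constant-scalar-curvature relation $m^2f^2-|A|^2=\cst$ to reduce to finitely many eigenvalue functions along the $e_1$-geodesics, and then grind through the Codazzi/Gauss compatibility conditions case by case until a nontrivial polynomial in $f$ with constant coefficients forces $\grad f=0$. One small correction: you should not appeal to real-analyticity of biharmonic hypersurfaces (this is not known in general); the passage to an open dense subset where the number of distinct principal curvatures is locally constant is a standard fact for any smooth symmetric endomorphism field and requires no analyticity.
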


\begin{remark} The above result holds in any space form $N^{m+1}(c)$, as it is actually given in its original form in \cite{FH}.
\end{remark}

As we have seen the CMC hypothesis implies, for a proper biharmonic hypersurface $M^m$, that $|A|^2=m$ and its scalar curvature is a positive constant. When $M$ is compact, either hypothesis $|A|^2=\cst$, or the scalar curvature is a constant, implies that $M$ is CMC.

The fact that $|A|^2=\cst$ implies that $M$ is CMC, and therefore $|A|^2=m$, is an immediate consequence of two results showing that if the function $|A|^2-m$ has constant sign, then $M$ is CMC. More precisely, we have the following two results.

\begin{theorem}[\cite{BMO12},\cite{O}]\label{prop:|B|>m}
Let $\phi:M^m\rightarrow\mathbb{S}^{m+1}$ be a compact proper biharmonic hypersurface. If $|A|^2\geq m$, then $\phi$ is CMC and $|A|^2=m$.
\end{theorem}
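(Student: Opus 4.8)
The plan is to use only the normal part of the biharmonic equation for a hypersurface in $\mathbb{S}^{m+1}$, together with an integration over the compact manifold $M$. Recall that, by the characterization of biharmonic hypersurfaces in $N^{m+1}(c)$ specialized to $c=1$, biharmonicity gives in particular
\[
\Delta f+\left(|A|^2-m\right)f=0,
\]
where $f$ is the mean curvature function (globally defined and smooth, since $M$ is oriented and connected). The tangential equation will not be needed at all.

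First I would multiply this equation by $f$ and integrate over $M$. Using the sign convention $\Delta=-\di\grad$ fixed in the Conventions and the divergence theorem on the closed manifold $M$, the term $\int_M f\,\Delta f\,dv$ equals $\int_M|\grad f|^2\,dv$, so one obtains
\[
\int_M|\grad f|^2\,dv+\int_M\left(|A|^2-m\right)f^2\,dv=0.
\]
Now the hypothesis $|A|^2\geq m$ makes both integrands pointwise nonnegative, hence each integral must vanish separately. From $\int_M|\grad f|^2\,dv=0$ and the connectedness of $M$ it follows that $f$ is constant, i.e.\ $\phi$ is CMC; from $\int_M\left(|A|^2-m\right)f^2\,dv=0$ together with $|A|^2-m\geq 0$ it follows that $\left(|A|^2-m\right)f^2\equiv 0$ on $M$.

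Finally, since $\phi$ is proper biharmonic it is non-minimal, so $f$ is not identically zero; being constant, $f$ is then nowhere zero, and therefore $\left(|A|^2-m\right)f^2\equiv 0$ forces $|A|^2=m$ everywhere. The only delicate point is the sign bookkeeping: one must match the sign convention for the rough Laplacian used in the characterization formula so that the integration by parts produces $+\int_M|\grad f|^2\,dv$ and not its negative. Beyond that there is no analytic obstacle, precisely because compactness lets us discard the tangential equation and reduce everything to a one-line application of Green's formula.
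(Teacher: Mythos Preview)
Your proof is correct and matches the paper's approach: the paper states that the result ``follows easily by integrating $\Delta f^2$, which was computed by using the normal part of the biharmonic equation.'' Since $\Delta f^2 = 2f\Delta f - 2|\grad f|^2$ under the stated sign convention, integrating $\Delta f^2$ over the closed manifold and substituting $\Delta f = (m-|A|^2)f$ yields exactly your identity $\int_M|\grad f|^2\,dv + \int_M(|A|^2 - m)f^2\,dv = 0$; your version simply skips the $\Delta f^2$ packaging by multiplying directly by $f$ and integrating by parts.
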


\begin{theorem}[\cite{JHC}]\label{th: jchen1}
Let $\phi:M^m\rightarrow\mathbb{S}^{m+1}$ be a compact proper biharmonic hypersurface. If $|A|^2\leq m$, then $\phi$ is CMC and $|A|^2=m$.
\end{theorem}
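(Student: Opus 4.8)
The plan is to exploit the two equations characterising biharmonicity for a hypersurface in $\mathbb{S}^{m+1}$ (the Corollary above with $c=1$): writing $f$ for the mean curvature function and $A$ for the shape operator with respect to a local unit normal,
\[
\Delta f+(|A|^2-m)f=0,\qquad 2A(\grad f)+mf\,\grad f=0 .
\]
Being proper biharmonic means $f\not\equiv 0$, and by the unique continuation theorem the open set $\{f\neq 0\}$ is dense. I will show $|A|^2\equiv m$; then $\Delta f=0$, so $f$ is a nonzero constant by compactness, i.e.\ $\phi$ is CMC, and $|A|^2=m$.

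First I would record the pointwise content of the second equation. Where $\grad f\neq 0$ it is a principal direction with principal curvature $-\tfrac m2 f$; since $\trace A=mf$ the remaining $m-1$ principal curvatures sum to $\tfrac{3m}{2}f$, so Cauchy--Schwarz gives
\[
|A|^2\ \geq\ \tfrac{m^2}{4}f^2+\tfrac{9m^2}{4(m-1)}f^2\ =\ \tfrac{m^2(m+8)}{4(m-1)}\,f^2 \qquad\text{on }\{\grad f\neq 0\},
\]
and hence on its closure. Consequently the function
\[
\Phi:=(m-|A|^2)\,f^2\Big(\tfrac{m^2}{4}f^2-\tfrac{m-1}{m}|A|^2\Big)
\]
satisfies $\Phi\leq 0$ on all of $M$: on $\overline{\{\grad f\neq 0\}}$ the first factor is $\geq 0$ by the hypothesis $|A|^2\leq m$ and the bracket is $\leq\tfrac{m^2}{4}f^2-\tfrac{m-1}{m}\cdot\tfrac{m^2(m+8)}{4(m-1)}f^2=-2mf^2\leq 0$; on $\Int\{\grad f=0\}$ the function $f$ is locally constant, the value $0$ being excluded by unique continuation, so the first equation forces $|A|^2=m$ there and $\Phi=0$; and these two sets cover $M$.

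Next I would combine three integral identities over the compact $M$: multiplying the first equation by $f$ gives $\int_M|\grad f|^2\,dv=\int_M(m-|A|^2)f^2\,dv$; integrating $\Delta(f^4)$ gives $\int_M f^2|\grad f|^2\,dv=\tfrac13\int_M(m-|A|^2)f^4\,dv$; and the integral Bochner (Reilly) identity $\int_M(\Delta f)^2\,dv=\int_M|\Hess f|^2\,dv+\int_M\ricci^M(\grad f,\grad f)\,dv$, combined with the Gauss equation in $\mathbb{S}^{m+1}$ and the second biharmonic equation, evaluates $\ricci^M(\grad f,\grad f)=\big((m-1)-\tfrac{3m^2}{4}f^2\big)|\grad f|^2$. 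Writing $|\Hess f|^2=\tfrac1m(\Delta f)^2+|\mathring{\Hess}f|^2$ for the trace-free part and $(\Delta f)^2=(m-|A|^2)^2f^2$, a direct computation collapses everything to
\[
\int_M|\mathring{\Hess}f|^2\,dv\ =\ \int_M\Phi\,dv .
\]
The left-hand side is $\geq 0$ and, by the previous step, the right-hand side is $\leq 0$; hence both vanish, so $\Phi\equiv 0$. Since the bracket in $\Phi$ is strictly negative wherever $f\neq 0$, this gives $|A|^2=m$ on the dense set $\{f\neq 0\}$, hence $|A|^2\equiv m$, and the conclusion follows as above.

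The step I expect to carry the weight is the last one: recognising that the right quantity to control is $\int_M|\mathring{\Hess}f|^2$, and that the Bochner identity together with the two elementary identities (from multiplication by $f$ and from $\Delta(f^4)$) turns it into $\int_M\Phi$, whose integrand is trapped between $0$ and a manifestly nonpositive expression by the Cauchy--Schwarz estimate on the principal curvatures forced by the tangential part of the biharmonic equation. Checking that this bracket has the right sign is exactly where the hypothesis $|A|^2\leq m$ and the second equation cooperate; the remaining manipulations are bookkeeping with the structure equations.
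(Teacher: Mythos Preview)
Your argument is correct and uses exactly the ingredients the paper attributes to J.~H.~Chen's proof (and its variant in \cite{BMO12}, \cite{O} via unique continuation): the Bochner formula for $|\grad f|^2$, the identities coming from $\Delta f^2$ and $\Delta f^4$, and the Cauchy--Schwarz lower bound $|A|^2\geq\tfrac{m^2(m+8)}{4(m-1)}f^2$ forced by the tangential equation. One minor imprecision in your final step: the claim that the bracket $\tfrac{m^2}{4}f^2-\tfrac{m-1}{m}|A|^2$ is strictly negative \emph{wherever} $f\neq 0$ was only established on $\overline{\{\grad f\neq 0\}}$, and on $\Int\{\grad f=0\}\cap\{f\neq 0\}$ (where $|A|^2=m$) it can fail --- but this is harmless, since you already obtained $|A|^2=m$ there directly, so $|A|^2\equiv m$ still follows by density.
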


The first of the two results follows easily by integrating $\Delta f^2$, which was computed by using the normal part of the biharmonic equation. The second result, on the other side, given by J.~H.~Chen, is more difficult to prove and requires tensor analysis (among others, a Bochner type formula for $\Delta|\grad f|^2$, the expressions of $\Delta f^2$ and $\Delta f^4$ for biharmonic hypersurfaces, and a certain inequality involving $|A|^2$ and $f^2$). Moreover, in \cite{BMO12} and \cite{O}, it was given a proof which is slightly different from the original one, based on the unique continuation property of biharmonic maps.

We mention here that if $|A|^2=\cst$ and $M^m$ is complete and non-compact, then $M^m$ is CMC, under some additional hypotheses involving the Ricci curvature (see \cite{BMO12}, \cite{O}).

The fact that constant scalar curvature implies that $M$ is CMC has recently been proved by S.~Maeta and Y.~L.~Ou (see \cite{MO}).

\begin{theorem}[\cite{MO}]
A compact hypersurface $\phi:M^m\rightarrow\mathbb{S}^{m+1}$ with constant scalar curvature is biharmonic if and only if it is minimal, or it has non-zero constant mean curvature and $|A|^2=m$.
\end{theorem}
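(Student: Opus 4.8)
The plan is to treat the ``if'' direction by a direct check and the ``only if'' direction by a maximum-principle argument that, via the Gauss equation, turns the constancy of the scalar curvature into the pointwise bound $|A|^2\le m$, after which Theorem~\ref{th: jchen1} finishes the job.

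For the ``if'' direction: if $M$ is minimal, then $\tau(\phi)=mH=0$, so $\phi$ is harmonic and hence biharmonic; if $f$ is a non-zero constant with $|A|^2=m$, then, taking $c=1$ in the biharmonic system for a hypersurface in $N^{m+1}(c)$, the tangent equation $2A(\grad f)+mf\grad f=0$ holds because $\grad f=0$ while the normal equation $\Delta f+(|A|^2-m)f=0$ reduces to $0=0$; so $\phi$ is biharmonic.

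For the ``only if'' direction I would assume $\phi$ biharmonic and $M$ not minimal, and show $\phi$ is CMC with $|A|^2=m$. Taking $c=1$, the normal part of the biharmonic equation is $\Delta f=(m-|A|^2)f$. By the Gauss equation the scalar curvature of $M$ equals $m(m-1)+m^2f^2-|A|^2$; as it is constant, there is a constant $b$ with $|A|^2=m^2f^2+b$, and substituting gives $\Delta f=(m-b)f-m^2f^3$. Then, from $\Delta(f^2)=2f\Delta f-2|\grad f|^2$,
$$
\Delta(f^2)=2(m-b)f^2-2m^2f^4-2|\grad f|^2 .
$$
I would evaluate this at a point $p$ where $f^2$ is maximal; since $M$ is not minimal this maximum is strictly positive, so $f(p)\neq 0$, and $\grad(f^2)(p)=2f(p)\grad f(p)=0$ forces $\grad f(p)=0$. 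With the positive Laplacian convention adopted here, $\Delta(f^2)(p)\geq 0$, and the displayed identity then gives $m^2f(p)^2\leq m-b$. Hence $f^2\leq f(p)^2\leq (m-b)/m^2$ on all of $M$, so $|A|^2=m^2f^2+b\leq m$ everywhere. Since $M$ is biharmonic and non-minimal it is proper biharmonic, so Theorem~\ref{th: jchen1} applies and yields that $\phi$ is CMC with $|A|^2=m$; being CMC and non-minimal, $f$ is a non-zero constant, which is the assertion.

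The step needing care is the maximum-principle argument: one must work with $f^2$, not with $f$, since $\grad f$ need not vanish at a zero of $f$, whereas $\grad(f^2)$ does vanish at a maximum of $f^2$, and that maximum is strictly positive precisely because $M$ is non-minimal --- this is what makes the bound $|A|^2\leq m$ available even when $f$ has zeros. (Dually, a minimum of $f^2$ taken on a connected component of $\{f\neq 0\}$ would yield $|A|^2\geq m$ there and allow an appeal to Theorem~\ref{prop:|B|>m} instead; the route through Theorem~\ref{th: jchen1} is just the most economical.)
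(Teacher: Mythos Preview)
Your argument is correct. The maximum-principle computation is clean: from $\Delta f=(m-|A|^2)f$ and the Gauss identity $|A|^2=m^2f^2+b$ you get $\Delta(f^2)=2(m-b)f^2-2m^2f^4-2|\grad f|^2$, and evaluating at a point where $f^2$ is maximal (and strictly positive, since $M$ is non-minimal) yields $|A|^2\le m$ everywhere, at which point Theorem~\ref{th: jchen1} applies.

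This is a genuinely different route from the one the paper attributes to Maeta--Ou. Their argument is self-contained: it uses the Bochner formula
\[
\tfrac{1}{2}\Delta|\grad f|^2=\langle\grad\Delta f,\grad f\rangle-\ricci(\grad f,\grad f)-|\nabla\grad f|^2
\]
together with the pointwise inequality $|\nabla\grad f|^2\ge\frac{1}{m}(\Delta f)^2$ and integration over $M$ to force $\grad f=0$ directly. Your approach trades that analytic machinery for a one-line maximum principle, but at the cost of invoking J.~H.~Chen's Theorem~\ref{th: jchen1}, which is itself a substantial result (and, as the paper notes, its proof already involves a Bochner-type formula for $\Delta|\grad f|^2$, expressions for $\Delta f^2$ and $\Delta f^4$, and a delicate inequality between $|A|^2$ and $f^2$). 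So your argument is shorter on the page but not globally more elementary; the hard work is simply relocated to the cited theorem.

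One small caution: your parenthetical dual route through a minimum of $f^2$ on a component of $\{f\ne 0\}$ is not quite parallel, since such a component need not be compact and the infimum may be $0$ on the boundary; Theorem~\ref{prop:|B|>m} would then require the bound $|A|^2\ge m$ on all of $M$, which that local argument does not give. This does not affect your main proof.
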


\noindent The proof relies, among others, on the same Bochner formula for $\Delta|\grad f|^2$, i.e.,
$$
\frac{1}{2}\Delta|\grad f|^2=\langle\grad\Delta f,\grad f\rangle-\ricci(\grad f,\grad f)-|\nabla\grad f|^2,
$$
and on the general classical inequality
$$
|\nabla\grad f|^2\geq \frac{1}{m}(\Delta f)^2.
$$

In conclusion, we can state the following result.

\begin{theorem}
Let $\phi:M^m\rightarrow\mathbb{S}^{m+1}$ be a compact proper biharmonic hypersurface. Then any of the following hypotheses implies the other two:
\begin{enumerate}

\item[(i)] $M$ is CMC;

\item[(ii)] $|A|^2$ is a constant;

\item[(iii)] the scalar curvature $s$ is a constant.

\end{enumerate}
\end{theorem}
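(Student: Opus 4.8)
The plan is to prove the four implications (i)$\Rightarrow$(ii), (i)$\Rightarrow$(iii), (ii)$\Rightarrow$(i) and (iii)$\Rightarrow$(i); since these make (i) equivalent both to (ii) and to (iii), all six implications among the three conditions follow. The statement is essentially a repackaging of three results already quoted above, so the only new computations needed are elementary.

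First I would handle (i)$\Rightarrow$(ii) and (i)$\Rightarrow$(iii). If $M$ is CMC then the mean curvature function $f$ is constant, and it is nonzero since $\phi$, being proper biharmonic, is non-minimal. Hence $\grad f=0$ and $\Delta f=0$, so the normal part of the biharmonic system for a hypersurface in $\mathbb{S}^{m+1}$, namely $\Delta f+(|A|^2-m)f=0$, collapses to $(|A|^2-m)f=0$, giving $|A|^2=m$; this is the constant value required by (ii). For (iii), the Gauss equation for a hypersurface in $\mathbb{S}^{m+1}$ gives $s=m(m-1)+(mf)^2-|A|^2=m(m-2)+m^2f^2$, which is constant because $f$ is. (This recovers the remark, made earlier, that a CMC proper biharmonic hypersurface automatically satisfies $|A|^2=m$ and has constant scalar curvature.)

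The remaining two implications are where the genuine work was already done elsewhere. For (ii)$\Rightarrow$(i): if $|A|^2$ is constant, its single value is either $\ge m$ or $\le m$, so one of Theorem~\ref{prop:|B|>m} and Theorem~\ref{th: jchen1} applies and forces $M$ to be CMC with $|A|^2=m$. For (iii)$\Rightarrow$(i): by the theorem of Maeta and Ou (\cite{MO}), a compact biharmonic hypersurface of $\mathbb{S}^{m+1}$ with constant scalar curvature is either minimal or CMC with $|A|^2=m$, and since $\phi$ is proper it is not minimal, hence CMC. I do not expect a new obstacle in this assembly: the only genuinely hard inputs are J.~H.~Chen's inequality-type argument behind Theorem~\ref{th: jchen1} and the Bochner-formula argument of Maeta and Ou, while the normal biharmonic equation, the Gauss equation, and the trivial dichotomy for a constant function are routine.
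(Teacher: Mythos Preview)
Your proposal is correct and follows exactly the assembly the paper itself uses: the paragraph preceding the theorem already records that CMC forces $|A|^2=m$ and constant scalar curvature, that $|A|^2=\cst$ implies CMC via the dichotomy handled by Theorems~\ref{prop:|B|>m} and~\ref{th: jchen1}, and that constant scalar curvature implies CMC via Maeta--Ou, after which the theorem is stated as the summarizing conclusion without a separate proof. Your only addition is spelling out the Gauss-equation computation for $s$, which is harmless and correct.
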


Using the celebrated theorem on CMC hypersurfaces in space forms obtained by K. Nomizu and B. Smyth in \cite{NS}, the next corollary follows immediately.

\begin{corollary}
Let $\phi:M^m\rightarrow\mathbb{S}^{m+1}$ be a compact proper biharmonic hypersurface with constant scalar curvature and $\riem^M \geq 0$, then $\phi(M)=\mathbb{S}^{m}(1/\sqrt{2})$, or $\phi(M)=\mathbb{S}^{m_1}(1/\sqrt{2})\times\mathbb{S}^{m_2}(1/\sqrt{2})$, where $m_1+m_2=m$ and $m_1\neq m_2$.
\end{corollary}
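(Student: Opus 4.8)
The plan is to combine the preceding theorem on compact proper biharmonic hypersurfaces in $\mathbb{S}^{m+1}$ with the classical Nomizu--Smyth theorem. First I would invoke the hypothesis that the scalar curvature $s$ is constant. By the preceding theorem, each of the three conditions (i) CMC, (ii) $|A|^2$ constant, (iii) $s$ constant implies the other two; hence $M$ is automatically CMC, and, since it is proper biharmonic, it satisfies $|A|^2 = m$ and $f \neq 0$, i.e.\ it is a non-minimal CMC hypersurface. So we are precisely in the situation of a compact CMC hypersurface in $\mathbb{S}^{m+1}$ with $\riem^M \geq 0$.

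Next I would apply the Nomizu--Smyth theorem from \cite{NS}: a compact CMC hypersurface in a space form with nonnegative sectional curvature is isoparametric, in fact it is either totally umbilical or a standard product of spheres $\mathbb{S}^{m_1}(r_1) \times \mathbb{S}^{m_2}(r_2)$ with $m_1 + m_2 = m$, $r_1^2 + r_2^2 = 1$. Since $M$ is now known to be isoparametric and proper biharmonic, Theorem~\ref{teo:isoparametric} applies directly and forces $\phi(M)$ to be an open part of $\mathbb{S}^{m}(1/\sqrt{2})$ or an open part of $\mathbb{S}^{m_1}(1/\sqrt{2}) \times \mathbb{S}^{m_2}(1/\sqrt{2})$ with $m_1 + m_2 = m$ and $m_1 \neq m_2$. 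Finally, since $M$ is compact, it is a closed submanifold, so ``open part of'' upgrades to equality: $\phi(M) = \mathbb{S}^{m}(1/\sqrt{2})$ or $\phi(M) = \mathbb{S}^{m_1}(1/\sqrt{2}) \times \mathbb{S}^{m_2}(1/\sqrt{2})$.

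The only genuine content beyond bookkeeping is making sure the hypotheses of Nomizu--Smyth are met and that the totally umbilical alternative is handled correctly: a totally umbilical CMC hypersurface of $\mathbb{S}^{m+1}$ is a round hypersphere $\mathbb{S}^m(r)$, and the biharmonic condition $|A|^2 = m$ with $f \neq 0$ pins down $r = 1/\sqrt{2}$ (equivalently $|H| = 1$), which is exactly the statement in the earlier proposition characterizing $|H| = 1$. Thus there is no obstacle of substance here; the main (very minor) point to be careful about is simply citing the right form of the Nomizu--Smyth classification and observing that compactness of $M$ removes the word ``open'' in the conclusion of Theorem~\ref{teo:isoparametric}. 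The corollary then follows.
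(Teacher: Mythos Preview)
Your proposal is correct and follows essentially the same route as the paper: use the preceding theorem to deduce that constant scalar curvature forces CMC, then apply the Nomizu--Smyth classification for compact CMC hypersurfaces with $\riem^M\geq 0$, and finally filter through the biharmonic condition to pin down the radii. The paper states this as an immediate consequence of Nomizu--Smyth without spelling out the intermediate passage through Theorem~\ref{teo:isoparametric}, but your extra step is harmless and makes the logic transparent.
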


\begin{remark} With the assumption that $M^m$ is CMC instead of constant scalar curvature, the above corollary holds locally (see \cite{BMO12}, \cite{O}).

\end{remark}

Another partial answer to Conjecture \ref{c2}, this time with additional hypothesis of an analytical type, was given in \cite{BLO}, where one proved that a proper biharmonic hypersurface in $\mathbb{S}^{m+1}$ satisfying a certain inequality has the following unique continuation property.

\begin{theorem}[\cite{BLO}]
Let $\phi:M^m\rightarrow\mathbb{S}^{m+1}$ be a proper biharmonic hypersurface. Assume that there exists a non-negative function $h$ on $M$ such that $|\grad|A|^2|\leq h|\grad f|$ on M. If $\grad f$ vanishes on a non-empty open connected subset of $M$, then $\grad f=0$ on $M$, i.e., $M$ has constant mean curvature.
\end{theorem}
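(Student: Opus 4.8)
The plan is to establish an Aronszajn-type unique continuation property, in the same spirit as the unique continuation results for biharmonic maps recalled in Section~2, the point being that this time one argues directly with the scalar mean curvature function $f$ instead of with the immersion $\phi$. The heart of the matter is to extract from the (a priori fourth order) biharmonic equation a \emph{second order} elliptic differential inequality for the vector field $X=\grad f$, of the form $|\Delta X|\le C(|X|+|\nabla X|)$ on coordinate charts with $C$ locally bounded. It is exactly here that the hypothesis $|\grad|A|^2|\le h|\grad f|$ enters: it forces the lower order terms of that inequality to be controlled by $X=\grad f$ alone, so that the inequality becomes self-contained.

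First I would use only the normal part of the biharmonic equation for a hypersurface in a space form, namely $\Delta f+(|A|^2-m)f=0$, and differentiate it, obtaining
$$
\grad(\Delta f)=-f\,\grad|A|^2-(|A|^2-m)\,\grad f .
$$
Then, combining with the Weitzenb\"ock formula $\Delta(\grad f)=\grad(\Delta f)-\ricci^M(\grad f)$ (the same one underlying the formula for $\Delta|\grad f|^2$ quoted in the excerpt), I would get
$$
\Delta(\grad f)=-f\,\grad|A|^2-(|A|^2-m)\,\grad f-\ricci^M(\grad f),
$$
where $\ricci^M$ denotes the Ricci operator of $M$, a smooth and hence locally bounded field (one does not even need its expression via the Gauss equation).

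Next I would fix a coordinate chart $U$ and write $X=\grad f=X^i\partial_i$. Using $|\grad|A|^2|\le h|\grad f|$ together with the local boundedness of $f$, $|A|^2$, $h$ and $\ricci^M$, the last identity gives $|\Delta(\grad f)|\le C_U|\grad f|$ on $U$. Expressing the rough Laplacian of $X$ through the ordinary (coordinate) Laplacian $\Delta_0$ of the component functions and absorbing into the right-hand side the Christoffel-symbol contributions, which are of order at most one in $X$ with smooth coefficients, this becomes
$$
|\Delta_0 X^i|\le C_U\Big(\sum_j|X^j|+\sum_j|\nabla X^j|\Big),\qquad i=1,\dots,m,
$$
a second order elliptic system whose principal part is the Laplacian. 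Since $X=\grad f$ vanishes on a non-empty open subset of the connected manifold $M$, Aronszajn's unique continuation theorem \cite{A} yields $X\equiv 0$ on $M$; hence $f$ is constant, i.e.\ $M$ is CMC.

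The only genuinely non-routine step is the reduction just outlined: the term $f\,\grad|A|^2$ produced by differentiating the normal biharmonic equation is, without additional information, a fourth order quantity in the immersion, and the assumption $|\grad|A|^2|\le h|\grad f|$ is precisely what keeps the resulting inequality for $\grad f$ closed. I expect this to be the main point to locate, rather than to overcome; once it is in place, the Bochner formula, the passage to components, and the appeal to Aronszajn are standard, and in particular — unlike for general biharmonic maps — there is no need to introduce the auxiliary second order system for the pair $(\phi,\tau(\phi))$. One technical caveat is that the coefficient $C_U$ requires $|f|h$ to be locally bounded, which is automatic as soon as $h$ is continuous.
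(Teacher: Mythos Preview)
Your argument is correct and follows the same overall strategy as the paper: differentiate the normal part $\Delta f+(|A|^2-m)f=0$ of the biharmonic equation, commute with the gradient via a Weitzenb\"ock-type identity, and feed the hypothesis $|\grad|A|^2|\le h|\grad f|$ into the resulting second order equation so as to obtain an Aronszajn differential inequality for $\grad f$. The one genuine difference is in how $\grad f$ is turned into a tuple of scalar functions before invoking Aronszajn. You work intrinsically: pass to a coordinate chart on $M$, use the components $X^i$ of $\grad f$ in $TM$, and absorb the Christoffel-symbol contributions into the first order right-hand side. The paper instead works extrinsically: it regards $\grad f$ as a vector field along $M$ tangent to the ambient $\mathbb{R}^{m+2}$ and identifies it with a globally defined $\mathbb{R}^{m+2}$-valued function $u$, which then satisfies $Lu=f\grad|A|^2$ for a second order linear elliptic operator $L$. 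The extrinsic packaging avoids the chart-by-chart bookkeeping and the Christoffel terms you handle by hand, at the price of picking up second fundamental form contributions in $L$; your intrinsic route, on the other hand, makes no use of the ambient Euclidean space and would transplant unchanged to hypersurfaces in an abstract space form. Either way the inequality closes in the same manner, and your observation that no order-reduction is needed here matches the paper's own comment.
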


\noindent The proof relies again on Aronszajn's result (see \cite{A}), but here the reduction of the order is not necessary anymore. The main idea is to think $\grad f$ as a vector field tangent to $\mathbb{R}^{m+2}$ along $M$ and identify it with an $\mathbb{R}^{m+2}$-valued function $u$. This function satisfies the equation $Lu=f\grad|A|^2$, where $L$ is a second order linear elliptic operator. Then, the result follows using the hypothesis and the Aronszanjn's result.

The inequality in the theorem is verified when one imposes natural geometric conditions on $|A|^2$ or the scalar curvature of $M$.

\begin{corollary}[\cite{BLO}]
Let $\phi:M^m\rightarrow\mathbb{S}^{m+1}$ be a proper biharmonic hypersurface such that $|A|^2$ is constant or $M$ has constant scalar curvature. Then, either $M$ has constant mean curvature, or the set of points where $\grad f\neq 0$ is an open dense subset of $M$.
\end{corollary}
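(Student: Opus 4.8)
The plan is to observe that each of the two geometric hypotheses forces the analytic inequality $|\grad|A|^2|\leq h|\grad f|$ required by the preceding theorem, and then to read off the density statement by a short topological argument.

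\textbf{Step 1: producing the function $h$.} If $|A|^2$ is constant, then $\grad|A|^2=0$ at every point, so the inequality holds trivially with $h\equiv 0$. If instead the scalar curvature $s$ of $M$ is constant, I would use the Gauss equation for a hypersurface in $\mathbb{S}^{m+1}$, which reads
$$
s=m(m-1)+(\trace A)^2-|A|^2=m(m-1)+m^2f^2-|A|^2,
$$
so that $|A|^2=m(m-1)+m^2f^2-s$ and hence $\grad|A|^2=2m^2 f\,\grad f$. Therefore $|\grad|A|^2|=2m^2|f|\,|\grad f|$, and one may take $h=2m^2|f|$, which is a non-negative (continuous) function on $M$; where $\grad f=0$ both sides vanish, so the inequality is valid everywhere. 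In either case the hypothesis of the theorem is met.

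\textbf{Step 2: the dichotomy.} Let $U=\{p\in M:(\grad f)(p)\neq 0\}$, an open subset of $M$. Suppose $M$ is not CMC, i.e.\ $\grad f\not\equiv 0$; I claim $U$ is dense. If it were not, the complement $M\setminus U$ (the zero set of $\grad f$) would contain a non-empty open set, hence a non-empty open \emph{connected} subset $V$, say a small coordinate ball, on which $\grad f\equiv 0$. The theorem would then force $\grad f=0$ on all of the connected manifold $M$, contradicting $\grad f\not\equiv 0$. Hence $U$ is open and dense, which is exactly the stated alternative: either $M$ has constant mean curvature, or the set where $\grad f\neq 0$ is open and dense.

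I do not anticipate a genuine obstacle here: the corollary is essentially a translation of the theorem's hypothesis into the two natural geometric settings, and the only point that needs a moment's care is the Gauss-equation bookkeeping (and the remark that the theorem only requires $h$ to be a function, not a smooth one, which is all we can guarantee in the constant-scalar-curvature case).
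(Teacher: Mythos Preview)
Your proposal is correct and matches the paper's own (implicit) argument: the paper only remarks that ``the inequality in the theorem is verified when one imposes natural geometric conditions on $|A|^2$ or the scalar curvature of $M$'' and states the corollary, leaving the reader to supply exactly the Gauss-equation computation and the density dichotomy that you have written out. Your bookkeeping in Step~1 and the topological argument in Step~2 are both fine.
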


The above corollary is a local version of the results obtained by J. H. Chen, and S. Maeta and Y.-L. Ou, respectively, with the additional hypothesis that $f$ is constant on some open set.

Still with additional hypotheses of an analytic nature (for example, using new Liouville type theorems for superharmonic functions on complete non-compact manifolds, and asking that a certain function on $M$ built by using $f$ to be of class $L^p$, or asking that $f^{-1}\in L^p(M)$, for some $p\in(0,\infty)$), the second conjecture was proved right by S. Maeta in \cite{SM} and Y. Luo and S. Maeta in \cite{LM}.

Another interesting result to prove Conjecture \ref{c1} was obtained by M.~Vieira in \cite{V} by using classical techniques from Riemannian geometry.

\begin{theorem}[\cite{V}]\label{thm:V}
Let $M^m$ be a compact proper biharmonic hypersurface in a closed hemisphere $\mathbb{S}^{m+1}$. If $1-|H|^2$ does not change sign, then $M^m$ is the small hypersphere $\mathbb{S}^m(1/\sqrt{2})$.
\end{theorem}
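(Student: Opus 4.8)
The plan is to prove that the hypotheses force $M$ to be totally umbilical in $\mathbb{S}^{m+1}$ with all principal curvatures equal to $1$; such a hypersurface is an open part of $\mathbb{S}^m(1/\sqrt{2})$, and compactness upgrades this to $M=\mathbb{S}^m(1/\sqrt{2})$. The device is to pass to the isometric immersion $\psi=i\circ\phi\colon M\to\mathbb{R}^{m+2}$ and to work with the height function $h=\langle\psi,a\rangle$, where $a\in\mathbb{R}^{m+2}$ is the unit vector for which the ambient closed hemisphere is $\{x\in\mathbb{S}^{m+1}:\langle x,a\rangle\ge 0\}$, so that $h\ge 0$ everywhere on $M$.

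First I would record the relevant $\mathbb{R}^{m+2}$-valued Laplacians. Let $\eta$ be a unit normal of $M$ in $\mathbb{S}^{m+1}$, $f$ its mean curvature function, $A$ the shape operator; recall $\bar\nabla_X\eta=-AX$ and the Codazzi identity $\Div A=m\grad f$. The standard computation gives $\Delta\psi=m\psi-mf\eta$ (the mean curvature vector of $M$ in $\mathbb{R}^{m+2}$ being $f\eta-\psi$) and $\Delta\eta=m\grad f+|A|^2\eta-mf\psi$. Now substitute both parts of the biharmonic system for a hypersurface of $\mathbb{S}^{m+1}$, namely $\Delta f+(|A|^2-m)f=0$ and $A(\grad f)=-\tfrac{m}{2}f\grad f$, into $\Delta(f\eta)=(\Delta f)\eta+f\,\Delta\eta+2A(\grad f)$; the terms collapse to the identity $\Delta(f\eta)=mf\eta-mf^2\psi$, and therefore $\Delta^2\psi=m^2(1+f^2)\psi-2m^2 f\eta$.

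Since $M$ is closed, $\int_M\Delta\psi=\int_M\Delta^2\psi=0$. The first equality gives $\int_M\psi=\int_M f\eta$, and substituting this into the second yields $\int_M(f^2-1)\psi=0$; pairing with $a$ gives $\int_M(1-|H|^2)\,h=0$. Here the hypothesis enters: since $h\ge 0$ and $1-|H|^2=1-f^2$ does not change sign, the integrand $(1-f^2)h$ does not change sign, hence $(1-f^2)h\equiv 0$ on $M$. Let $U=\{h>0\}$. It is nonempty, for otherwise $\phi(M)$ would lie in the totally geodesic equator $E=\mathbb{S}^{m+1}\cap a^{\perp}$ and $M$ would be minimal, contradicting properness; moreover the set where $\phi$ meets $E$ has empty interior, since an open subset of $M$ mapped into the regular, closed, totally geodesic submanifold $E$ would, by the unique continuation theorem for biharmonic maps stated above, force $\phi(M)\subset E$, the same contradiction. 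Thus $U$ is open and dense, $f^2\equiv 1$ on $U$, hence $f^2\equiv 1$ on $M$ by continuity and then $f\equiv 1$ or $f\equiv -1$ by connectedness. In either case $\grad f=0$, so the normal equation gives $|A|^2=m$; since also $(\trace A)^2=(mf)^2=m^2=m|A|^2$, equality in Cauchy--Schwarz forces $A=\pm I$. Hence $M$ is totally umbilical with principal curvatures $\pm 1$, i.e. an open part of $\mathbb{S}^m(1/\sqrt{2})$, and by compactness $M=\mathbb{S}^m(1/\sqrt{2})$. (Once $f^2\equiv 1$ one may instead finish by quoting the proposition that a CMC proper biharmonic submanifold with $|H|=1$ sits in $\mathbb{S}^m(1/\sqrt{2})$ as a minimal submanifold.)

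The step I expect to be the main obstacle is the collapse producing $\Delta(f\eta)=mf\eta-mf^2\psi$ (equivalently the formula for $\Delta^2\psi$): it genuinely uses the whole biharmonic system — the tangential equation being essential — together with the Codazzi equation, and it is sensitive to the sign conventions for $\Delta$ and for the shape operator; everything after that is bookkeeping plus the single appeal to unique continuation. For contrast, in the case $1-|H|^2\le 0$ one does not need the hemisphere: integrating $\tfrac12\Delta f^2=(m-|A|^2)f^2-|\grad f|^2$ and using $|A|^2\ge mf^2$ already yields $\grad f=0$ and (since $\phi$ is proper) $|A|^2\equiv m$, whence $f^2\equiv 1$; the hemisphere is precisely what closes the remaining case $1-|H|^2\ge 0$.
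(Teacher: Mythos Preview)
Your proof is correct and lands on the same key integral identity $\int_M (1-|H|^2)\,h=0$ that drives Vieira's argument; from there the conclusion is identical. The organization differs only superficially: the paper describes Vieira's route as first establishing a general formula for the bilaplacian $\Delta_M^2(\bar f|_M)$ of the restriction to $M$ of a function $\bar f$ defined on the ambient $N^{m+1}$, a formula in which the tangent and normal parts of $\tau_2(\phi)$ appear explicitly and then vanish under the biharmonic hypothesis, after which one specializes to the height function $\bar f=\langle\cdot,v\rangle$ and integrates. You instead compute $\Delta^2\psi$ for the $\mathbb{R}^{m+2}$-valued position vector directly, substituting the biharmonic system into $\Delta(f\eta)$, and only at the end pair with the fixed vector $a$; since $h=\langle\psi,a\rangle$ and the Laplacian acts componentwise, this is the same computation read in a different order. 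Your path is a bit more elementary (it bypasses the general restricted-bilaplacian formula), while Vieira's buys a reusable identity in which the bitension field is visible before any hypothesis is imposed. One point worth noting is that you make explicit the appeal to the unique continuation theorem to rule out $\{h=0\}$ having interior, a step the survey's sketch does not detail; this is indeed needed and your use of it is correct.
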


\noindent The proof of this theorem relies on a general formula that gives the expression of the bilaplacian of a function $f$ on $M$, when $f$ is the restriction of some function $\bar f$ defined on the ambient manifold $N^{m+1}$. This formula is given in terms of the covariant derivatives of $\bar f$ with respect to $N$, and the second fundamental form of $M$ in $N$. A very interesting fact about this formula is the involvement of the tangent and normal parts of the bitension field. Therefore, when $M$ is biharmonic the equation is simpler and can be used in the study of such hypersurfaces. Furthermore, one can consider a given non-zero vector $v\in\mathbb{R}^{m+2}$ which determines the hemisphere containing $M$ in Theorem \ref{thm:V}, and the function $\bar f$ on $\mathbb{S}^{m+1}$ that associates to each point the inner product between its position vector and $v$. Taking into account the properties of this particular function, the bilaplacian of its restriction to $M$ simplifies even more. Finally, integrating over $M$, one concludes.

We end this subsection with yet another two positive partial answers to Conjecture~\ref{c1}.

\begin{theorem}[\cite{JHC}]\label{thmJHC}
Let $\phi:M^m\to \mathbb{S}^{m+1}$ be a compact proper biharmonic hypersurface. If $M^m$ has non-negative sectional curvature and $m\leq 10$, then $\phi$ is CMC and $\phi(M)$ is either $\mathbb{S}^{m}(1/\sqrt 2)$, or $\mathbb{S}^{m_1}(1/\sqrt 2)\times \mathbb{S}^{m_2}(1/\sqrt 2)$, $m_1+m_2=m$, $m_1\neq m_2$.
\end{theorem}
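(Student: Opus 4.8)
The plan is to show first that $\phi$ has constant mean curvature, and then to deduce the classification from the rigidity results already quoted. Recall that for a proper biharmonic hypersurface in $\mathbb{S}^{m+1}$ the mean curvature function $f$ satisfies $\Delta f=(m-|A|^{2})f$ and $2A(\grad f)+mf\grad f=0$. On the open set $U=\{p\in M:(\grad f)(p)\neq0\}$ the second equation forces $e_{1}:=\grad f/|\grad f|$ to be a principal direction with principal curvature $\lambda_{1}=-\tfrac{m}{2}f$; writing $\lambda_{2},\dots,\lambda_{m}$ for the remaining principal curvatures one gets $\sum_{i\ge2}\lambda_{i}=\tfrac{3m}{2}f$ and $|A|^{2}=\tfrac{m^{2}}{4}f^{2}+\sum_{i\ge2}\lambda_{i}^{2}$. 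Now I would bring in non-negativity of the sectional curvature: by the Gauss equation the curvature of the plane spanned by $e_{i},e_{j}$ equals $1+\lambda_{i}\lambda_{j}$, so $1+\lambda_{i}\lambda_{j}\ge0$ for all $i\neq j$; summing the inequalities with $j=1$ gives $(m-1)-\tfrac{3m^{2}}{4}f^{2}\ge0$, i.e. $f^{2}\le\tfrac{4(m-1)}{3m^{2}}$ on $\overline{U}$, and the remaining sectional curvatures (including those of the $2$-planes not spanned by principal directions) bound $\sum_{i\ge2}\lambda_{i}^{2}$, hence $|A|^{2}$, in terms of $f^{2}$.

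Next I would run an integral argument on the compact $M$ to force $\grad f\equiv0$. Integrating $\Delta(f^{2})=2(m-|A|^{2})f^{2}-2|\grad f|^{2}$ gives $\int_{M}(|A|^{2}-m)f^{2}=-\int_{M}|\grad f|^{2}\le0$, and one also has the identities coming from $\Delta(f^{4})$ and from the Bochner formula $\tfrac12\Delta|\grad f|^{2}=\langle\grad\Delta f,\grad f\rangle-\ricci^{M}(\grad f,\grad f)-|\Hess f|^{2}$ after substituting $\Delta f=(m-|A|^{2})f$ (so that $\int_{M}\langle\grad\Delta f,\grad f\rangle=\int_{M}(\Delta f)^{2}$), using that $\ricci^{M}(\grad f,\grad f)\ge0$ follows from the curvatures of the principal planes through $\grad f$, and a lower bound for $|\Hess f|^{2}$ sharper than $\tfrac1m(\Delta f)^{2}$ coming, via the Codazzi equation, from $\grad f$ being a principal direction. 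Combining these identities with the pointwise bounds on $f^{2}$ and $|A|^{2}$ obtained above should leave an inequality $\int_{M}c_{m}\,(\text{non-negative integrand})\le0$ in which $c_{m}\ge0$ exactly when $m\le10$; this forces $\grad f\equiv0$, i.e. $\phi$ is CMC. (An alternative packaging: prove under these hypotheses that $|A|^{2}-m$ cannot change sign on $M$, and then conclude via Theorem~\ref{prop:|B|>m} or Theorem~\ref{th: jchen1}.)

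Once $\phi$ is CMC, being proper biharmonic forces $|A|^{2}=m$, and then by the Gauss equation the scalar curvature $s=m(m-1)+m^{2}f^{2}-|A|^{2}$ is constant. Since $M$ is compact with non-negative sectional curvature, the corollary to the theorem of K.~Nomizu and B.~Smyth quoted above applies and gives that $\phi(M)$ is $\mathbb{S}^{m}(1/\sqrt{2})$, or $\mathbb{S}^{m_{1}}(1/\sqrt{2})\times\mathbb{S}^{m_{2}}(1/\sqrt{2})$ with $m_{1}+m_{2}=m$, $m_{1}\neq m_{2}$.

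The hard part is the integral step: converting the Gauss and Codazzi equations, the non-negativity of the sectional curvature and the two biharmonic equations into an estimate sharp enough that the restriction $m\le10$ is exactly what makes the argument close — in particular, the naive Kato bound $|\Hess f|^{2}\ge\tfrac1m(\Delta f)^{2}$ will not suffice and one must exploit that $\grad f$ is a principal direction. The linear-algebraic step (involving only the biharmonic equations and the Gauss equation) is elementary, and the final classification is routine given the earlier results.
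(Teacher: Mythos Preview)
Your route is \emph{not} the paper's. The paper avoids the Bochner formula for $|\grad f|^{2}$ and the identities for $\Delta f^{2}$, $\Delta f^{4}$ altogether; instead it works with the Simons-type identity
\[
\tfrac{1}{2}\Delta|A|^{2}=-|\nabla A|^{2}-m\Div(A(\grad f))+m^{2}|\grad f|^{2}-\tfrac{1}{2}\sum_{i,j}(\lambda_{i}-\lambda_{j})^{2}R_{ijij},
\]
valid for any hypersurface, and feeds in J.~H.~Chen's pointwise inequality
\[
|\nabla A|^{2}\ge \frac{m^{2}(m+26)}{4(m-1)}\,|\grad f|^{2},
\]
which holds for biconservative hypersurfaces. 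These combine to
\[
\tfrac{1}{2}\Delta|A|^{2}\le \frac{3(m-10)}{m+26}\,|\nabla A|^{2}-m\Div(A(\grad f))-\tfrac{1}{2}\sum_{i,j}(\lambda_{i}-\lambda_{j})^{2}R_{ijij},
\]
so integrating over the compact $M$ with $m\le 10$ and $R_{ijij}\ge 0$ forces every $(\lambda_{i}-\lambda_{j})^{2}R_{ijij}$ to vanish; hence $M$ has at most two distinct principal curvatures and one concludes via Theorem~\ref{BMO2curv}. Note that the paper never first proves CMC, and the $m\le 10$ threshold arises transparently from the coefficient $\tfrac{3(m-10)}{m+26}$.

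What you outline is essentially Chen's \emph{original} argument (the paper explicitly says it is presenting a simpler alternative to \cite{JHC}): mixing $\Delta f^{2}$, $\Delta f^{4}$ and the Bochner formula for $|\grad f|^{2}$ with a sharp lower bound on $|\Hess f|^{2}$. That strategy does close, but you have not executed the decisive step: you assert that the combination ``should leave an inequality $\int_{M}c_{m}(\text{non-negative})\le 0$ with $c_{m}\ge 0$ iff $m\le 10$'' without producing it, and the pointwise bounds you derive from $1+\lambda_{i}\lambda_{j}\ge 0$ are not by themselves what makes the argument close --- the crucial ingredient is the improved Kato-type inequality (equivalently, the bound on $|\nabla A|^{2}$ above), which you only allude to. As written, this is a plan with the main computation missing. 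If you want to follow your route, you must actually produce the sharpened estimate on $|\Hess f|^{2}$ (or equivalently on $|\nabla A|^{2}$) coming from the Codazzi equation and the fact that $\grad f$ is principal; alternatively, switching to the paper's packaging via $\Delta|A|^{2}$ makes the role of $m\le 10$ immediate and replaces your final Nomizu--Smyth step by the two-curvature classification.
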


\begin{proof} We present a simpler proof than the original one in \cite{JHC}. By a long but straightforward computation we get a formula for the Laplacian of the squared norm of the shape operator, which holds for any hypersurface $M$,
\begin{equation}\label{wellknown}
\frac{1}{2}\Delta|A|^2=-|\nabla A|^2-m\Div(A(\grad f))+m^2|\grad f|^2-\frac{1}{2}\sum_{i,j=1}^m(\lambda_i-\lambda_j)^2R_{ijij},
\end{equation}
where $\lambda_i$ are the principal curvature functions of $M$.

J. H. Chen proved, in \cite{JHC}, the following inequality that holds for biconservative hypersurfaces in space forms
\begin{equation}\label{prewk}
|\nabla A|^2\geq\frac{m^2(m+26)}{4(m-1)}|\grad f|^2.
\end{equation}
Then, we obtain, for any biconservative hypersurface in a space form, that
\begin{equation}\label{wellknownplus}
\frac{1}{2}\Delta|A|^2\leq\frac{3(m-10)}{m+26}|\nabla A|^2-m\Div(A(\grad f))-\frac{1}{2}\sum_{i,j=1}^m(\lambda_i-\lambda_j)^2R_{ijij}.
\end{equation}

Next, integrating \eqref{wellknownplus}, we prove that $M$ has at most two distinct principal curvatures at any point and, since $M$ is proper biharmonic, from Theorem \ref{BMO2curv}, we conclude.
\end{proof}

The previous theorem deals with compact hypersurfaces, but with the additional hypothesis that the squared norm of the shape operator is constant, we can obtain its local version.

\begin{proposition}\label{p4.20} Let $\phi:M^m\rightarrow\mathbb{S}^{m+1}$ be a proper biharmonic hypersurface with non-negative sectional curvature. If the squared norm of the shape operator is constant and satisfies $|A|^2\leq m$, and $m\leq 6$, then $\phi$ is CMC and $\phi(M)$ is either an open subset of $\mathbb{S}^{m}(1/\sqrt 2)$, or an open subset of $\mathbb{S}^{m_1}(1/\sqrt 2)\times \mathbb{S}^{m_2}(1/\sqrt 2)$, $m_1+m_2=m$, $m_1\neq m_2$.
\end{proposition}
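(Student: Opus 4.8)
The plan is to reproduce the proof of Theorem~\ref{thmJHC} up to formula~\eqref{wellknown}, but to replace the integration step --- unavailable without compactness --- by the hypothesis $|A|^2=\cst$, which forces $\tfrac12\Delta|A|^2=0$ pointwise. The cost of this substitution is that the divergence term in \eqref{wellknown} can no longer be discarded by Stokes' theorem and must be evaluated explicitly; this is where the biconservativity equation and the normal part of the biharmonic system enter.

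First I would note that, since $\phi$ is proper biharmonic and hence biconservative, \eqref{eq:bicons} gives $A(\grad f)=-\tfrac{m}{2}f\grad f$, while the normal part of the biharmonic equation for a hypersurface in $\mathbb{S}^{m+1}$ reads $\Delta f=(m-|A|^2)f$. A short computation then yields
\begin{equation*}
\Div\bigl(A(\grad f)\bigr)=-\frac{m}{2}|\grad f|^2+\frac{m}{2}\bigl(m-|A|^2\bigr)f^2.
\end{equation*}
Substituting this into \eqref{wellknown}, using $\Delta|A|^2=0$ and solving for $|\nabla A|^2$, gives
\begin{equation*}
|\nabla A|^2=\frac{3m^2}{2}|\grad f|^2-\frac{m^2}{2}\bigl(m-|A|^2\bigr)f^2-\frac{1}{2}\sum_{i,j=1}^m(\lambda_i-\lambda_j)^2R_{ijij}.
\end{equation*}
Applying J.~H.~Chen's inequality~\eqref{prewk} to bound the left-hand side from below then produces
\begin{equation*}
0\le\frac{m^2(5m-32)}{4(m-1)}|\grad f|^2-\frac{m^2}{2}\bigl(m-|A|^2\bigr)f^2-\frac{1}{2}\sum_{i,j=1}^m(\lambda_i-\lambda_j)^2R_{ijij}.
\end{equation*}

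This inequality is then exploited pointwise. For $m\le 6$ one has $5m-32<0$, so the first term is non-positive; the assumption $|A|^2\le m$ makes the second term non-positive; and the non-negativity of the sectional curvature of $M$ gives $R_{ijij}\ge 0$, so the third term is non-positive as well. Hence all three terms vanish. From the first we obtain $\grad f=0$, so $\phi$ is CMC, and then, being proper biharmonic, $|A|^2=m$ (which is also what the vanishing of the second term says). From the vanishing of $\sum(\lambda_i-\lambda_j)^2R_{ijij}$, together with the Gauss equation $R_{ijij}=1+\lambda_i\lambda_j$, one sees that at each point any two principal curvatures satisfy $\lambda_i=\lambda_j$ or $\lambda_i\lambda_j=-1$; three distinct values would force $\lambda_a\lambda_b=\lambda_a\lambda_c=-1$ with $\lambda_a\ne 0$, hence $\lambda_b=\lambda_c$, a contradiction. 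So $M$ has at most two distinct principal curvatures everywhere, and Theorem~\ref{BMO2curv} identifies $\phi(M)$.

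I expect the only delicate point to be the explicit evaluation of $\Div(A(\grad f))$, a step invisible in the compact argument where this term simply integrates away. It is precisely this computation that produces the extra $\tfrac{m^2}{2}|\grad f|^2$ --- responsible for lowering the admissible dimension from $m\le 10$ in Theorem~\ref{thmJHC} to $m\le 6$ here --- and the term $\tfrac{m^2}{2}(m-|A|^2)f^2$, whose sign is exactly what the new hypothesis $|A|^2\le m$ controls. The sign conventions for $\Delta$ and the factor $-\tfrac{m}{2}$ in \eqref{eq:bicons} must be tracked with care so that the coefficients come out as stated.
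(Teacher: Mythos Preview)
Your proposal is correct and follows essentially the same route as the paper: compute $\Div(A(\grad f))$ using both parts of the biharmonic system, feed it into \eqref{wellknown}, combine with Chen's inequality \eqref{prewk}, and use $|A|^2=\cst$ together with the sign hypotheses to force every term to vanish. The only cosmetic difference is that you set $\Delta|A|^2=0$ first and then apply \eqref{prewk}, whereas the paper applies \eqref{prewk} first to obtain an inequality with $\tfrac{5m-32}{m+26}|\nabla A|^2$ and then invokes $|A|^2=\cst$; the resulting vanishing conditions are identical. Your explicit Gauss-equation argument showing that $\sum(\lambda_i-\lambda_j)^2R_{ijij}=0$ forces at most two distinct principal curvatures is actually more detailed than the paper, which simply asserts this and appeals to Theorem~\ref{BMO2curv}.
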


\begin{proof} From the tangent part of the biharmonic equation, we have
$$
A(\grad f)=-\frac{m}{2}f\grad f
$$
and, also using the normal part, $\Div(A(\grad f))$ can be easily written as
$$
\Div(A(\grad f))=\frac{m}{2}(m-|A|^2)f^2-\frac{m}{2}|\grad f|^2.
$$
Replacing in \eqref{wellknown} and using \eqref{prewk}, we get
\begin{equation*}
\frac{1}{2}\Delta|A|^2\leq\frac{5m-32}{m+26}|\nabla A|^2-\frac{m^2}{2}(m-|A|^2)f^2-\frac{1}{2}\sum_{i,j=1}^m(\lambda_i-\lambda_j)^2R_{ijij}\leq 0.
\end{equation*}

Since $|A|^2$ is a constant, it follows that $M$ has at most two distinct principal curvatures at any point, and then, again using Theorem \ref{BMO2curv}, we conclude.
\end{proof}

\begin{remark} The above result can be also seen as a local version of Theorem \ref{th: jchen1}, or of the fact that $|A|^2=\cst$ implies CMC, with additional hypotheses on the curvature and dimension.
\end{remark}

\subsection{Biconservative hypersurfaces}

The study of biconservative hypersurfaces in space forms is a rather new research direction. In one of the first papers devoted to this study, one obtained the following Simons type formula for the squared norm of $S_2$, from the more general equation given in \cite{LO} (see \cite{FLO}).

\begin{theorem}[\cite{FLO}]\label{th:1}
Let $\phi:M^m\rightarrow N^{m+1}(c)$ be a hypersurface in a space form. Then
\begin{align}\label{eq:delta}
\frac{1}{2}\Delta |S_2|^2=&4cm^4f^4-4m^3f^3(\trace A^3)-4m^2f^2|A|^2(cm-|A|^2)\\
\nonumber
&-8m^4f^2|\grad f|^2-4m^2f^2|\nabla A|^2\\
\nonumber
&+4m^2f\langle\grad s,\grad f\rangle\\
\nonumber
&-8m^2\Div(f\ricci(\grad f))-2m^2\Div\left(|A|^2\grad f^2\right)\\
\nonumber
& +\frac{m^5}{8}\Delta f^4-4cm^2(m-1)\Delta f^2-10m^2f\langle\tau_2^{\top}(\phi),\grad f\rangle\\
\nonumber
&-4m^2f^2\Div\left(\tau_2^{\top}(\phi)\right)-2\left|\tau_2^{\top}(\phi)\right|^2+4mf\langle\nabla\tau_2^{\top}(\phi),A\rangle.
\end{align}
\end{theorem}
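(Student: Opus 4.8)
The plan is to push everything back to the explicit form of the stress--bienergy tensor in codimension one. For a hypersurface we have $H=f\eta$ and $A_H=fA$, so part (i) of the Proposition on $S_2$ reads $S_2=-\tfrac{m^2}{2}f^2 I+2mf\,A$, and part (iv) gives
$$|S_2|^2=m^4f^4\Big(\tfrac m4-2\Big)+4m^2f^2|A|^2.$$
Applying $\tfrac12\Delta$ and expanding with the Leibniz rule $\Delta(\varphi\psi)=\varphi\Delta\psi+\psi\Delta\varphi-2\langle\grad\varphi,\grad\psi\rangle$ reduces the computation to knowing $\Delta f^4$, $f^2\Delta f^2$, $|A|^2\Delta f^2$, $f^2\Delta|A|^2$ and the mixed term $\langle\grad f^2,\grad|A|^2\rangle$, with explicit polynomial-in-$m$ coefficients. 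Equivalently, and this is the route I would actually follow, one starts from the general Simons-type identity for $\tfrac12\Delta|S_2|^2$ of a submanifold established in \cite{LO} and specialises it to a hypersurface; the identities listed below are precisely what that specialisation calls for.

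There are three ingredients. First, the classical Simons-type formula \eqref{wellknown} for $\tfrac12\Delta|A|^2$, valid for any hypersurface in a space form: combined with the Gauss equation $R_{ijij}=c+\lambda_i\lambda_j$ and the elementary sums $\sum_{i,j}(\lambda_i-\lambda_j)^2=2m(|A|^2-mf^2)$ and $\sum_{i,j}(\lambda_i-\lambda_j)^2\lambda_i\lambda_j=2mf\trace A^3-2|A|^4$, it yields a closed expression for $f^2\Delta|A|^2$ whose terms, after scaling by the appropriate constant, are precisely the purely algebraic terms $4cm^4f^4$, $-4m^3f^3\trace A^3$, $-4m^2f^2|A|^2(cm-|A|^2)$ and $-4m^2f^2|\nabla A|^2$ of the statement, together with terms in $f^2\Div(A(\grad f))$ and $f^2|\grad f|^2$ still to be processed. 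Second, repeated integration by parts via $\Div(\varphi X)=\varphi\Div X+\langle\grad\varphi,X\rangle$ (with $\Div\grad=-\Delta$) to turn the surviving non-divergence pieces --- those containing $\Div(A(\grad f))$, $\Delta f^2$ and $\langle\grad f^2,\grad|A|^2\rangle$ --- into genuine divergences; at this stage one rewrites the relevant combinations using the intrinsic Ricci and scalar curvature of $M$, which by the Gauss equation are $\ricci^M=(m-1)c\,I+mfA-A^2$ and $s=m(m-1)c+m^2f^2-|A|^2$ (so $\grad s=m^2\grad f^2-\grad|A|^2$), and this is what produces the divergence terms $-8m^2\Div(f\,\ricci(\grad f))$ and $-2m^2\Div(|A|^2\grad f^2)$, the term $4m^2f\langle\grad s,\grad f\rangle$, and the Laplacian terms $\tfrac{m^5}{8}\Delta f^4$ and $-4cm^2(m-1)\Delta f^2$. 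Third, the relation $(\Div S_2)^\sharp=-\tau_2(\phi)^\top$; for a hypersurface the Codazzi consequence $\Div A=m\grad f$ makes it explicit, $2mA(\grad f)+m^2f\grad f=-\tau_2(\phi)^\top$, and substituting this for all the leftover first-order combinations in $A(\grad f)$ and $f\grad f$ --- standing free, under a divergence, or paired with $\nabla A$ --- produces the last four terms $-10m^2f\langle\tau_2^\top(\phi),\grad f\rangle$, $-4m^2f^2\Div(\tau_2^\top(\phi))$, $-2|\tau_2^\top(\phi)|^2$ and $4mf\langle\nabla\tau_2^\top(\phi),A\rangle$.

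The main difficulty is purely organisational: the identity equates some fourteen terms, and the work is to route every term generated by \eqref{wellknown}, the Gauss and Codazzi equations and the Leibniz rule into exactly one slot of the right-hand side, to choose the $\tau_2^\top$-substitution so that it absorbs the entire first-order remainder, and to carry all numerical coefficients and signs correctly through a long chain of integrations by parts. There is nothing conceptually deeper than Simons' formula and the Gauss-Codazzi equations involved; the length and the exact constants are the obstacle, which is why in practice one obtains \eqref{eq:delta} by specialising the general submanifold formula of \cite{LO} rather than reproducing the whole computation from scratch.
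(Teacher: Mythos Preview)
Your proposal is correct and follows the same route the paper indicates: as stated just before Theorem~\ref{th:1}, formula~\eqref{eq:delta} is obtained in \cite{FLO} by specialising to codimension one the general Simons-type identity for $\tfrac12\Delta|S_2|^2$ established in \cite{LO}, and the ingredients you list --- the explicit hypersurface expression $S_2=-\tfrac{m^2}{2}f^2I+2mfA$, the Simons formula~\eqref{wellknown}, the Gauss--Codazzi identities $\ricci^M=(m-1)cI+mfA-A^2$, $s=m(m-1)c+m^2f^2-|A|^2$, $\Div A=m\grad f$, and the relation $2mA(\grad f)+m^2f\grad f=-\tau_2^\top(\phi)$ coming from $(\Div S_2)^\sharp=-\tau_2^\top(\phi)$ --- are exactly what that specialisation requires. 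Your diagnosis that the difficulty is purely organisational, with no idea beyond Simons' formula and Gauss--Codazzi, is accurate.
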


Theorem \ref{th:1} leads to the next two results.

\begin{theorem}[\cite{FLO}]
Let $\phi:M^m\rightarrow N^{m+1}(c)$ be a constant scalar curvature biconservative hypersurface in a space form. Then
\begin{align}\label{eq:deltaf}
\frac{3m^2}{2}\Delta f^4&=\ 4f^2\big\{cm^2f^2-mf(\trace A^3)-|A|^2(cm-|A|^2)-2m^2|\grad f|^2-|\nabla A|^2\big\}\\\nonumber &=\ 2f^2\left\{-\sum_{i,j=1}^m(\lambda_i-\lambda_j)^2R_{ijij}-4m^2|\grad f|^2-2|\nabla A|^2\right\}.
\end{align}
\end{theorem}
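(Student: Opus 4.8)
The plan is to specialize the Simons type formula \eqref{eq:delta} from Theorem \ref{th:1} to the biconservative constant scalar curvature case and then recognize the right-hand side as the claimed expression. First I would use biconservativity, which by the characterization of biconservative hypersurfaces means $\tau_2^{\top}(\phi)=0$; this immediately kills the last five terms of \eqref{eq:delta} (the ones involving $\tau_2^{\top}(\phi)$ or its covariant derivative). Next, the constant scalar curvature hypothesis makes $\grad s=0$, so the term $4m^2f\langle\grad s,\grad f\rangle$ drops. With the remaining terms I would then handle the total-divergence expressions: on a compact manifold these would integrate to zero, but since the statement is stated as a pointwise identity I expect one uses that for a biconservative hypersurface $\tau_2^{\top}(\phi)=0$ translates (via equation \eqref{eq:bicons}) into $A(\grad f)=-\tfrac{m}{2}f\grad f$, and one computes $\Div(f\ricci(\grad f))$ and $\Div(|A|^2\grad f^2)$ explicitly in terms of $f$, $|A|^2$, $\grad f$ using the Gauss equation and the Codazzi equation, so that combined with $\tfrac{m^5}{8}\Delta f^4$ and $-4cm^2(m-1)\Delta f^2$ they collapse to the left-hand side $\tfrac{3m^2}{2}\Delta f^4$.

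The second, cleaner way to see it, which I would actually carry out, is to compute $\tfrac{1}{2}\Delta|S_2|^2$ a second way using part (iv) of the Proposition on the stress-bienergy tensor: $|S_2|^2 = m^4|H|^4(\tfrac{m}{4}-2)+4m^2|A_H|^2$, where for a hypersurface $|H|^2=f^2$ and $A_H = fA$ so $|A_H|^2=f^2|A|^2$. Thus $|S_2|^2$ is an explicit polynomial in $f$ and $f^2|A|^2$, and $\tfrac{1}{2}\Delta|S_2|^2$ can be expanded using the product rule for the Laplacian in terms of $\Delta f^4$, $\Delta(f^2|A|^2)$ and gradient cross terms. Equating this expansion with the simplified \eqref{eq:delta} gives an identity, and then I would isolate $\Delta f^4$. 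The point is that on a biconservative hypersurface with constant scalar curvature, $f^2|A|^2$ is controlled: from $K=\det A + c$ type relations and the Gauss equation, $s$ constant links $|A|^2$ and $f$, and the biconservativity relation $A(\grad f)=-\tfrac m2 f\grad f$ lets one express $\Div A(\grad f)$, hence $\langle A,\Hess f\rangle$, via the Corollary of \cite{SN-Thesis} stated in the excerpt. Assembling these yields the first equality in \eqref{eq:deltaf}.

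For the second equality I would rewrite the scalar combination $cm^2f^2-mf(\trace A^3)-|A|^2(cm-|A|^2)$ using the curvature identity
$$
\sum_{i,j=1}^m(\lambda_i-\lambda_j)^2 R_{ijij} = 2mf(\trace A^3) - 2|A|^4 + 2cm\big(|A|^2 - m^2f^2\big),
$$
which comes from the Gauss equation $R_{ijij} = c + \lambda_i\lambda_j$ together with $\sum_i\lambda_i = mf$, $\sum_i\lambda_i^2=|A|^2$, $\sum_i\lambda_i^3=\trace A^3$. Substituting this in and simplifying the coefficients converts $4f^2\{cm^2f^2 - mf(\trace A^3) - |A|^2(cm-|A|^2) - 2m^2|\grad f|^2 - |\nabla A|^2\}$ into $2f^2\{-\sum_{i,j}(\lambda_i-\lambda_j)^2 R_{ijij} - 4m^2|\grad f|^2 - 2|\nabla A|^2\}$, giving the stated second form.

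The main obstacle I anticipate is the bookkeeping of the divergence terms in \eqref{eq:delta}: showing that $-8m^2\Div(f\ricci(\grad f)) - 2m^2\Div(|A|^2\grad f^2) + \tfrac{m^5}{8}\Delta f^4 - 4cm^2(m-1)\Delta f^2$ reassembles, under $A(\grad f)=-\tfrac m2 f\grad f$ and $s=\cst$, into $\tfrac{3m^2}{2}\Delta f^4$ minus the lower-order algebraic terms. This requires carefully expanding $\ricci(\grad f)$ via the Gauss equation (so $\ricci(\grad f) = (m-1)c\,\grad f + mf A(\grad f) - A^2(\grad f)$, and then using biconservativity to turn $A(\grad f)$ and $A^2(\grad f)$ into multiples of $f\grad f$ and $f^2\grad f$), and keeping track of the interplay between $\Delta f^4$, $|\grad f|^2$ and $\Div$ of the relevant vector fields. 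Once the divergence terms are correctly reduced, the rest is a routine—if lengthy—algebraic simplification.
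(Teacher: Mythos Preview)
Your approach is correct and is exactly what the paper intends: it states only that ``Theorem \ref{th:1} leads to the next two results,'' so the proof is precisely the specialization of \eqref{eq:delta} under $\tau_2^{\top}(\phi)=0$ and $\grad s=0$, together with the explicit form $|S_2|^2=\tfrac{m^4(m-8)}{4}f^4+4m^2f^2|A|^2$ and the relation $|A|^2-m^2f^2=\cst$ coming from constant scalar curvature. Your computation of $\ricci(\grad f)$ via Gauss and biconservativity is the right mechanism for collapsing the divergence terms pointwise; carrying it through gives $-8m^2\Div(f\ricci(\grad f))-4cm^2(m-1)\Delta f^2=-\tfrac{3m^4}{2}\Delta f^4$ and $-2m^2\Div(|A|^2\grad f^2)=m^4\Delta f^4+2m^2(\,m(m-1)c-s\,)\Delta f^2$, and the $\Delta f^2$ contributions cancel against those produced by $\tfrac12\Delta|S_2|^2$ on the left, leaving exactly $\tfrac{3m^2}{2}\Delta f^4$ after dividing by $m^2$.

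One small correction: in your Gauss identity for the second equality you should have
\[
\sum_{i,j}(\lambda_i-\lambda_j)^2 R_{ijij}=2mf(\trace A^3)-2|A|^4+2c\big(m|A|^2-m^2f^2\big),
\]
i.e.\ the last term is $-2cm^2f^2$, not $-2cm^3f^2$; with this fix the substitution reproduces the second line of \eqref{eq:deltaf} exactly. Also, there are four (not five) terms in \eqref{eq:delta} involving $\tau_2^{\top}(\phi)$, but this is cosmetic.
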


\begin{corollary}[\cite{FLO}]
Let $\phi:M^m\rightarrow \mathbb{S}^{m+1}$ be a biharmonic hypersurface with constant scalar curvature. Then the following system holds
\begin{align}\label{system}
\begin{cases}
\frac{3m^2}{2}\Delta f^4=&4f^2\big\{m^2f^2-mf(\trace A^3)-|A|^2(m-|A|^2)\\
&-2m^2|\grad f|^2-|\nabla A|^2\big\}\\
\quad\quad\Delta f=&f(m-|A|^2).
\end{cases}
\end{align}
\end{corollary}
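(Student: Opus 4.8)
The plan is to obtain both equations of the system as specializations of results already in hand, with essentially no new computation. First I would observe that a biharmonic hypersurface is automatically biconservative (any biharmonic map is biconservative), so a biharmonic hypersurface $\phi:M^m\to\mathbb{S}^{m+1}$ with constant scalar curvature is, in particular, a constant scalar curvature biconservative hypersurface in the space form $N^{m+1}(c)$ with $c=1$. Applying the immediately preceding theorem with $c=1$ and keeping the first of the two expressions it provides for $\tfrac{3m^2}{2}\Delta f^4$ yields exactly the first line of the system; the alternative form involving $\sum_{i,j}(\lambda_i-\lambda_j)^2 R_{ijij}$ is available too but is not what the statement records.

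For the second line I would fall back on the normal part of the biharmonic equation for hypersurfaces in a space form, namely $\Delta f+(|A|^2-mc)f=0$ (the corollary in the subsection on characterization formulas for biharmonic submanifolds). Setting $c=1$ gives $\Delta f=f(m-|A|^2)$, which is precisely the second equation. Since both equations are now established, the corollary follows.

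The only thing to watch is the bookkeeping of conventions: one must make sure that the normalization $f=\tfrac{1}{m}\trace A$, the sign of the rough/geometric Laplacian, and the normalization of the scalar curvature $s$ are the same in the preceding theorem and in the present statement. As these are fixed uniformly throughout the paper, this is routine, and I do not expect any genuine obstacle — the content of the corollary is carried entirely by the preceding theorem together with the standard splitting of the biharmonic equation. If one instead wanted a self-contained derivation, one could start from the Simons-type identity \eqref{eq:delta}, impose $\tau_2^\top(\phi)=0$ together with the normal biharmonic relation, and simplify; but the route through the preceding theorem is shorter and avoids redoing that reduction.
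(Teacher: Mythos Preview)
Your proposal is correct and follows exactly the route the paper intends: the first equation is the specialization of the immediately preceding theorem (equation \eqref{eq:deltaf}) to $c=1$, using that biharmonic implies biconservative, and the second equation is the normal part of the biharmonic equation for hypersurfaces in a space form with $c=1$. The paper does not write out a proof for this corollary, but its placement right after the biconservative theorem and the phrase ``Theorem \ref{th:1} leads to the next two results'' make clear this is precisely the intended derivation.
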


A direct application of equation \eqref{eq:deltaf} is the following rigidity theorem.

\begin{theorem}[\cite{FLO}]\label{thm:main}
Let $\phi:M^m\rightarrow N^{m+1}(c)$ be a compact biconservative hypersurface in a space form $N^{m+1}(c)$, with $c\in\{-1,0,1\}$. If $M$ is not minimal, has constant scalar curvature and $\riem^M\geq 0$, then $\phi(M)$ is either
\begin{enumerate}

\item[(i)] $\mathbb{S}^m(r)$, $r>0$, if $c\in\{-1,0\}$, i.e., $N$ is either the hyperbolic space $\mathbb{H}^{m+1}$ or the Euclidean space $\mathbb{R}^{m+1}$; or

\item[(ii)] $\mathbb{S}^m(r)$, $r\in(0,1)$, or the product $\mathbb{S}^{m_1}(r_1)\times\mathbb{S}^{m_2}(r_2)$, where $r_1^2+r_2^2=1$, $m_1+m_2=m$, and $r_1\neq\sqrt{m_1/m}$, if $c=1$, i.e., $N$ is the unit Euclidean sphere $\mathbb{S}^{m+1}$.
\end{enumerate}
\end{theorem}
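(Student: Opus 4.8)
The plan is to combine the Simons-type identity \eqref{eq:deltaf} with compactness and the curvature hypothesis in order to reduce everything to the CMC case, and then to quote the Nomizu--Smyth classification of compact CMC hypersurfaces of nonnegative curvature in space forms. First I would integrate \eqref{eq:deltaf} over $M$: since $M$ is compact and has empty boundary, $\int_M\Delta f^4\,dv=0$, and using the second form of the right-hand side this yields
$$
\int_M f^2\Big(\sum_{i,j=1}^m(\lambda_i-\lambda_j)^2R_{ijij}+4m^2|\grad f|^2+2|\nabla A|^2\Big)\,dv=0 .
$$
Working with the second expression in \eqref{eq:deltaf} is convenient because it already collects the curvature term $\sum_{i,j}(\lambda_i-\lambda_j)^2R_{ijij}$, so no further bookkeeping with $\trace A^3$ is needed.

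The hypothesis $\riem^M\geq0$ means that $R_{ijij}\geq0$ in a principal orthonormal frame, so the integrand above is the product of the nonnegative function $f^2$ with a nonnegative quantity; since its integral vanishes, it vanishes identically on $M$. Next I would restrict attention to the open set $U=\{p\in M:f(p)\neq0\}$, which is nonempty precisely because $M$ is not minimal. On $U$ the pointwise identity forces $|\grad f|^2=0$, $|\nabla A|^2=0$ and $\sum_{i,j}(\lambda_i-\lambda_j)^2R_{ijij}=0$; in particular $f$ is locally constant on $U$. A short connectedness argument then globalizes this: on any connected component $V$ of $U$, $f$ equals a nonzero constant, hence by continuity $f$ takes that same nonzero value on $\partial V$; but $\partial V\subset M\setminus U=\{f=0\}$, so $\partial V=\emptyset$, $V$ is open and closed, and $M$ connected gives $V=M$. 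Therefore $f$ is a nonzero constant on all of $M$, i.e. $M$ is CMC (and, incidentally, $\nabla A\equiv0$, so $M$ even has parallel second fundamental form).

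It then remains to identify $\phi(M)$. Since $M$ is a compact CMC hypersurface of $N^{m+1}(c)$ with $\riem^M\geq0$ and $f\neq0$, the Nomizu--Smyth theorem \cite{NS} applies and forces $\phi(M)$ to be a totally umbilical round hypersphere $\mathbb{S}^m(r)$ when $c\in\{-1,0\}$, and, when $c=1$, either a hypersphere $\mathbb{S}^m(r)$ or a standard product $\mathbb{S}^{m_1}(r_1)\times\mathbb{S}^{m_2}(r_2)$ with $r_1^2+r_2^2=1$ and $m_1+m_2=m$. Finally I would use $f\neq0$ to discard the totally geodesic great sphere ($r=1$) and the minimal Clifford hypersurface ($r_1=\sqrt{m_1/m}$), which leaves exactly the list in the statement; alternatively, having already $\nabla A=0$, one may invoke the classification of hypersurfaces with parallel second fundamental form in space forms and then throw away the noncompact and the minimal examples.

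The genuine analytic work, namely establishing \eqref{eq:deltaf}, is already carried out in the preceding theorem, so the present argument is essentially an integration followed by the vanishing of a nonnegative integrand; the points that need care are the sign-convention check ensuring that $\riem^M\geq0$ really makes each $R_{ijij}$ in \eqref{eq:deltaf} nonnegative, the continuity argument upgrading $\grad f=0$ on $\{f\neq0\}$ to $f$ being a nonzero constant on all of $M$, and quoting the precise form of the Nomizu--Smyth classification valid simultaneously for $c=-1,0,1$.
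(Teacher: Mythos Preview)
Your argument is correct and is precisely the ``direct application of equation \eqref{eq:deltaf}'' the paper has in mind: integrate the second form of \eqref{eq:deltaf} over the compact $M$, use $\riem^M\geq 0$ to make the integrand sign-definite, deduce $\grad f=0$ and $\nabla A=0$ on $\{f\neq 0\}$, extend to all of $M$ by connectedness, and finish with the Nomizu--Smyth classification \cite{NS}. The only cosmetic remark is that with the paper's sign convention $\Delta=-\trace\nabla^2$ the right-hand side of \eqref{eq:deltaf} is nonpositive rather than nonnegative, but this is immaterial once you integrate.
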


\begin{remark} In \cite{CY}, compact hypersurfaces $M^m$ in $\mathbb{S}^{m+1}$ with $\riem^M\geq 0$ and constant scalar curvature $s\geq m(m-1)$ were classified. In Theorem \ref{thm:main} there is no restriction on the value of scalar curvature.
\end{remark}

\begin{remark}\label{remNS} Rewriting Equation \eqref{eq:delta} in terms of the shape operator $A$ yields a generalization of the well-known formula for CMC hypersurfaces in \cite{NS}. Moreover, Theorem \ref{thm:main} can be viewed as an alternative to the result of K. Nomizu and B. Smyth about compact CMC hypersurfaces in space forms in \cite{NS}, with the "CMC hypothesis" replaced by a more general one about "the biconservativity and constant scalar curvature".
\end{remark}

Imposing the above hypothesis on the curvature of $M^m$ and a restriction on the dimension $m$, but nothing on the scalar curvature, we come to the same conclusion as in Theorem \ref{thm:main}.

\begin{theorem}\label{thmNS}
Let $\phi:M^m\rightarrow N^{m+1}(c)$ be a compact non-minimal biconservative hypersurface. If $\riem^M\geq 0$ and $m\leq 10$, then $\phi$ is CMC and $\phi(M)$ is one of the hypersurfaces given by Theorem \ref{thm:main}.
\end{theorem}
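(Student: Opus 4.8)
The plan is to run the argument behind Theorem~\ref{thmJHC} — a Simons-type identity, Chen's inequality, and integration — but at the end replace its appeal to Theorem~\ref{BMO2curv} (which uses biharmonicity) by an appeal to Theorem~\ref{thm:main} (which uses constant scalar curvature). After rescaling the metric of $N^{m+1}(c)$, which affects neither biconservativity, nor the sign of $\riem^M$, nor the list of model hypersurfaces, we may assume $c\in\{-1,0,1\}$. The identity~\eqref{wellknown} holds for every hypersurface and Chen's inequality~\eqref{prewk} holds for every biconservative hypersurface of a space form, so both are available for $\phi$. Integrating~\eqref{wellknown} over the compact $M$ (the terms $\int_M\Delta|A|^2$ and $\int_M\Div(A(\grad f))$ vanish) gives
\[
\int_M\bigl(|\nabla A|^2-m^2|\grad f|^2\bigr)=-\tfrac12\int_M\sum_{i,j=1}^m(\lambda_i-\lambda_j)^2R_{ijij}.
\]
Since $\tfrac{m^2(m+26)}{4(m-1)}\ge m^2$ exactly when $m\le 10$, inequality~\eqref{prewk} makes the integrand on the left pointwise nonnegative, while $\riem^M\ge0$ makes the integrand on the right pointwise nonnegative; hence both vanish identically, i.e. $|\nabla A|^2\equiv m^2|\grad f|^2$ and $\sum_{i,j}(\lambda_i-\lambda_j)^2R_{ijij}\equiv 0$.

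From the Gauss equation $R_{ijij}=c+\lambda_i\lambda_j\ge0$ and the second identity we get, at each point, $\lambda_i=\lambda_j$ or $c+\lambda_i\lambda_j=0$; a short case analysis (three distinct principal curvatures would give $c+\lambda_1\lambda_2=c+\lambda_1\lambda_3=0$, forcing $\lambda_1=0$, then $c=0$ and $\lambda_2\lambda_3=0$, a contradiction) shows $M$ has at most two distinct principal curvatures everywhere. Comparing $|\nabla A|^2\equiv m^2|\grad f|^2$ with~\eqref{prewk}: for $m<10$ one has $\tfrac{m^2(m+26)}{4(m-1)}>m^2$, so $\grad f\equiv0$, i.e. $M$ is CMC; then $|\nabla A|^2\equiv0$, the principal curvatures are constant, the scalar curvature is constant, and Theorem~\ref{thm:main} yields the conclusion. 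Thus only the endpoint $m=10$ is left, and there $|\nabla A|^2\equiv 100\,|\grad f|^2$ is precisely the \emph{equality} case of~\eqref{prewk}.

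For $m=10$ I would first analyze the open set $U=\{\grad f\neq0\}$: biconservativity~\eqref{eq:bicons} forces $\grad f$ to be a principal direction with simple principal curvature $-\tfrac m2 f$ and the other principal curvature to be $\tfrac{3m}{2(m-1)}f$ with multiplicity $m-1$, so on $U$ one has $|A|^2=\tfrac{m^2(m+8)}{4(m-1)}f^2$ and scalar curvature $m(m-1)c+\tfrac{3m^2(m-4)}{4(m-1)}f^2$. The condition $\riem^M\ge0$ forces $c\ge\tfrac{3m^2}{4(m-1)}f^2$ at points of $U$, so if $c\in\{-1,0\}$ then $f\equiv0$ on $U$, whence $U=\emptyset$ and $M$ is CMC; and in the CMC case — automatic for $c\in\{-1,0\}$ — the equality $|\nabla A|^2\equiv 100\,|\grad f|^2$ again gives $\nabla A\equiv0$, constant principal curvatures, constant scalar curvature, and Theorem~\ref{thm:main} applies. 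The remaining case, $c=1$ with $U\neq\emptyset$, is the crux: one has to show that $f$ is nonetheless constant. The available tools are the rigid eigenvalue structure on $U$ (which, by continuity together with $\nabla A\equiv0$ off $\overline U$, holds on all of $M$), the integrability of the eigendistribution of $A$ orthogonal to $\grad f$ for biconservative hypersurfaces, the equality case of Chen's inequality, and a maximum-principle argument for $f$ on the compact $M$; combining these should force $f\equiv\cst$, which contradicts $U\neq\emptyset$, so in fact $U=\emptyset$ and Theorem~\ref{thm:main} concludes.

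The main obstacle is exactly this last step. Away from $m=10$ — and, for all $m\le10$, whenever $M$ happens to be CMC — the proof closes immediately by reducing to Theorem~\ref{thm:main}; but at $m=10$ the coefficient that multiplied $\int_M|\grad f|^2$ degenerates, so $\grad f\equiv0$ is no longer for free, and excluding a non-constant mean curvature in $\mathbb{S}^{11}$ — equivalently, promoting ``at most two distinct principal curvatures'' to ``constant scalar curvature'' so as to feed Theorem~\ref{thm:main} — demands the fine structure theory of compact biconservative hypersurfaces with two distinct principal curvatures together with the equality case of Chen's inequality.
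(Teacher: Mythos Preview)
Your integration of \eqref{wellknown} together with Chen's inequality \eqref{prewk} is correct and, for $m\leq 9$, matches the paper's argument: you reach $\nabla A=0$, hence constant scalar curvature, and Theorem~\ref{thm:main} closes. The genuine gap is precisely where you locate it: the case $m=10$. Your analysis on $U=\{\grad f\neq 0\}$ asserts that $-\tfrac{m}{2}f$ is a \emph{simple} principal curvature, but biconservativity only gives that $\grad f$ is an eigenvector with eigenvalue $-\tfrac{m}{2}f$, not that this eigenvalue has multiplicity one; and even granting simplicity you stop at ``combining these should force $f\equiv\cst$'' without a mechanism, so the $c=1$ case is left open.

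The paper closes $m=10$ by a different and much shorter route: from the two equalities you already derived, namely $\sum_{i,j}(\lambda_i-\lambda_j)^2R_{ijij}=0$ and $|\nabla A|^2=100|\grad f|^2$, one obtains $\Delta f^2=0$ on $M$ by following the computation in the proof of Theorem~4.6 of \cite{FLO} (a Ricci commutation formula together with the identity $\langle T,A\rangle=-\tfrac12\sum_{i,j}(\lambda_i-\lambda_j)^2R_{ijij}$, where $T(X)=-\trace(RA)(\cdot,X,\cdot)$). Compactness then gives $f^2$ constant, so $\grad f=0$ and $\nabla A=0$, and Theorem~\ref{thm:main} concludes. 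Incidentally, your own line of attack can be repaired without the simplicity assumption: on any open subset of $U$ with exactly two distinct principal curvatures of multiplicities $k$ and $m-k$, the Gauss equation and $R_{ijij}=0$ for $\lambda_i\neq\lambda_j$ force $c=\tfrac{m^2(2+k)}{4(m-k)}f^2$, so $f^2$ is locally constant and hence $\grad f=0$, a contradiction; umbilical points in $U$ force $f=0$, leading to the same contradiction. But this has to be argued, not announced.
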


\begin{proof} As we have seen, equation \eqref{wellknownplus} holds for biconservative hypersurfaces. Then, for $m\leq 9$, the right hand side term is non-positive and then, integrating over $M$, we conclude that $\nabla A=0$. 

If $m=10$, we obtain that $M$ has at most two distinct principal curvatures at any point but, using this property, we cannot deduce that $M$ is CMC, as in the biharmonic case. Actually, when $m=10$, we get $\sum_{i,j=1}^{m}(\lambda_i-\lambda_j)^2R_{ijij}=0$ and equality in J. H. Chen's inequality \eqref{prewk}, i.e., $|\nabla A|^2=100|\grad f|^2$. From these two conditions, following the proof of Theorem $4.6$ in \cite{FLO}, we get $\Delta f^2=0$ on $M$. Since $M$ is compact, we obtain that $f$ is constant and thus $\nabla A=0$.
\end{proof}

\begin{remark} In the proof of Theorem \ref{thmNS}, we used the fact that 
$$
-\frac{1}{2}\sum_{i,j=1}^{m}(\lambda_i-\lambda_j)^2R_{ijij}=\langle T,A\rangle,
$$
where $T(X)=-\trace(RA)(\cdot,A,\cdot)$, and a Ricci commutation formula.
\end{remark}

We end this part by presenting a Bochner type formula of independent interest (see \eqref{eq:MSY}) that could be used in further studies on biconservative submanifolds. This formula is inspired by a similar one in \cite{MSY}. It involves a $4$-tensor $Q$ defined on a Riemannian manifold $M$:
$$
Q(X,Y,Z,W)=\langle Y,Z\rangle\langle X,W\rangle-\langle X,Z\rangle\langle Y,W\rangle,
$$
the map
$$
\sigma_{24}(X,Y,Z,W)=(X,W,Z,Y)
$$
which permutes the second and fourth variables, a symmetric $(1,1)$-tensor field $S$, and the $1$-form $\theta$ defined as the contraction $C((Q\circ\sigma_{24})\otimes g^{\ast},\nabla S\otimes S))$, where $g$ denotes the metric on $M$ and $g^{\ast}$ is its dual.

\begin{theorem}[\cite{FLO}]\label{p:MSY} Let $M$ be a Riemannian manifold with the curvature tensor field $R$ and consider a symmetric $(1,1)$-tensor field $S$. Then
\begin{equation}\label{eq:MSY}
\Div\theta=\langle T,S\rangle+|\Div S|^2-|\nabla S|^2+\frac{1}{2}|W|^2,
\end{equation}
where $T(X)=-\trace(RS)(\cdot,X,\cdot)$ and $W(X,Y)=(\nabla_XS)Y-(\nabla_YS)X$.
\end{theorem}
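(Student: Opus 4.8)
The plan is to prove \eqref{eq:MSY} pointwise in a geodesic orthonormal frame, by first making the one-form $\theta$ fully explicit and then splitting $\Div\theta$ into a part quadratic in $\nabla S$ and a part involving the second covariant derivatives of $S$, the latter producing the curvature term $\langle T,S\rangle$ through a Ricci commutation formula. First I would fix an arbitrary $p\in M$ and choose a local orthonormal frame $\{e_i\}$ geodesic at $p$, so that $(\nabla_{e_i}e_j)(p)=0$; this turns the computation of $\Div\theta$ and of second covariant derivatives at $p$ into pure algebra. Writing $S_{ij}=\langle Se_i,e_j\rangle$, $(\nabla_kS)_{ij}=\langle(\nabla_{e_k}S)e_i,e_j\rangle$ and $\nabla^2_{j,k}:=\nabla^2_{e_j,e_k}$, the first concrete step is to unwind the contraction defining $\theta$. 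Since a direct evaluation gives $(Q\circ\sigma_{24})(e_a,e_b,e_c,e_d)=\delta_{cd}\delta_{ab}-\delta_{ac}\delta_{bd}$, its two summands, contracted against $g^{\ast}$, $\nabla S$ and $S$, produce exactly two terms, and I expect to obtain
\[
\theta(X)=\langle\Div S,SX\rangle-\sum_i\langle(\nabla_{e_i}S)X,Se_i\rangle,
\]
equivalently $\theta(e_\ell)=\sum_i(\Div S)_iS_{i\ell}-\sum_{i,j}(\nabla_jS)_{i\ell}S_{ij}$, where $\Div S=\sum_i(\nabla_{e_i}S)e_i$: the $\delta_{cd}\delta_{ab}$ term (through the $g^{\ast}$ trace) yields the divergence contraction, while the $\delta_{ac}\delta_{bd}$ term yields the transposed contraction.

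Next I would compute $\Div\theta=\sum_\ell(\nabla_{e_\ell}\theta)(e_\ell)$ at $p$, distributing $\nabla_{e_\ell}$ over the two factors $\nabla S$ and $S$ in each summand of $\theta$. This splits $\Div\theta$ into two groups: the terms in which $\nabla_{e_\ell}$ lands on the undifferentiated factor $S$, which are quadratic in $\nabla S$, and the terms in which it lands on $\nabla S$, which are of type $\nabla^2S\cdot S$.

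For the quadratic group I expect the two summands of $\theta$ to contribute, respectively, $|\Div S|^2$ and $\sum_{i,j,k}(\nabla_kS)_{ij}(\nabla_jS)_{ik}$, so that their combination is
\[
|\Div S|^2-\sum_{i,j,k}(\nabla_kS)_{ij}(\nabla_jS)_{ik}.
\]
Here I would invoke the elementary identity $\tfrac12|W|^2=|\nabla S|^2-\sum_{i,j,k}(\nabla_kS)_{ij}(\nabla_jS)_{ik}$, obtained by expanding $|W|^2$ from $W(e_k,e_j)=(\nabla_kS)e_j-(\nabla_jS)e_k$ and using the symmetry of $S$. Substituting it rewrites the quadratic group as $|\Div S|^2-|\nabla S|^2+\tfrac12|W|^2$, which is exactly three of the four terms on the right of \eqref{eq:MSY}.

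For the second-derivative group, after relabelling and again using the symmetry of $S$ I expect the terms to assemble into $\sum_{i,j,k}\bigl((\nabla^2_{j,k}S)_{ik}-(\nabla^2_{k,j}S)_{ik}\bigr)S_{ij}$. At this point I would apply the Ricci commutation formula for the symmetric $(0,2)$-tensor $S$, namely $(\nabla^2_{X,Y}S-\nabla^2_{Y,X}S)(Z,W)=-S(R(X,Y)Z,W)-S(Z,R(X,Y)W)$, contract the two resulting curvature terms, and identify the total with $\langle T,S\rangle$, where $T(X)=-\trace(RS)(\cdot,X,\cdot)$; in a frame diagonalising $S$ this contraction equals $-\tfrac12\sum_{i,j}(\lambda_i-\lambda_j)^2R_{ijij}$, as recorded in the Remark following Theorem \ref{thmNS}. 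Adding the two groups then yields \eqref{eq:MSY}. I expect the main obstacle to be precisely this last identification: one of the two curvature terms collapses under a further trace to a Ricci-type contraction which must recombine correctly with the full-curvature term coming from the other, and the overall sign has to be matched to the paper's convention for $R$ (and to the sign in the Ricci identity) so that the coefficient comes out to be exactly $\langle T,S\rangle$ rather than its negative.
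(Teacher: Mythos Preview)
The paper does not supply a proof of this theorem: it is quoted from \cite{FLO} as a result of independent interest, with no argument given here beyond the statement and the remark that follows it. Hence there is no ``paper's own proof'' to compare against.

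That said, your proposed approach is the standard and correct one for identities of this Bochner--Simons type. Unwinding the contraction defining $\theta$ via $(Q\circ\sigma_{24})(e_a,e_b,e_c,e_d)=\delta_{cd}\delta_{ab}-\delta_{ac}\delta_{bd}$ to reach $\theta(X)=\langle\Div S,SX\rangle-\sum_i\langle(\nabla_{e_i}S)X,Se_i\rangle$, then differentiating in a geodesic frame and splitting into a first-order (quadratic in $\nabla S$) group and a second-order group is exactly what one expects. Your polarisation identity $\tfrac12|W|^2=|\nabla S|^2-\sum_{i,j,k}(\nabla_kS)_{ij}(\nabla_jS)_{ik}$ is correct under the natural convention $|W|^2=\sum_{j,k}|W(e_j,e_k)|^2$, and the Ricci commutation step produces precisely the curvature contraction $\langle T,S\rangle$, with the sign matching the paper's convention $R(U,V)W=[\nabla_U,\nabla_V]W-\nabla_{[U,V]}W$. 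Your diagnosis of the only delicate point --- the recombination of the two curvature terms and the sign bookkeeping --- is accurate; once that is done carefully the identity drops out. The connection you note with $\langle T,A\rangle=-\tfrac12\sum_{i,j}(\lambda_i-\lambda_j)^2R_{ijij}$ from the Remark after Theorem~\ref{thmNS} is also correct and is the way this formula is used elsewhere in the survey.
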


Unlike the Simons type equations, formula \eqref{eq:MSY} extends beyond Codazzi tensors as it involves the antisymmetric part of $\nabla S$.

All the results presented in this section are given for compact biconservative hypersurfaces in $N^n(c)$ and have a certain rigidity. All these hypersurfaces (with additional hypotheses) are given by Theorem \ref{thm:main} and satisfy $\nabla A=0$; in particular, they have at most two distinct principal curvatures at any point.

If we discard the hypothesis on compactness, we get many examples of biconservative hypersurfaces. This fact have been already observed in the section devoted to biconservative surfaces in $N^3(c)$. Then, in the first paper to study biconservative hypersurfaces (\cite{HV95}), Th. Hasanis and Th. Vlachos obtained all these hypersurfaces in $\mathbb{R}^4$, divided in three distinct classes.

In \cite{T2}, N. C. Turgay and A. Upadhyay, studied biconservative hypersurfaces in $N^{m+1}(c)$, obtaining their parametric equation. Then, they explicitly classified these hypersurfaces in $N^4(c)$, $c\neq 0$, that, in addition, have three distinct principal curvatures at any point.

For arbitrary dimensions and for $c=0$, one obtained classification results in \cite{T1} and in \cite{MOR}. In \cite{T1}, one proved that biconservative hypersurfaces with three distinct principal curvatures are either certain rotational hypersurfaces, or certain cylinders. Then, in \cite{MOR}, we performed a detailed qualitative study of biconservative hypersurfaces that are $SO(p+1)\times SO(q+1)$-invariant in $\mathbb{R}^{p+q+2}$, or $SO(p+1)$-invariant in $\mathbb{R}^{p+2}$. This study was done in the framework of the equivariant differential geometry, by using the profile curve associated to the hypersurface in the orbit space. None of these biconservative hypersurfaces are biharmonic.

\section{Open Problems}

Inspired by the recent result in \cite{MP}, that shows that in $\mathbb{S}^3$ there exists an entire family of compact non-CMC biconservative surfaces, we pose the following problem.

\vspace{0.5cm}

\textit{Find examples of compact non-CMC biconservative hypersurfaces in $N^{m+1}(c)$, $m>2$.}

\vspace{0.5cm}

Next, one can see that Remark \ref{remNS} suggests our second open problem.

\vspace{0.5cm}

\textit{Find all biconservative hypersurfaces having constant scalar curvature in space forms.}

\end{document}